\newtheorem{thm}{Theorem}[section]
\newtheorem{cor}[thm]{Corollary}
\newtheorem{claim}[thm]{Claim}
\newtheorem{fact}[thm]{Fact}
\newtheorem{lemma}[thm]{Lemma}
\newtheorem{prop}[thm]{Proposition}
\theoremstyle{definition}
\newtheorem{definition}[thm]{Definition}
\newtheorem{ex}[thm]{Example}
\newtheorem{question}[thm]{Question}
\newcommand{\QM}{\mathrm{QM}(\Gamma, \mathcal{G})}
\def\rquotient#1#2{%
	\makeatletter
	\raise.3ex\hbox{$#1$}/\lower.3ex\hbox{$#2$}%
	\makeatother
}	
\newcommand{\subjclass}[2][2010]{%
	\let\@oldtitle\@title%
	\gdef\@title{\@oldtitle\footnotetext{#1 \emph{Mathematics subject classification.} #2}}%
}
\newcommand{\keywords}[1]{%
	\let\@@oldtitle\@title%
	\gdef\@title{\@@oldtitle\footnotetext{\emph{Key words and phrases.} #1.}}%
}
\newcommand{\Address}{{
		\bigskip
		\small
		
		\textsc{Institut Montpellierain Alexander Grothendieck, 499-554 Rue du Truel, 34090 Montpellier, France.}\par\nopagebreak
		\textit{E-mail address}: \texttt{anthony.genevois@umontpellier.fr}
\medskip

		\textsc{Institut de Math\'ematiques de Jussieu-Paris Rive Gauche, Place Aur\'elie Nemours, 75013 Paris, France.}\par\nopagebreak
		\textit{E-mail address}: \texttt{romain.tessera@imj-prg.fr}
\medskip
		
}}
\title{A note on morphisms to wreath products}
\date{\today}
\author{Anthony Genevois and Romain Tessera}
\subjclass{Primary 20E22, 20F65. Secondary 20E36, 20F28, 20F67.}
\keywords{Wreath products, Automorphism groups, SQ-universal groups}
\begin{document}

\maketitle

\begin{abstract}
Given a morphism $\varphi : G \to A \wr B$ from a finitely presented group $G$ to a wreath product $A \wr B$, we show that, if the image of $\varphi$ is a sufficiently large subgroup, then $\mathrm{ker}(\varphi)$ contains a non-abelian free subgroup and $\varphi$ factors through an acylindrically hyperbolic quotient of $G$. As direct applications, we classify the finitely presented subgroups in $A \wr B$ up to isomorphism and we deduce that a group having a wreath product $(\text{non-trivial}) \wr (\text{infinite})$ as a quotient must be SQ-universal (extending theorems of Baumslag and Cornulier-Kar). Finally, we exploit our theorem in order to describe the structure of the automorphism groups of several families of wreath products, highlighting an interesting connection with the Kaplansky conjecture on units in group rings. 
\end{abstract}

\tableofcontents

\section{Introduction}

\noindent
The starting point of the article is the following surprising observation, which can be found in Baumslag's book \cite[Theorem~IV.7]{MR1243634}: a finitely presented group $G$ that admits a lamplighter group $\mathbb{Z}/n\mathbb{Z} \wr \mathbb{Z}$ ($n \geq 2$) as a quotient is necessarily \emph{large}, i.e.\ it contains a finite-index subgroup that surjects onto a free group of rank two. The result is surprising, at least at first glance, because, under the seemingly innocent assumption that our group $G$ is finitely presented, we deduce from the existence of a specific solvable quotient the fact that $G$ is very far from being solvable. Unfortunately, the proof given in \cite{MR1243634} does not really clarify this phenomenon. A well-known alternative argument (see for instance \cite[Lemma~3.2]{MR1958994}), which, in our opinion, is more transparent, is the following. It turns out that, even though $\mathbb{Z}/n\mathbb{Z} \wr \mathbb{Z}$ is solvable, it can be approximated by virtually free groups: by truncating the infinite presentation
$$\langle t, a_i \ (i \in \mathbb{Z}) \mid a_i^n=1, ta_it^{-1}=a_{i+1} \ (i \in \mathbb{Z}), [a_i,a_j]=1 \ (i,j \in \mathbb{Z}) \rangle$$
of $\mathbb{Z}/n\mathbb{Z} \wr \mathbb{Z}$ as
$$\langle t, a_i \ (i \in \mathbb{Z}) \mid a_i^n=1, ta_it^{-1}=a_{i+1} \ (i \in \mathbb{Z}), [a_i,a_j]=1 \ (|i-j|\leq k) \rangle$$
for some $k \geq 1$, one recognises the HNN extension
$$\left\langle t,a_0, \ldots, a_k \left| \begin{array}{c} a_i^n=1 \ (0 \leq i \leq k) \\ \left[ a_i ,a_j \right]=1 \ (0 \leq i,j \leq k) \\ ta_it^{-1}=a_{i+1} \ (0 \leq i \leq k-1) \end{array} \right. \right\rangle = \langle a_0,\ldots, a_k \rangle \underset{\langle a_0, \ldots, a_{k-1} \rangle}{\ast}$$
of $(\mathbb{Z}/n\mathbb{Z})^{k+1}$, which is virtually free non-abelian.
Then, it is not difficult to deduce from the fact that $G$ is finitely presented that a quotient map $G \twoheadrightarrow \mathbb{Z}/n \mathbb{Z} \wr \mathbb{Z}$ factors through one of these truncations (see Fact \ref{fact:Algebra}), proving that $G$ is indeed large. 

\medskip \noindent
This point of view was developed in \cite{MR2764930} for arbitrary wreath products $A \wr B : = \bigoplus_B A \rtimes B$, by observing that truncations of a well-chosen (infinite) presentation have a very specific algebraic structure: they decompose as a semidirect product between a graph product of copies of $A$ with $B$. In particular, this observation allowed the authors of \cite{MR2764930} to generalise Baumslag's observation (see Theorem~\ref{thm:CK} below). 

\medskip \noindent
In our article, we combine this point of view on wreath products together with the geometric perspective on graph products introduced by the first-named author in \cite{Qm}. As a result, we are able to deduce severe restrictions on morphisms from finitely presented groups to wreath products. The next statement, which is the main result of the article, summarises the restrictions we find:

\begin{thm}\label{thm:Main}
Let $A,B,G$ be three groups. Assume that $G$ is finitely presented. Every morphism $\varphi : G \to A \wr B$ factors through a quotient $\overline{G}$ of $G$ such that:
\begin{itemize}
	\item if $\varphi(G)$ has an infinite projection on $B$ and intersects non-trivially $\bigoplus_B A \leq A \wr B$, then $\mathrm{ker}(\varphi)$ contains a non-abelian free subgroup and $\overline{G}$ is acylindrically hyperbolic;
	\item if $\varphi(G)$ has infinite projection on $B$ and is not contained in a conjugate of $B$, then $\overline{G}$ acts on a finite-dimensional CAT(0) cube complex with unbounded orbits and with hyperplane-stabilisers virtually embedded in finite powers of $A$.
\end{itemize}
In particular, in the second item, $\overline{G}$ semisplits over a subgroup that virtually embeds in a finite power of $A$; so, if $A$ is finite, then $\overline{G}$ is multi-ended.
\end{thm}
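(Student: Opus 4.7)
The plan is to combine the approximation-by-truncation sketched in the introduction with the geometry of graph products developed in \cite{Qm}. Starting from the natural infinite presentation of $A \wr B$, I would truncate by keeping the commuting relators $[a_b, a'_{b'}] = 1$ only when $d_B(b,b') \leq k$ (with respect to some left-invariant length on $B$ coming from a generating system containing the projection of a generating set of $\varphi(G)$). The resulting truncation $W_k$ decomposes as $\Gamma_k \mathcal{G} \rtimes B$, where $\Gamma_k\mathcal{G}$ is the graph product of copies of $A$ over the graph on $B$ whose edges are the pairs at $B$-distance $\leq k$, and $B$ acts by translation on the indexing. Because $G$ is finitely presented, Fact~\ref{fact:Algebra} lifts $\varphi : G \to A \wr B$ through the quotient $W_k \twoheadrightarrow A \wr B$ for all $k$ large enough; take $\overline{G}$ to be the image of $G$ in $W_k$.

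The group $W_k$ acts on the quasi-median / CAT(0) cube complex $X_k$ attached to $\Gamma_k\mathcal{G}$ (as in \cite{Qm}), extended by the $B$-action permuting vertex groups. Hyperplane stabilisers are conjugates of direct products of copies of $A$ indexed by cliques of $\Gamma_k$, and cliques have bounded size (balls of radius $k$ in $B$ are finite), so hyperplane stabilisers virtually embed in finite powers of $A$. Under the hypothesis of the second item --- infinite $B$-projection and non-inclusion in a conjugate of $B$ --- a short argument rules out a global $\overline{G}$-fixed vertex, since a fixed vertex would pin $\overline{G}$ inside a conjugate of the $B$-factor of $W_k$, which would persist after projecting to $A \wr B$ and contradict the hypothesis. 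Hence the $\overline{G}$-action on $X_k$ has unbounded orbits, Sageev-type splitting theory produces a semisplitting over a hyperplane stabiliser, and when $A$ is finite these stabilisers are finite, so Stallings' theorem yields multi-endedness.

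For the first item, the extra hypothesis $\varphi(G) \cap \bigoplus_B A \neq 1$ lets me fix $g \in G$ with $\varphi(g) \in \bigoplus_B A$ supported at a single $B$-coordinate, together with $t \in G$ projecting to an infinite-order $\beta \in B$. Choosing integers $n_1, n_2, n_3$ with pairwise $\beta^{n_i}$-translates farther than $k$ in $B$, the conjugates $h_i := t^{n_i} g t^{-n_i}$ lie in pairwise non-adjacent vertex groups of $\Gamma_k\mathcal{G}$, so $\langle h_1, h_2, h_3\rangle$ surjects onto a free product of three subgroups of $A$ inside $\overline{G}$ and therefore contains a non-abelian free subgroup $F$. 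In $A \wr B$, on the contrary, the $\varphi(h_i)$ sit in pairwise distinct $B$-coordinates of $\bigoplus_B A$ and thus commute, making $\varphi(F)$ abelian; hence $[F,F] \leq \ker(\varphi)$ provides the required non-abelian free subgroup. The acylindrical hyperbolicity of $\overline{G}$ should then follow by identifying $t$ (or a suitable power of a similar element) as a contracting WPD loxodromic for the action of $W_k$ on a hyperbolic graph built from $X_k$ (a contact graph or crossing graph in the sense of \cite{Qm}), and invoking Osin's criterion.

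The main technical obstacle is this last WPD/contracting verification: one must control the interplay between the $B$-translation and the graph-product hyperplane combinatorics precisely enough to apply the contracting-element machinery of \cite{Qm} in the semidirect product setting, checking in particular that the centraliser of $t$ in $\overline{G}$ meets the coarse axis of $t$ in a uniformly bounded set. The ping-pong / free-subgroup construction itself, and the fixed-point / semisplitting argument for the second item, are essentially algebraic and geometric consequences of the graph-product structure once the truncation $W_k$ has been set up.
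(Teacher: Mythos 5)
Your overall scaffolding matches the paper's: truncate to $A \square_S B = \Gamma_S A \rtimes B$ via the finite presentability of $G$, take $\overline{G}=\psi(G)$, and exploit the action on the quasi-median complex of the graph product. But the heart of the first item, acylindrical hyperbolicity of $\overline{G}$, is exactly the step you leave open, and your candidate WPD element points in the wrong direction. An element $t$ whose image projects to an infinite-order element of $B$ need not exist at all: the projection of $\varphi(G)$ to $B$ is only assumed infinite and may be torsion (Burnside- or Grigorchuk-type $B$ are precisely cases the paper cares about, cf.\ Proposition~\ref{prop:Burnside}); and even when it exists, $B$ acts on the quasi-median graph by $x \mapsto bxb^{-1}$ and fixes the base vertex, so an element of the form $\alpha\beta$ has no reason to be contracting or WPD. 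In the paper the negative curvature comes from the lamp part: one shows that $\psi(G) \cap \Gamma_S A$ is not finitely generated and that its essential supports grow (properness of the $B$-action on the locally finite graph $\Gamma_S$ plus infiniteness of the projection), so by Minasyan--Osin it contains an element whose essential support has at least two vertices and is not contained in a join, i.e.\ an irreducible element; this element skewers a pair of strongly separated hyperplanes (Lemma~\ref{lem:skewer}), the triviality of oriented-edge stabilisers (Claim~\ref{claim:one}) forces the two hyperplane stabilisers to meet in a finite subgroup, and Proposition~\ref{prop:AcylHyp} gives acylindrical hyperbolicity, the virtually cyclic alternative being excluded by the non-finitely-generated subgroup. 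Nothing in your sketch supplies or replaces this chain.

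There are two further gaps. In the ping-pong you assume an element of $\varphi(G) \cap \bigoplus_B A$ supported on a single coordinate and an infinite-order $\beta$ in the projection: neither is available in general (you only have a non-trivial, finitely supported element and an infinite, possibly torsion, projection), and the lift $\psi(g)$ need not lie in a single vertex group even when $\varphi(g)$ is a single lamp. The construction can be repaired using only finiteness of supports and infiniteness of the projection (choose finitely many projections $b_1,b_2,b_3$ greedily so that the translated supports are pairwise disjoint and $S$-separated), and then $[F,F] \le \ker(\varphi)$ does follow; this would in fact be a more direct route to the free kernel than the paper's, which argues by contradiction inside the acylindrically hyperbolic image via a generalised loxodromic in $\varphi(H) \cap \bigoplus_B A$ and a $\mathbb{Z}^2$ in its centraliser. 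But as written the step fails. For the second item, ``no global fixed vertex'' is not equivalent to ``unbounded orbits'': bounded orbits only yield an invariant prism $g\langle \Lambda \rangle$, whose stabiliser is $B$ when $\Lambda$ is empty and $\langle \Lambda \rangle \cdot \mathrm{stab}_B(\Lambda)$ with $\mathrm{stab}_B(\Lambda)$ finite otherwise; the latter case gives finite projection on $B$ and is excluded by the infinite-projection hypothesis, which your argument never uses. Finally, defining the truncation by $d_B \le k$ tacitly assumes a finite generating set of $B$, which is not part of the hypotheses; the paper's choice of an arbitrary finite subset $S \subset B$ and $\Gamma_S = \mathrm{Cayl}(B,S)$ avoids this and is what makes links and cliques finite, hence hyperplane stabilisers virtually embeddable in finite powers of $A$.
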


\noindent
Recall that a finitely generated group $A$ \emph{semisplits} over a subgroup $B$ if the Schreier graphs of $A$ with respect to $B$ (constructed from any finite generating set of $A$) is multi-ended (which amounts to saying that the relative number of ends $e(A,B)$ is at least two). The connection between semisplittings and actions on CAT(0) cube complexes is explained in \cite{MR1347406}. In particular, groups acting on CAT(0) cube complexes semisplit over stabilisers of \emph{essential} hyperplanes. 

\medskip \noindent
Loosely speaking, this theorem states that either $\varphi : G \to A \wr B$ is uninteresting, in the sense that it essentially comes from a morphism to $B$ or a power of $A$; or it has a large kernel and it factors through a negatively curved quotient of $G$ that acts non-trivially on a finite-dimensional CAT(0) cube complex. 

\medskip \noindent
Theorem~\ref{thm:Main} might be difficult to digest, so let us illustrate a few specific aspects of it.

\paragraph{Nearly injective morphisms.} As a direct consequence of Theorem \ref{thm:Main}, a morphism $\varphi : G \to A \wr B$ from a finitely presented group $G$ to a wreath product $A \wr B$ that has a \emph{small} kernel (i.e.\ that does not contain a non-abelian free subgroup) must be quite specific: either the image $\varphi(G)$ of $G$ projects isomorphically into $B$, or it lies in $\bigoplus_B A \rtimes  F \leq A \wr B$ for some finite subgroup $F \leq B$. Applying this observation to injective morphisms allows us to classify the finitely presented subgroups in $A \wr B$.

\begin{cor}\label{cor:FPSub}
Let $A,B,G$ be three groups. Assume that $G$ is finitely presented and $B$ infinite. Then $G$ is isomorphic to a subgroup of $A \wr B$ if and only if $G$ is isomorphic to a subgroup of $B$ or if there exists an $n \geq 1$ such that $G$ is a (subgroup in $A^n$)-by-(finite subgroup in $B$). 
\end{cor}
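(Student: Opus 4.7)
The plan is to apply Theorem \ref{thm:Main} to the inclusion $\varphi : G \hookrightarrow A \wr B$ and perform a short case analysis; the reverse implication is handled by a direct construction using the Kaluzhnin--Krasner embedding.

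For the ``if'' direction, I would observe that a subgroup of $B$ clearly embeds in $A \wr B$ via the canonical splitting $B \hookrightarrow A \wr B$. For an extension $1 \to K \to G \to F \to 1$ with $K \leq A^n$ and $F$ a finite subgroup of $B$, the Kaluzhnin--Krasner theorem embeds $G$ into $K \wr F \leq A^n \wr F$. Since $B$ is infinite, it admits $n$ pairwise disjoint left cosets of $F$, say $Fb_1, \ldots, Fb_n$; identifying $\bigoplus_{i=1}^n A^{Fb_i}$ with $(A^n)^F$, one obtains an $F$-equivariant embedding $A^n \wr F \hookrightarrow A \wr B$. Composing yields $G \hookrightarrow A \wr B$.

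For the ``only if'' direction, apply Theorem \ref{thm:Main} to $\varphi$ and split according to the behaviour of the projection $\pi : A \wr B \twoheadrightarrow B$ and of the intersection with the base $\bigoplus_B A$.

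\begin{itemize}
\item If $\pi(\varphi(G))$ is infinite and $\varphi(G) \cap \bigoplus_B A \neq \{1\}$, then the first item of Theorem \ref{thm:Main} forces $\ker(\varphi)$ to contain a non-abelian free subgroup, contradicting the injectivity of $\varphi$. This case is therefore impossible.
\item If $\pi(\varphi(G))$ is infinite but $\varphi(G) \cap \bigoplus_B A = \{1\}$, then $\pi$ is injective on $\varphi(G) \cong G$, so $G$ embeds in $B$.
\item If $F := \pi(\varphi(G))$ is finite, then $\varphi(G) \leq \bigoplus_B A \rtimes F$, and $K := \varphi(G) \cap \bigoplus_B A$ is a finite-index normal subgroup of $\varphi(G)$ with quotient a subgroup of $F$. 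Since $G$ is finitely generated, so is $K$; being a finitely generated subgroup of $\bigoplus_B A$, it has support in some finite $S \subseteq B$, and hence sits inside $A^{|S|}$. Thus $G$ is (subgroup of $A^{|S|}$)-by-(subgroup of $F$), and the latter quotient is a finite subgroup of $B$.
\end{itemize}

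The only real technical input is Theorem \ref{thm:Main}, which takes care of the genuinely hard case (infinite projection together with a non-trivial base intersection) by producing a free subgroup in $\ker(\varphi)$. The rest is routine: it amounts to the standard finitary observation that a finitely generated subgroup of $\bigoplus_B A$ is contained in a finite subproduct, plus an elementary check that the three cases above are exhaustive. I do not foresee any further obstacle.
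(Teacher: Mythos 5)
Your proof is correct and follows essentially the same route as the paper: the first item of Theorem~\ref{thm:Main} rules out the case of infinite projection with non-trivial base intersection for an injective $\varphi$, the remaining cases are handled by the same finite-support argument (the paper argues on the generators of $\varphi(G)$ directly rather than passing to the finite-index subgroup $K$, an immaterial difference), and the converse uses exactly the paper's combination of the Kaluzhnin--Krasner embedding with the identification of $n$ disjoint $F$-orbits in $B$ to embed $A^n \wr F$ into $A \wr B$.
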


\noindent
In other words, the finitely presented subgroups of a wreath product $A \wr B$ essentially comes from the finitely presented subgroups of $B$ and the powers of $A$. The wreath product structure does not create new finitely presented subgroups.

\begin{proof}[Proof of Corollary \ref{cor:FPSub}.]
Assume that $G$ is isomorphic to a subgroup of $A \wr B$. By applying Theorem \ref{thm:Main} to an injective morphism $\varphi : G \hookrightarrow A \wr B$, it follows that either $G$ is isomorphic to a finitely presented subgroup of $B$ or it embeds in $\bigoplus_B A \rtimes F$ for some finite subgroup $F \leq B$. In the latter case, because $G$ is finitely generated, only finitely many copies of $A$ contains non-trivial coordinates of elements in the image of $G$, so $G$ must be a (subgroup in $A^n$)-by-(finite subgroup in $B$) for some $n \geq 1$. Conversely, it is clear that finitely presented subgroups of $B$ are finitely presented subgroups of $A \wr B$. According to \cite{MR49892}, a group $G$ that is a (subgroup in $A^n$)-by-(finite subgroup in $B$) embeds into $A^n \wr F$ for some finite subgroup $F \leq B$. But $A^n \wr F$ embeds into $A \wr B$. Indeed, because $B$ is infinite, we can fix some $X \subset B$ which is the union of $n$ pairwise distinct orbits under the action $F \curvearrowright B$ by left multiplications. Then the subgroup $\bigoplus_X A \rtimes F \leq \bigoplus_BA \rtimes B$ is isomorphic to $A^n \wr F$. Therefore, $G$ embeds into $A \wr B$, as desired.
\end{proof}

\noindent
For instance, for all $n,m \geq 1$, the finitely presented subgroups of $\mathbb{Z}^n \wr \mathbb{Z}^m$ are free abelian; and the finitely presented subgroups of $\mathbb{Z}/n\mathbb{Z} \wr \mathbb{Z}^m$ are free abelian or sums of cyclic groups whose orders divide $n$.

\paragraph{Nearly surjective morphisms.} In the opposite direction, if a finitely presented group $G$ admits a morphism $\varphi : G \to A \wr B$ to a wreath product $A \wr B$ that has a ``sufficiently large'' image (i.e.\ such that $\varphi(G)$ has a non-trivial intersection with $\bigoplus_B A$ and an infinite projection on $B$) then Theorem \ref{thm:Main} implies that $G$ must have an acylindrically hyperbolic quotient \cite{OsinAcyl}. As a consequence, $G$ must be \emph{SQ-universal} (i.e.\ every countable group is isomorphic to a subgroup in a quotient of $G$). Thus, one obtains the following statement, in the same spirit as Baumslag's observation:

\begin{cor}\label{cor:SQ}
Let $A,B$ be two groups and $G$ a finitely presented groups. Assume that $A$ is non-trivial and that $B$ is infinite. If $G$ admits $A \wr B$ as a quotient, then it must be SQ-universal.
\end{cor}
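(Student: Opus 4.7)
The plan is a direct application of Theorem~\ref{thm:Main}, combined with Osin's result that every acylindrically hyperbolic group is SQ-universal \cite{OsinAcyl}, plus the general fact that a group admitting an SQ-universal quotient is itself SQ-universal.

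First, I would check that the hypotheses of the first item of Theorem~\ref{thm:Main} are met by the surjection $\varphi : G \twoheadrightarrow A \wr B$. Since $\varphi$ is surjective, $\varphi(G) = A \wr B$; its projection on $B$ equals $B$, which is infinite by assumption; and its intersection with $\bigoplus_B A$ equals $\bigoplus_B A$, which is non-trivial since $A$ itself is non-trivial and $B$ is non-empty. Thus both conditions of the first bullet are satisfied, so Theorem~\ref{thm:Main} yields a quotient $\overline{G}$ of $G$ through which $\varphi$ factors and which is acylindrically hyperbolic.

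Next, I would invoke Osin's theorem \cite{OsinAcyl}, according to which every acylindrically hyperbolic group is SQ-universal. Hence $\overline{G}$ is SQ-universal.

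Finally, I would conclude by the elementary observation that SQ-universality passes from a quotient to the original group: if every countable group embeds in a quotient of $\overline{G}$, then, since any quotient of $\overline{G}$ is also a quotient of $G$ (pre-composing the quotient map $\overline{G}\twoheadrightarrow H$ with the map $G\twoheadrightarrow\overline{G}$), every countable group embeds into a quotient of $G$. There is really no obstacle here beyond verifying the hypotheses of Theorem~\ref{thm:Main}; the argument is a formal combination of the main theorem with a classical consequence of acylindrical hyperbolicity.
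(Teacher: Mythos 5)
Your argument is correct and is essentially the paper's own proof: verify that a surjection onto $A \wr B$ satisfies the hypotheses of the first item of Theorem~\ref{thm:Main}, deduce that $G$ has an acylindrically hyperbolic quotient, and use SQ-universality of acylindrically hyperbolic groups (the paper cites \cite[Theorem~2.33]{DGO} rather than \cite{OsinAcyl}, an immaterial difference) together with the fact that SQ-universality passes from a quotient back to the group.
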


\begin{proof}
The corollary follows from Theorem \ref{thm:Main} and from the fact that acylindrically hyperbolic groups are SQ-universal \cite[Theorem 2.33]{DGO}.
\end{proof}

\noindent
Observe that, compared to Baumslag's result, we weakened the conclusion: instead of being large, our group is only SQ-universal. In fact, deducing that the group has to be large can be done for a vast class of wreath products:

\begin{thm}[\cite{MR2764930}]\label{thm:CK}
Let $A$ be a group admitting a non-trivial finite quotient and $B$ an infinite residually finite group. Every finitely presented group admitting $A \wr B$ as a quotient must be large.
\end{thm}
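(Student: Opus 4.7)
My strategy, which parallels the argument of \cite{MR2764930} sketched in the introduction, is to combine the truncation construction with the residual finiteness of $B$.

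I would first reduce to the case where $A$ is a non-trivial finite group $\bar{A}$ by composing $\varphi$ with a surjection $A \twoheadrightarrow \bar{A}$. Then, following the lamplighter argument sketched in the introduction, I would consider the infinite presentation of $\bar{A} \wr B$ whose relations include all pairwise commutations $[\bar{A}_b, \bar{A}_{b'}] = 1$ together with the shift relations, and truncate it by retaining only the commutations for $b' b^{-1}$ lying in some fixed finite symmetric subset $S \ni 1$ of $B$. The resulting truncation decomposes as $T_S = A(\Gamma_S) \rtimes B$, where $A(\Gamma_S)$ is the graph product of copies of $\bar{A}$ indexed by $B$ with adjacency given by $S$, and $B$ acts by shift. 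Finite presentability of $G$, via Fact \ref{fact:Algebra}, will then force $\varphi$ to factor through $T_S$ for some finite $S$.

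Next, I would invoke residual finiteness of $B$ to pick a finite-index normal subgroup $B_0 \trianglelefteq B$ satisfying $B_0 \cap (S \cdot S^{-1}) = \{1\}$, and whose index $[B : B_0]$ can be taken arbitrarily large. The quotient $\pi : B \twoheadrightarrow B' := B/B_0$ is then injective on $S$ and induces a surjection $T_S \twoheadrightarrow T' := A(\Gamma') \rtimes B'$, where $\Gamma'$ is the finite graph on $B'$ with edges $\{b_1', b_2'\}$ for $b_2' b_1'^{-1} \in \pi(S)$. Each vertex of $\Gamma'$ has degree at most $|S|$, so a greedy argument produces an independent subset $I \subset B'$ of any desired size $m$, provided $[B : B_0]$ is large enough; I would choose $m \geq 2$ when $|\bar{A}| \geq 3$, and $m \geq 3$ when $|\bar{A}| = 2$.

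Killing the factors $\bar{A}_{b'}$ for $b' \notin I$ then defines a retraction $A(\Gamma') \twoheadrightarrow \ast_{b' \in I} \bar{A}$ onto a free product of $m$ copies of the finite group $\bar{A}$. A standard Euler-characteristic computation shows that this free product is virtually non-abelian free---the choice of $m$ excluding the only offending case $\mathbb{Z}/2 \ast \mathbb{Z}/2$---hence large. Since $A(\Gamma')$ has finite index in $T'$, the latter is also large; and since largeness passes from a quotient to its domain, the chain $G \twoheadrightarrow T_S \twoheadrightarrow T'$ forces $G$ itself to be large. The main obstacle will be establishing the factorization step (Fact \ref{fact:Algebra}) that exploits finite presentability of $G$; once this is in place, the combinatorial argument built on the graph-product structure of $T_S$ together with residual finiteness of $B$ is routine.
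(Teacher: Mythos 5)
First, note that the paper does not actually prove Theorem~\ref{thm:CK}: it is quoted from \cite{MR2764930}, so your proposal can only be measured against that strategy and the sketch in the introduction. Your overall architecture (reduce to a finite quotient $\bar{A}$ of $A$, truncate to $T_S=\Gamma_S\bar{A}\rtimes B$ via Fact~\ref{fact:Algebra}/Lemma~\ref{lem:Approx}, use residual finiteness of $B$ to pass to a finite quotient graph, retract a graph product onto a free product of finite groups, avoid $\mathbb{Z}/2\ast\mathbb{Z}/2$) is indeed in the right spirit. But there is a genuine gap at the very last step: Fact~\ref{fact:Algebra} only provides a homomorphism $\psi:G\to T_S$ with $\varphi=\pi_S\circ\psi$; it does not make $\psi$ surjective, and in general it is not (the chosen lifts $w_i$ of the generators need not generate $T_S$, whose kernel onto $A\wr B$ is enormous). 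Your conclusion ``the chain $G\twoheadrightarrow T_S\twoheadrightarrow T'$ forces $G$ to be large'' therefore does not follow: largeness of $T_S$ or $T'$ pulls back along surjections, not along arbitrary homomorphisms, and all you have is $G\twoheadrightarrow\psi(G)\leq T_S$. Worse, the quotients you use afterwards ($T_S\to T'$ and the retraction $\Gamma'\bar{A}\to\ast_{I}\bar{A}$) do \emph{not} factor through $\pi_S$, so the only information you have about $\psi(G)$ --- namely $\pi_S(\psi(G))=A\wr B$ --- gives no control whatsoever on its image in $T'$ or in $\ast_I\bar{A}$; that image could a priori be very small, so largeness of these quotient groups says nothing about $G$.

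The reason this issue is invisible in the introduction's $\mathbb{Z}/n\mathbb{Z}\wr\mathbb{Z}$ sketch is that there the truncation is virtually free, a property inherited by \emph{all} subgroups; the image $\psi(G)$, being virtually free and surjecting onto an infinite non-virtually-cyclic group, is then large, and largeness pulls back along $G\twoheadrightarrow\psi(G)$. For general $A$ and $B$ the truncation $\Gamma_S\bar{A}\rtimes B$ is far from virtually free, so no such shortcut is available: one must prove largeness of the image subgroup $\psi(G)$ itself, exploiting that $\pi_S(\psi(G))=A\wr B$ forces $\psi(G)$ to project onto all of $B$ and to meet $\Gamma_S\bar{A}$ in a subgroup with unbounded essential support (compare Claim~\ref{claim:three} in the proof of Theorem~\ref{thm:Main}, which is exactly the kind of analysis of $\psi(G)$ that your proposal skips), and only then convert this into finite-index subgroups with free quotients using the finite quotient of $A$ and the residual finiteness of $B$. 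This is the content of the argument in \cite{MR2764930}; as written, your proof establishes largeness of the auxiliary groups $T_S$, $T'$, $\ast_I\bar{A}$, but not of $G$.
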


\noindent
Nevertheless, one easily sees that it is not true that a finitely presented group admitting a wreath product $(\text{non-trivial})\wr(\text{infinite})$ as a quotient is automatically large, as shown by the next examples.

\begin{ex}\label{ex:Simple}
If $A,B$ are two finitely presented groups with no proper finite-index subgroup, then the free product $A \ast B$ defines a finitely presented group that admits the wreath product $A \wr B$ as a quotient but that is not large. Indeed, every finite-index subgroup of $A \ast B$ has to contain both $A$ and $B$, and so cannot be proper since $A$ and $B$ generate $A \ast B$. In other words, $A \ast B$ also has no proper finite-index subgroup. On the other hand, every morphism $A \ast B \to \mathbb{F}_2$ must contain $A$ and $B$ in its kernel, so its image must be trivial. In conclusion, $A \ast B$ is far from being large.

\medskip \noindent
As a consequence, the free product $\mathbb{Z}/2\mathbb{Z} \ast A$, which admits $\mathbb{Z}/2\mathbb{Z} \wr A$ as a quotient, cannot be large either. Indeed, the kernel of the projection $\mathbb{Z}/2\mathbb{Z} \ast A \twoheadrightarrow \mathbb{Z}/2\mathbb{Z}$ is $A \ast zAz^{-1}$, where $z$ denotes the non-trivial element of the factor $\mathbb{Z}/2\mathbb{Z}$. But our previous observation shows that $A \ast zAz^{-1}$ cannot be large, which implies that $\mathbb{Z}/2\mathbb{Z} \ast A$ is not large, as desired.

\medskip \noindent
One the simplest finitely presented group with no proper finite-index subgroup is the so-called Higman group $H:= \langle x_i \ (i \in \mathbb{Z}/4\mathbb{Z}) \mid x_{i+1}x_ix_{i+1}^{-1}=x_i^2 \ (i \in \mathbb{Z}/4\mathbb{Z}) \rangle$ \cite{MR38348}. Other possible examples come from infinite finitely presented simple groups, such as Thompson's groups $T$ and $V$ \cite{MR1426438}. 
\end{ex}

\noindent
Example~\ref{ex:Simple} justifies that the property of being large has been replaced with the property of being SQ-universal. 

\medskip \noindent
As a by-product, observe that Corollary~\ref{cor:SQ}, combined with Pichot's observation from \cite{MR3786300}, can be used in order to show that some (free) Burnside groups are not finitely presented. Even though this is known for the Burnside groups for which we know that they are infinite (see for instance the proof of \cite[Theorem~7.7]{MR3211906}), up to our knowledge there is no proof that they cannot be finitely presented as soon as they are infinite. 

\begin{prop}\label{prop:Burnside}
Let $k,m,n \geq 1$ be three integers. If the Burnside group $B(m,n)$ is infinite, then $B(m+1,kn)$ is not finitely presented. 
\end{prop}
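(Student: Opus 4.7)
The plan is to reduce the proposition to a bounded-exponent obstruction via Corollary~\ref{cor:SQ}. More precisely, I aim to exhibit a wreath product quotient of $B(m+1,kn)$ of the form $(\text{non-trivial})\wr(\text{infinite})$; Corollary~\ref{cor:SQ} will then force $B(m+1,kn)$ to be SQ-universal under the assumption that it is finitely presented, which is impossible since every quotient of $B(m+1,kn)$ has exponent dividing $kn$ and hence cannot contain a copy of $\mathbb{Z}$. I will concentrate on $k \geq 2$; the case $k=1$ follows from the references discussed before the statement, since $B(m+1,n)$ surjects onto the infinite group $B(m,n)$, so \cite[Theorem~7.7]{MR3211906} applies to $B(m+1,n)$ directly.

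The main construction is a surjection $\varphi : B(m+1,kn) \twoheadrightarrow W$ with $W := (\mathbb{Z}/k\mathbb{Z}) \wr B(m,n)$, inspired by Pichot's observation \cite{MR3786300}. Writing $y_1, \ldots, y_{m+1}$ for the canonical generators of $B(m+1,kn)$ and $t_1, \ldots, t_m$ for those of $B(m,n)$, I would set $\varphi(y_i) := (0, t_i)$ for $i \leq m$ and $\varphi(y_{m+1}) := (\delta_e, 1)$, where $\delta_e \in \bigoplus_{B(m,n)} \mathbb{Z}/k\mathbb{Z}$ denotes the indicator supported at the neutral element with value $1$. Surjectivity is immediate: the $\varphi(y_i)$ for $i \leq m$ generate the subgroup $\{0\} \times B(m,n)$, and conjugating $(\delta_e, 1)$ by $(0, g)$ yields $(\delta_g, 1)$, so that the image also generates the full base group.

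To see that $\varphi$ is well-defined it suffices to verify that $W$ has exponent dividing $kn$. Using the standard wreath product identity $(f,g)^n = \bigl( \sum_{i=0}^{n-1} g^i \cdot f, \, g^n \bigr)$ together with $g^n = 1$ in $B(m,n)$, one obtains $(f,g)^n = \bigl( \sum_{i=0}^{n-1} g^i \cdot f, \, 1 \bigr)$; raising to the $k$-th power annihilates the remaining lamp coordinate since $\mathbb{Z}/k\mathbb{Z}$ has exponent $k$. The hypothesis that $B(m,n)$ is infinite together with $k \geq 2$ guarantees that $W$ is a wreath product $(\text{non-trivial})\wr(\text{infinite})$, completing the setup for Corollary~\ref{cor:SQ}.

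I expect the main conceptual point to be the recognition that this lamplighter-style assignment packages a wreath product quotient of $B(m+1,kn)$ with precisely the right exponent; the exponent computation itself is routine, and the final SQ-universality step reduces to a one-line contradiction via bounded exponent.
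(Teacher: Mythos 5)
Your argument for $k \geq 2$ is essentially the paper's proof: the authors simply observe that $\mathbb{Z}/k\mathbb{Z} \wr B(m,n)$ is generated by $m+1$ elements and has exponent dividing $kn$, hence is a quotient of $B(m+1,kn)$, and then invoke Corollary~\ref{cor:SQ}; your explicit assignment of generators, the surjectivity check, the exponent computation, and the bounded-exponent contradiction with SQ-universality are exactly the details behind that one-line argument.

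One caveat concerns $k=1$. You assert that \cite[Theorem~7.7]{MR3211906} ``applies directly'' because $B(m+1,n)$ surjects onto the infinite group $B(m,n)$, but that reference only treats the Burnside groups whose infiniteness is established by the specific machinery used there; the paper itself stresses that non-finite-presentability is \emph{not} known to follow from infiniteness alone, which is precisely the motivation for the proposition, so your citation does not cover the conditional statement for $k=1$. To be fair, the paper's own proof does not isolate $k=1$ either: Corollary~\ref{cor:SQ} requires the lamp group $A$ to be non-trivial, so the displayed argument really only works for $k\geq 2$ (and similarly for the alternative route via Corollary~\ref{cor:FW}). So this is a shared blind spot at the edge case rather than a defect specific to your approach; for the main case your proof matches the paper's.
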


\noindent
Recall that $B(m,n)$ is defined as $\langle x_1, \ldots, x_m \mid w(x_1, \ldots, x_m)^n=1 \text{ for every word } w \rangle$.

\begin{proof}[Proof of Proposition~\ref{prop:Burnside}.]
The wreath product $\mathbb{Z}/k\mathbb{Z} \wr B(m,n)$ admits a generating set of cardinality $m+1$ and all its elements have exponent $kn$. Therefore, there exists a surjective morphism $B(m+1,kn) \twoheadrightarrow \mathbb{Z}/k\mathbb{Z} \wr B(m,n)$. If $B(m,n)$ is infinite, then it follows from Corollary \ref{cor:SQ} that $B(m+1,kn)$ cannot be finitely presented. (Observe that the latter assertion also follows from Corollary~\ref{cor:FW} below.)
\end{proof}

\paragraph{Fixed-point property.} Finally, it follows from Theorem~\ref{thm:Main} that finitely presented groups satisfying strong fixed-point properties do not have many morphisms to wreath products. More precisely:

\begin{cor}\label{cor:FW}
Let $A,B,G$ be three groups and $\varphi : G \to A \wr B$ a morphism. If $G$ is finitely presented and satisfies the property $(\mathrm{FW}_\mathrm{fin})$, then $\varphi(G)$ has a finite projection on $B$ or it lies in a conjugate of $B$. 
\end{cor}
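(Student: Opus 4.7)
My plan is to derive Corollary~\ref{cor:FW} as a direct application of the second item of Theorem~\ref{thm:Main}, arguing by contradiction. Assume the conclusion fails, so that $\varphi(G)$ has infinite projection on $B$ and is not contained in any conjugate of $B$. Then the hypotheses of the second bullet of Theorem~\ref{thm:Main} are met, and the theorem produces a quotient $\overline{G}$ of $G$, through which $\varphi$ factors, together with a cellular action of $\overline{G}$ on a finite-dimensional CAT(0) cube complex $X$ having unbounded orbits. The additional information that hyperplane-stabilisers virtually embed in finite powers of $A$ will not be needed for this particular corollary.

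Next I would invoke that every fixed-point property of type $(\mathrm{FW}_\ast)$ passes to quotients: the action $\overline{G} \curvearrowright X$ pulls back along the quotient map $G \twoheadrightarrow \overline{G}$ to a cellular action $G \curvearrowright X$ whose orbits coincide with those of $\overline{G}$. Reading $(\mathrm{FW}_\mathrm{fin})$ as the requirement that every cellular action of the group on a finite-dimensional CAT(0) cube complex has bounded orbits (equivalently, a global fixed point, since $X$ is finite-dimensional and CAT(0)), the hypothesis on $G$ therefore forces the lifted action to have bounded orbits, and hence forces $\overline{G}$ to have bounded orbits on $X$ as well. This contradicts the conclusion of Theorem~\ref{thm:Main}, and proves the corollary.

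The only point I expect to require care is making sure that the version of $(\mathrm{FW}_\mathrm{fin})$ used in the statement is indeed the version incompatible with the geometric output of Theorem~\ref{thm:Main}. If $(\mathrm{FW}_\mathrm{fin})$ is taken to be the fixed-point property on all finite-dimensional CAT(0) cube complexes, the plan above applies verbatim. If instead $(\mathrm{FW}_\mathrm{fin})$ encodes a finiteness condition on cell or hyperplane stabilisers, one would additionally have to check that the stabilisers furnished by Theorem~\ref{thm:Main} (virtually embedding into finite powers of $A$) fall within the scope of that condition; this, however, is a book-keeping matter rather than a genuine obstacle, so the corollary should in any case reduce to a short invocation of the main theorem.
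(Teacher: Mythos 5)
Your argument is correct and is essentially the route the paper intends: Corollary~\ref{cor:FW} is exactly the contrapositive of the second item of Theorem~\ref{thm:Main}, using that an action of the quotient $\overline{G}$ pulls back to an action of $G$ with the same orbits, which is ruled out by $(\mathrm{FW}_\mathrm{fin})$ as the paper defines it (no action on a finite-dimensional CAT(0) cube complex with unbounded orbits). Your cautionary remark is moot since the paper's definition matches the version your plan needs, so the proof goes through verbatim.
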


\noindent
We say that a group satisfies the property $(\mathrm{FW}_\mathrm{fin})$ if it cannot act on a finite-dimensional CAT(0) cube complex with unbounded orbits, or, equivalently, without a global fixed point. Because Kazhdan's property (T) can be formulated as a fixed-point property on (complete) median spaces \cite{medianviewpoint} and that median graphs, a discrete counterpart of median spaces, define canonically the same objects as CAT(0) cube complexes \cite{mediangraphs, Roller}, one can think of the property $(\mathrm{FW}_\mathrm{fin})$ as being a discrete and finite-dimensional version of Kazhdan's property~(T). Examples include groups satisfying (T), such as $\mathrm{SL}(n,\mathbb{Z})$ for $n \geq 3$ and uniform lattices in quaternionic hyperbolic spaces, but also finitely generated torsion groups \cite{MR1347406} and Thompson's groups $T,V$ \cite{MR3918481, MR4028980}.

\paragraph{Automorphisms.} However, our main application of Theorem~\ref{thm:Main} deals with automorphisms of wreath products. It seems that automorphism groups of (finitely generated) wreath products have not been extensively studied in the literature. Here, we observe that, if $B$ from our wreath product $A \wr B$ is finitely presented and one-ended, then Theorem~\ref{thm:Main} immediately implies that every automorphism of $A \wr B$ sends $B$ to one of its conjugates. We deduce from this rigidity the following structure result:

\begin{cor}\label{cor:IntroAuto}
Let $A$ be a finite cyclic group and $B$ a one-ended finitely presented group. Then 
$$\mathrm{Aut}(A \wr B)=\left( \bigoplus_B A \oplus A[B]^\times \right)\rtimes \mathrm{Aut}(B).$$
\end{cor}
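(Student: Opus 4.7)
Fix $\varphi \in \mathrm{Aut}(A \wr B)$. The plan is to apply Theorem~\ref{thm:Main} to the injective morphism $\varphi|_B : B \to A \wr B$, which pulls back to a morphism from the finitely presented one-ended group $B$. Since $\varphi(B) \cong B$ is finitely generated and infinite while $\bigoplus_B A$ is locally finite, $\varphi(B)$ has an infinite projection onto $B$. The first alternative of Theorem~\ref{thm:Main} is excluded by injectivity; the second would force the quotient $\overline{B}$ (which equals $B$ by injectivity) to be multi-ended, since $A$ is finite, contradicting one-endedness. The only remaining possibility is that $\varphi(B)$ is contained in a conjugate $gBg^{-1}$ of $B$. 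Composing $\varphi$ with $\mathrm{inn}_{g^{-1}}$ and applying the same argument to $\varphi^{-1}$, together with the fact that $B$ is self-normalising in $A \wr B$ (a consequence of the shift action of $B$ on finitely supported functions having no non-zero fixed point), one deduces $\varphi(B) = B$.

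The automorphism $\varphi$ then induces $\psi := \varphi|_B \in \mathrm{Aut}(B)$. Writing $\varphi(f, e) = (\beta(f), \gamma(f))$ for $f \in \bigoplus_B A$, the map $\gamma : \bigoplus_B A \to B$ is a group homomorphism whose image is abelian and locally finite, and an analysis of the commutation relations imposed by the normality of $\varphi(\bigoplus_B A)$ in $A \wr B$ forces $\gamma$ to vanish. Thus $\varphi$ also preserves $\bigoplus_B A$, and the restriction $\beta$ is $\psi$-equivariant: $\beta(bf) = \psi(b) \beta(f)$. Such $\psi$-equivariant automorphisms of $\bigoplus_B A \cong A[B]$ are in canonical bijection with $A[B]^\times$: precomposing with the ring automorphism $\alpha_\psi$ of $A[B]$ induced by $\psi$ reduces to $B$-equivariant automorphisms of the left module $A[B]$, which are given by right multiplication by a unit $u \in A[B]^\times$ (such an automorphism being entirely determined by the image of $1 \in A[B]$, which must be a unit). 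This identifies the stabiliser of $B$ in $\mathrm{Aut}(A \wr B)$ with $A[B]^\times \rtimes \mathrm{Aut}(B)$.

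To conclude, every automorphism of $A \wr B$ decomposes uniquely as $\mathrm{inn}_{f_0} \circ \varphi'$ with $f_0 \in \bigoplus_B A$ and $\varphi' \in \mathrm{Stab}(B)$, which yields the announced decomposition once one identifies the normal piece $\bigoplus_B A \oplus A[B]^\times$ with the semidirect product $\bigoplus_B A \rtimes A[B]^\times$ arising from the intertwining relation $m_u \circ \mathrm{inn}_{f_0} \circ m_u^{-1} = \mathrm{inn}_{f_0 u}$. The main technical obstacles are the vanishing of $\gamma$ in the presence of torsion in $B$ (where one cannot simply invoke that $\bigoplus_B A$ is torsion and $B$ torsion-free, so one has to exploit the abelianness of the normal closure of $\gamma(e_e)$ in $B$) and the verification that $\bigoplus_B A$ is characteristic in $A \wr B$, needed so that the subgroup $\{\mathrm{inn}_{f_0} : f_0 \in \bigoplus_B A\}$ is normal in $\mathrm{Aut}(A \wr B)$.
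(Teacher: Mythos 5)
Your overall route is the paper's: restrict $\varphi$ to $B$, apply Theorem~\ref{thm:Main} to rule out the two ``large'' alternatives and get $\varphi(B)$ inside a conjugate of $B$, upgrade to $\varphi(B)=B$ (you use the self-normalisation computation of Fact~\ref{fact:SelfNormalising} where the paper invokes almost malnormality -- a cosmetic difference), then reduce to $\psi$-equivariant automorphisms of the base and identify them with units as in Lemma~\ref{lem:Units}. However, two steps are not actually carried out.

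First, the claim that $\gamma$ vanishes (equivalently, that $\varphi$ preserves $\bigoplus_B A$) is exactly the assertion that the base group is characteristic, which the paper does not reprove but imports from \cite[Theorem~9.12]{MR188280}. Your sketched mechanism -- the image of $\gamma$ is an abelian, locally finite normal subgroup of $B$, plus ``abelianness of the normal closure of $\gamma(e_e)$'' -- cannot suffice: a one-ended finitely presented group may perfectly well contain nontrivial finite abelian normal subgroups (take $B=\mathbb{Z}/2\mathbb{Z}\times\mathbb{Z}^2$), so no contradiction arises from these properties alone. Moreover the genuinely delicate configuration, $A$ of even order and involutions in the putative image, is precisely the one responsible for the classical failure of the base group to be characteristic in $\mathbb{Z}/2\mathbb{Z}\wr\mathbb{Z}/2\mathbb{Z}$; any correct argument must therefore use the infinitude of $B$ in an essential way, and yours does not. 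You also need this same fact a second time (normality of $\{\mathrm{inn}_{f_0}\}$ in $\mathrm{Aut}(A\wr B)$), and you list it as an ``obstacle'' rather than prove it. Either prove it or cite it, as the paper does.

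Second, the final ``identification'' of $\bigoplus_B A\rtimes A[B]^\times$ with $\bigoplus_B A\oplus A[B]^\times$ is not legitimate. Your intertwining relation $\overline{u}\circ\mathrm{inn}_{f_0}\circ\overline{u}^{\,-1}=\mathrm{inn}_{f_0u}$ is correct, and it says the conjugation action of $A[B]^\times$ on $\iota(\bigoplus_B A)\cong A[B]$ is by right multiplication, which is nontrivial whenever $u\neq 1$; a nontrivial semidirect product is not a direct sum, and in general the two groups are not even abstractly isomorphic (for $A=\mathbb{Z}/2\mathbb{Z}$, $B=\mathbb{Z}^2$ the units are the trivial ones, and the semidirect product with the right-multiplication action is $\mathbb{Z}/2\mathbb{Z}\wr\mathbb{Z}^2$, which is non-abelian, unlike the direct sum). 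Note that the paper's own proof asserts instead that $\iota(\bigoplus_B A)$ and $A[B]^\times$ commute, which is incompatible with your relation; a direct check on elements $(0,p)$ confirms your relation, so this point cannot be waved through by an ``identification'' -- as written your argument establishes (modulo the first gap) the decomposition with the normal factor $\bigoplus_B A\rtimes A[B]^\times$, not the displayed one, and you should either exhibit why the action would be trivial (it is not) or flag the discrepancy with the stated formula.
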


\noindent
Here, $\bigoplus_B A$ is identified with its image under the canonical morphism $A \wr B \to \mathrm{Inn}(A \wr B)$ (which is injective here) to the subgroup of inner automorphisms, $\mathrm{Aut}(B)$ is the subgroup of $\mathrm{Aut}(A \wr B)$ corresponding the automorphisms of $B$ that we naturally extend to $A \wr B$, and $A[B]^\times$ corresponds to a subgroup of $\mathrm{Aut}(A \wr B)$ isomorphic to the units of the group ring $A[B]$. We refer to Section~\ref{section:Auto} for more details. 


\medskip \noindent
Interestingly, Corollary~\ref{cor:IntroAuto} exhibits a connection between automorphism groups of wreath products and units in group rings. Understanding the structure of group rings, or even units in group rings, is notably difficult. As an illutration, recall that Kaplansky's unit conjecture - which claims that the units in the group ring $F[G]$ of a torsion-free group $G$ over a field $F$ are all of the form $k\cdot g$ for some $k \in K^\ast$, $g \in G$ - has been disproved only recently \cite{Kaplansky}. By extension, fully understanding the structure of the automorphism groups of specific wreath products might be quite difficult.

\medskip \noindent
As an immediate consequence of Corollary~\ref{cor:IntroAuto}:

\begin{cor}
Let $A$ be a finite cyclic group and $B$ a one-ended finitely presented group. If $\mathrm{Aut}(B)$ is finitely generated and if $B$ satisfies the Kaplansky unit conjecture over $A$, then $\mathrm{Out}(A \wr B)$ is finitely generated. 
\end{cor}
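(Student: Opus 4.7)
The plan is to leverage Corollary~\ref{cor:IntroAuto} to identify $\mathrm{Out}(A \wr B)$ as a quotient of a manifestly finitely generated group. First I would write
\[
\mathrm{Aut}(A \wr B) = \left( \bigoplus_B A \oplus A[B]^\times \right)\rtimes \mathrm{Aut}(B),
\]
and observe that the $\bigoplus_B A$ factor sits inside $\mathrm{Inn}(A \wr B)$, via the canonical injection of the base group of $A \wr B$ into its inner automorphisms (this is precisely the identification in the statement of that corollary). Moreover, $\bigoplus_B A$ is normal in $\mathrm{Aut}(A \wr B)$: it is a direct summand of the normal subgroup $N := \bigoplus_B A \oplus A[B]^\times$, and it is preserved by the $\mathrm{Aut}(B)$-action, which permutes its coordinates. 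Passing to the quotient modulo $\mathrm{Inn}(A \wr B)$ therefore kills $\bigoplus_B A$, so that $\mathrm{Out}(A \wr B)$ is a quotient of $A[B]^\times \rtimes \mathrm{Aut}(B)$.

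Next I would invoke the Kaplansky unit conjecture for $B$ over $A$: every unit of $A[B]$ is of the form $a \cdot b$ with $a \in A^\times$ and $b \in B$. Since $A$ is commutative, $A^\times$ is central in $A[B]^\times$, which yields the group isomorphism $A[B]^\times \cong A^\times \times B$. Now $A^\times$ is finite (as $A$ is), and $B$ is finitely generated (being finitely presented), so $A[B]^\times$ is finitely generated.

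Combining this with the assumption that $\mathrm{Aut}(B)$ is finitely generated, the semidirect product $A[B]^\times \rtimes \mathrm{Aut}(B)$ is generated by a finite set, and hence so is its quotient $\mathrm{Out}(A \wr B)$. I do not expect any serious obstacle: the argument is essentially formal once Corollary~\ref{cor:IntroAuto} is in hand. The only mildly delicate step is the first one—correctly locating $\bigoplus_B A$ as a normal subgroup contained in $\mathrm{Inn}(A \wr B)$—but this is precisely how the decomposition in Corollary~\ref{cor:IntroAuto} is set up.
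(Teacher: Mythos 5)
Your argument is correct and is exactly the ``immediate consequence'' the paper has in mind: quotient the decomposition of Corollary~\ref{cor:IntroAuto} by $\mathrm{Inn}(A \wr B)$ (which absorbs $\bigoplus_B A$), use the Kaplansky hypothesis to identify $A[B]^\times$ with the finitely generated group $A^\times \times B$, and conclude that $\mathrm{Out}(A \wr B)$ is a quotient of the finitely generated group $A[B]^\times \rtimes \mathrm{Aut}(B)$. No gaps; this matches the paper's intended reasoning.
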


\noindent
Here, we say that \emph{$B$ satisfies the Kaplansky unit conjecture over $A$} if every unit in $A[B]$ is trivial, i.e. of the form $a \cdot b$ for some $a \in A^\times$, $b \in B$. For instance, when $A$ is a field, it is known that $B$ satisfies the Kaplansky unit conjecture over $A$ if it satisfies the so-called \emph{unique product property}, which includes in particular orderable \cite{MR798076} and locally indicable groups \cite{MR310046}. 

\medskip \noindent
Section~\ref{section:Auto} contains a discussion about automorphism groups of wreath products, including a generalisation of Corollary~\ref{cor:IntroAuto} (see Proposition~\ref{prop:AutFreeProduct}) and some hints suggesting that the structure of automorphism groups of arbitrary wreath products can be quite tricky.

\paragraph{Acknowledgments.} We are grateful to R\'emi Coulon and Simon Andr\'e for instructive discussions related to Propositions~\ref{prop:Burnside} and~\ref{prop:Logic}; and to Yves Cornulier for his comments that allow us to improve the presentation of the article. We also thank Francesco Fournier-Facio for relevant comments regarding Section~\ref{section:Auto}, and the anonymous referee for their comments which help us to clarify the exposition of the article.

\section{A few words about graph products of groups}\label{section:GP}

\noindent
As shown in the next section, wreath products turn out to be tightly related to \emph{graph products of groups}. This section is dedicated to basic definitions and preliminary statements related to these products. 

\medskip \noindent
Given a simplicial graph $\Gamma$ and a collection of groups $\mathcal{G}= \{G_u \mid u \in V(\Gamma) \}$ indexed by the vertices of $\Gamma$, the \emph{graph product} $\Gamma \mathcal{G}$ is defined by relative presentation
$$\langle G_u \ (u \in V(\Gamma)) \mid [G_u,G_v]=1 \ (\{u,v\} \in E(\Gamma)) \rangle$$
where $E(\Gamma)$ denotes the edge-set of $\Gamma$ and where $[G_u,G_v]=1$ is a shortcut for $[x,y]=1$ for all $x \in G_u$, $y \in G_v$. The groups in $\mathcal{G}$ are referred to as \emph{vertex-groups}. If all the vertex-groups are isomorphic to a single group $A$, we also denote $\Gamma \mathcal{G}$ by $\Gamma A$.

\paragraph{Normal form.}\index{Normal form of graph products} Fix a simplicial graph $\Gamma$ and a collection of groups $\mathcal{G}$ indexed by $V(\Gamma)$. A \emph{word} in $\Gamma \mathcal{G}$ is a product $g_1 \cdots g_n$ where $n \geq 0$ and where each $g_i$ belongs to a vertex-group; the $g_i$'s are the \emph{syllables} of the word, and $n$ is the \emph{length} of the word. Clearly, the following operations on a word do not modify the element of $\Gamma \mathcal{G}$ it represents:
\begin{itemize}
	\item[(O1)] delete the syllable $g_i=1$;
	\item[(O2)] if $g_i,g_{i+1} \in G$ for some $G \in \mathcal{G}$, replace the two syllables $g_i$ and $g_{i+1}$ by the single syllable $g_ig_{i+1} \in G$;
	\item[(O3)] if $g_i$ and $g_{i+1}$ belong to two adjacent vertex-groups, switch them.
\end{itemize}
A word is \emph{graphically reduced} if its length cannot be shortened by applying these elementary moves. Given a word $g_1 \cdots g_n$ and some $1\leq i<n$, if the vertex-group associated to $g_i$ is adjacent to each of the vertex-groups of $g_{i+1}, \ldots, g_n$, then the words $g_1 \cdots g_n$ and $g_1\cdots g_{i-1} \cdot g_{i+1} \cdots g_n \cdot g_i$ represent the same element of $\Gamma \mathcal{G}$; we say that $g_i$ \textit{shuffles to the right}. Analogously, one can define the notion of a syllable shuffling to the left. If $g=g_1 \cdots g_n$ is a graphically reduced word and $h$ a syllable, then a graphical reduction of the product $gh$ is given by
\begin{itemize}
	\item $g_1 \cdots g_n$ if $h=1$;
	\item $g_1 \cdots g_{i-1} \cdot g_{i+1} \cdots g_n$ if $h \neq 1$, $g_i$ shuffles to the right and $g_i=h^{-1}$;
	\item $g_1 \cdots g_{i-1} \cdot g_{i+1} \cdots g_n \cdot (g_ih)$ if $h \neq 1$, $g_i$ and $h$ belong to the same vertex-group, $g_i$ shuffles to the right, and $g_i \neq h^{-1}$, where $(g_ih)$ is thought of as a single syllable;
	\item $g_1 \cdots g_nh$ otherwise. 
\end{itemize}
As a consequence, every element $g$ of $\Gamma \mathcal{G}$ can be represented by a graphically reduced word, and this word is unique up to applying the operation (O3). Often, elements of $\Gamma \mathcal{G}$ and graphically reduced words are identified. 
We refer to \cite{GreenGP} for more details (see also \cite{GPvanKampen} for a more geometric approach). 

\medskip \noindent
In graph products, there is also a notion of cyclic reduction. A word $g_1 \cdots g_n$ is \emph{graphically cyclically reduced} if it is graphically reduced and if there do not exist two indices $1 \leq i < j \leq n$ such that $g_i$ and $g_j$ belong to the same vertex-group, $g_i$ shuffles to the left, and $g_j$ shuffles to the right. Notice that, as proved in \cite{GreenGP}, every element $g \in \Gamma \mathcal{G}$ can be written as a graphically reduced product $xyx^{-1}$ where $y$ is graphically cyclically reduced; moreover, $y$ is unique up to cyclic permutations of its syllables. The subgraph of $\Gamma$ generated by the vertices corresponding to the vertex-groups used to write the syllables of $y$ is the \emph{essential support} of $g$, denoted by $\mathrm{supp}(g)$. In other words, the essential support of an element $g$ is the smallest induced subgraph $\Lambda \subset \Gamma$ such that $g$ belongs to a conjugate of the subgroup generated by the vertex-groups indexing the vertices of $\Lambda$.

\begin{figure}
\begin{center}
\includegraphics[scale=0.4]{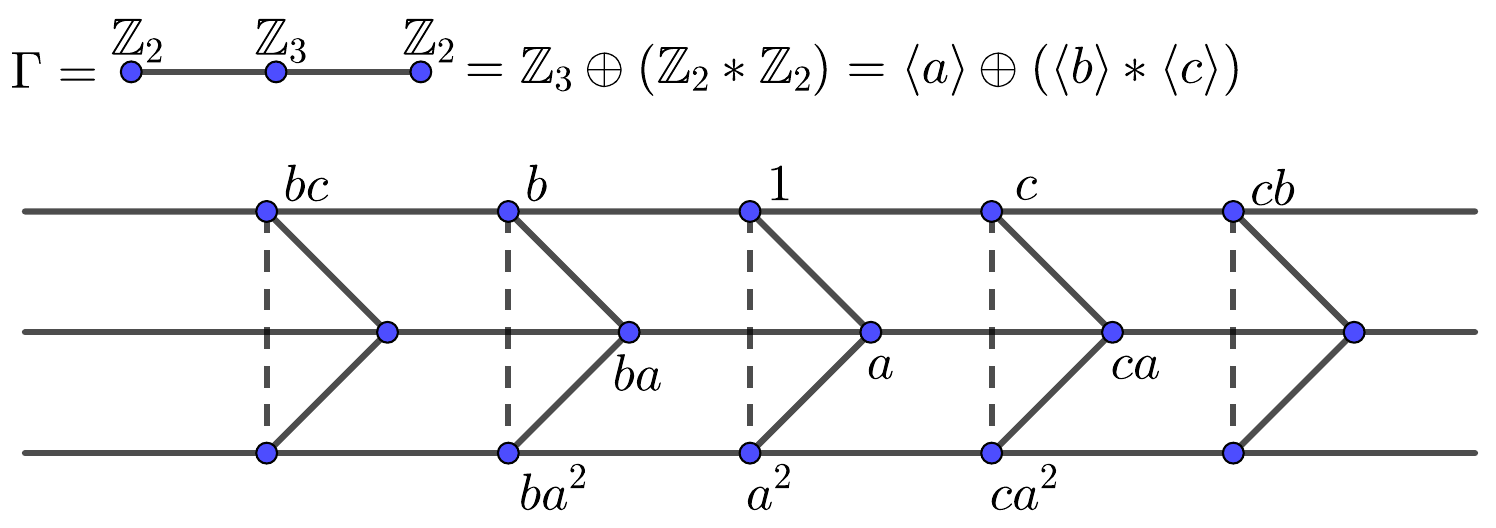}
\caption{Example of a graph $\QM$.}
\label{Cayl}
\end{center}
\end{figure}

\paragraph{Quasi-median geometry.} Let $\Gamma$ be a simplicial graph and $\mathcal{G}$ a collection of groups labelled by $V(\Gamma)$. The geometry of the graph product $\Gamma \mathcal{G}$ turns out to be encoded in the following Cayley graph:
$$\QM : = \mathrm{Cayl} \left( \Gamma \mathcal{G}, \bigcup\limits_{u \in V(\Gamma)} G_u \backslash \{1 \} \right),$$
i.e.\ the graph whose vertices are the elements of the group $\Gamma \mathcal{G}$ and whose edges link two distinct vertices $x,y \in \Gamma \mathcal{G}$ if $y^{-1}x$ is a non-trivial element of some vertex-group. Like in any Cayley graph, edges of $\QM$ are labelled by generators, namely by elements of vertex-groups. By extension, paths in $\QM$ are naturally labelled by words of generators. In particular, geodesics in $\QM$ correspond to words of minimal length. More precisely:

\begin{lemma}\label{lem:geodesicsQM}
Let $\Gamma$ be a simplicial graph and $\mathcal{G}$ a collection of groups indexed by $V(\Gamma)$. 
Fix two vertices $g,h \in \QM$. If $s_1 \cdots s_n$ is a graphically reduced word representing $g^{-1}h$, then 
$$g, \ gs_1, \ gs_1s_2, \ldots, gs_1 \cdots s_{n-1}, \ gs_1 \cdots s_{n-1}s_n = h$$
defines a geodesic in $\QM$ from $g$ to $h$. Conversely, if $s_1, \ldots, s_n$ is the sequence of elements of $\Gamma \mathcal{G}$ labelling the edges of a geodesic in $\QM$ from $g$ to $h$, then $s_1 \cdots s_n$ is a graphically reduced word representing $g^{-1}h$. 
\end{lemma}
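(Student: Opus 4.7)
The plan is to reduce the lemma to the bookkeeping statement that $d_{\QM}(g,h)$ equals the common syllable-length of any graphically reduced word representing $g^{-1}h$. That common length is well-defined: by the normal form recalled just above the lemma, any two graphically reduced words for the same element differ only by applications of (O3), which preserve the number of syllables. Call this integer $|g^{-1}h|$.

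The easy direction is to exhibit a path of length $n := |g^{-1}h|$. If $s_1 \cdots s_n$ is any graphically reduced word for $g^{-1}h$, then each $s_i$ is a non-trivial element of some vertex-group, so it is a generator of $\QM$; hence $g, gs_1, gs_1s_2, \ldots, gs_1 \cdots s_n = h$ is a path of length $n$. This gives $d_{\QM}(g,h) \leq n$ and simultaneously proves the first assertion of the lemma.

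For the converse inequality, consider any path of length $m$ in $\QM$ from $g$ to $h$, labelled by a sequence of non-trivial syllables $s_1, \ldots, s_m$ with $s_1 \cdots s_m = g^{-1}h$. It suffices to show that if $s_1 \cdots s_m$ is not graphically reduced, the path is not a geodesic. So suppose it is not graphically reduced; then by definition there is an index $i$ such that the syllable $s_i$ shuffles to the right past $s_{i+1}, \ldots, s_{j-1}$ (via (O3)) until it sits next to some $s_j$ lying in the same vertex-group. Applying (O2) replaces the two adjacent syllables $s_i, s_j$ by the single syllable $s_is_j$, so the resulting word has length $m-1$; if in addition $s_is_j = 1$, one further application of (O1) drops the length to $m-2$. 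In either case we have produced a word for $g^{-1}h$ strictly shorter than $m$, and hence a strictly shorter path from $g$ to $h$ in $\QM$, contradicting the assumption that our path was a geodesic.

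Combining the two directions, every geodesic from $g$ to $h$ has length $|g^{-1}h|$ and its labelling word is graphically reduced, which is exactly the second assertion. No substantive obstacle is expected here: the entire proof is an unpacking of the normal form theorem of \cite{GreenGP} together with the observation that edges of $\QM$ correspond to syllables; the only minor care needed is to check that the shuffle-and-combine step strictly reduces length regardless of whether the combined syllable happens to be trivial.
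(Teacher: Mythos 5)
Your argument is correct: the paper itself states this lemma without proof, treating it as an immediate consequence of the normal form theorem of \cite{GreenGP}, and your proof is precisely that standard unpacking (reduced words give paths; a non-reduced label can be shortened by the elementary moves, yielding a strictly shorter path, so geodesic labels are reduced and all reduced words have the common length). The only cosmetic points are that exhibiting the length-$n$ path does not by itself prove the first assertion (geodesicity needs the converse inequality, which you do supply afterwards), and the shuffle-to-meet description of non-reduced words is a consequence of Green's normal form rather than the literal definition -- though the definition alone already suffices for your shortening step.
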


\noindent
In \cite{Qm}, it is shown that $\QM$ is a \emph{quasi-median graph} and a general geometry of such graphs is developed in analogy with CAT(0) cube complexes. The central idea is that hyperplanes can be defined in quasi-median graphs in such a way that the geometry of the quasi-median graph reduces to the combinatorics of its hyperplanes. Because quasi-median graphs are understood through the behaviour of their hyperplanes, we do not define quasi-median graphs here and refer to \cite{quasimedian} for various equivalent characterisations. We only describe hyperplanes. 

\begin{definition}\label{def:hyp}
Let $X$ be a quasi-median graph. A \emph{hyperplane} is a class of edges with respect to the transitive closure of the relation claiming that two edges that are opposite in a square or that belong to a common triangle are equivalent. The \emph{carrier} of a hyperplane $J$, denoted by $N(J)$, is the smallest induced subgraph of $\QM$ containing $J$. Two hyperplanes $J_1$ and $J_2$ are \emph{transverse} if they intersect a square along two distinct pairs of opposite edges.
\end{definition}
\begin{figure}
\begin{center}
\includegraphics[trim={0 16.5cm 10cm 0},clip,scale=0.4]{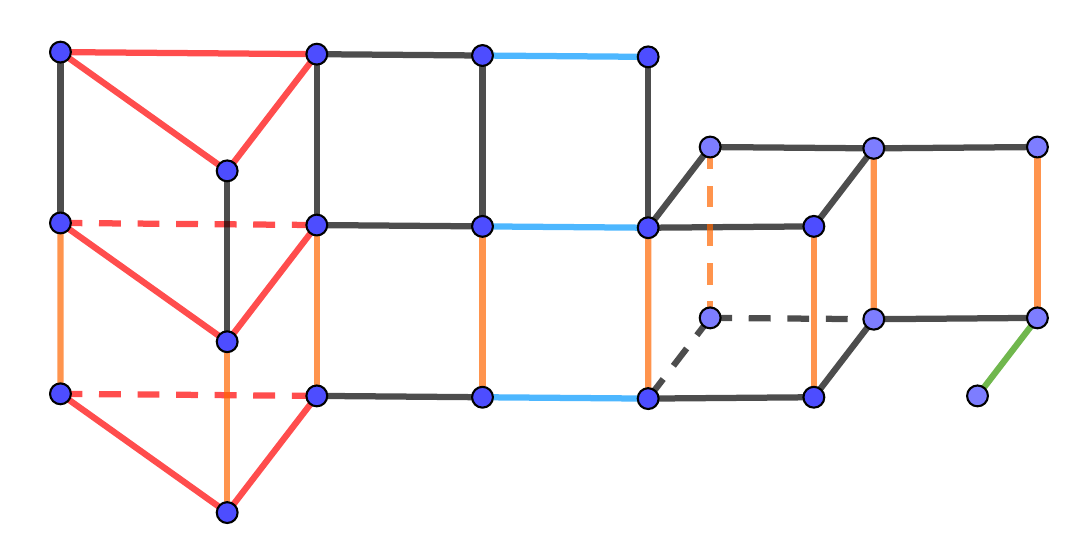}
\caption{Four hyperplanes in a quasi-median graph, coloured in red, blue, green, and orange. The orange hyperplane is transverse to the blue and red hyperplanes.}
\label{figure3}
\end{center}
\end{figure}

\noindent
We refer to Figure \ref{figure3} for examples of hyperplanes. The main difference between quasi-median graphs and CAT(0) cube complexes is that hyperplanes separate the complex into exactly two connected components in the latter case when they separate the graph in at least two (and possibly infinitely many) connected components in the former case. 

\begin{thm}\emph{\cite[Proposition 2.15]{Qm}}
Let $X$ be a quasi-median graph and $J$ a hyperplane. The graph obtained from $X$ by removing the interiors of all the edges in $J$ contains at least two connected components, referred to as \emph{sectors}. 
\end{thm}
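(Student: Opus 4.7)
The plan is to fix an edge $e = \{x, y\}$ of $J$ and build a ``side function'' $\phi \colon V(X) \to \{0, 1\}$ which distinguishes $x$ from $y$ and is invariant under crossing any edge that is not in $J$. Such a function forces $x$ and $y$ to lie in distinct connected components of the graph obtained by deleting the interiors of the edges of $J$, producing at least two sectors.

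Choosing $x$ as a basepoint, I would set $\phi(v) = 1$ if some geodesic from $x$ to $v$ traverses an edge of $J$, and $\phi(v) = 0$ otherwise. For this definition to be unambiguous, the key preliminary fact is that, in a quasi-median graph, any geodesic crosses each hyperplane at most once and the set of hyperplanes crossed by a geodesic from $u$ to $v$ depends only on $u$ and $v$ (two geodesics with the same endpoints differ by a sequence of square-flips and triangle-shuffles, which preserve the set of hyperplanes traversed). This is a core feature of the theory developed in \cite{Qm}, entirely parallel to the CAT(0) cube complex situation; it is also visible in the model case $\QM$ via Lemma \ref{lem:geodesicsQM}, since a graphically reduced word cannot contain two cancelling syllables from the same vertex-group in the same position. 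Once this is granted, the edge $e$ itself is a geodesic from $x$ to $y$ crossing $J$, so $\phi(x) = 0 \neq 1 = \phi(y)$.

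The bulk of the argument is then to verify that $\phi$ is constant along any edge $\{u, w\}$ not in $J$. I would concatenate a geodesic from $x$ to $u$ with the edge $\{u, w\}$ to obtain a path from $x$ to $w$ of length $d(x, u) + 1$. Either this path is itself a geodesic, in which case the set of hyperplanes it crosses is the previous set augmented by the hyperplane containing $\{u,w\}$, which is not $J$; or else a standard local simplification (square-flip or triangle-shuffle) produces a geodesic from $x$ to $w$ whose crossed hyperplanes are obtained by removing a single hyperplane, again distinct from $J$, from those crossed en route to $u$. In either case the truth value of ``$J$ is crossed'' is preserved, so $\phi(u) = \phi(w)$, and hence $\phi$ is constant on connected components of $X$ minus the edges of $J$.

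The main technical obstacle is precisely the preliminary invariance result: that the set of hyperplanes crossed by a geodesic is a function of the endpoints and that each hyperplane appears at most once. This lies at the heart of quasi-median geometry and is a local-to-global statement that I would import from \cite{Qm} rather than reprove; once it is in hand, the side-function argument above is a clean combinatorial unwinding, and the conclusion of the theorem follows immediately.
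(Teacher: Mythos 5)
First, note that the paper does not prove this statement at all: it is imported verbatim from \cite[Proposition~2.15]{Qm}, so there is no internal proof to compare against and your plan has to stand on its own as a proof of a foundational fact about quasi-median graphs.

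Judged on its own, your side-function strategy is the standard wall argument, but as written it defers exactly the content of the theorem and risks circularity. The ``key preliminary fact'' you propose to import from \cite{Qm} --- that a geodesic crosses each hyperplane at most once and that the set of hyperplanes crossed depends only on the endpoints --- is, in that reference and in the usual development of the theory, established \emph{after} (and partly by means of) the separation of hyperplanes into sectors, typically via convexity/gatedness of sectors and carriers. So quoting it to prove Proposition~2.15 inverts the logical order of the source; if instead you intend to prove it independently, the asserted mechanism (``two geodesics with the same endpoints differ by a sequence of square-flips and triangle-shuffles, which preserve the set of hyperplanes traversed'') is itself a nontrivial claim that you neither prove nor locate in the literature, and it is really where all the work lies --- the appeal to Lemma~\ref{lem:geodesicsQM} only checks the model case $\QM$, not a general quasi-median graph. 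There is also a concrete slip in your verification step: since quasi-median graphs contain triangles, an edge $\{u,w\}$ not in $J$ may have $d(x,u)=d(x,w)$, and then no single deletion or square-flip turns your concatenated path into a geodesic; the correct treatment of this case uses the triangle condition of weak modularity to produce a common neighbour $m$ with $d(x,m)=d(x,u)-1$ and the observation that all three edges of the triangle $\{m,u,w\}$ lie in one hyperplane (by the very definition of hyperplanes), which is not ``removing a single hyperplane'' from the set crossed en route to $u$. In short, the skeleton is right, but the invariance statement you treat as an importable black box is the theorem's actual substance, and until it is proved from first principles (e.g.\ by a disc-diagram or weak-modularity argument not relying on sector separation) the proposal has a genuine gap.
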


\paragraph{Negative curvature.} Typically, graph products are acylindrically hyperbolic. More precisely, given a finite simplicial graph $\Gamma$ and a collection of non-trivial groups $\mathcal{G}$ indexed by $V(\Gamma)$, the graph product $\Gamma \mathcal{G}$ is acylindrically hyperbolic as soon as the graph $\Gamma$ contains at least two vertices and is not a join \cite{MinasyanOsin}. (Recall that $\Gamma$ is a join if there exists a non-trivial partition $V(\Gamma) = A \sqcup B$ such that every vertex in $A$ is adjacent to every vertex in $B$.) This negative curvature comes from the \emph{irreducible elements} in $\Gamma \mathcal{G}$, namely the elements whose essential support is neither a single vertex nor contained in a join. Such elements act on the quasi-median graph $\QM$ like rank-one isometries, as justified by the following statement (which is a consequence of \cite[Proposition 4.24]{AutG}).

\begin{lemma}\label{lem:skewer}
Let $\Gamma$ be a simplicial graph and $\mathcal{G}$ a collection of groups indexed by $V(\Gamma)$. If $g \in \Gamma \mathcal{G}$ is an irreducible element, then it \emph{skewers} a pair of \emph{strongly separated} hyperplanes in $\mathrm{QM}(\Gamma, \mathcal{G})$. 
\end{lemma}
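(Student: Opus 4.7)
The plan is to reduce to the graphically cyclically reduced case, identify a natural axis of $g$ in $\QM$, and then exploit the fact that $\mathrm{supp}(g)$ is not contained in a join in order to produce a pair of strongly separated hyperplanes crossed by this axis.

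First, since both the irreducibility of $g$ and the conclusion of the lemma are invariant under conjugation in $\Gamma \mathcal{G}$, I would use the decomposition $g = xyx^{-1}$ with $y$ graphically cyclically reduced (recalled in Section~\ref{section:GP}) to reduce to the case where $g$ itself is cyclically reduced. By Lemma~\ref{lem:geodesicsQM}, the concatenation of the translates $\{g^k \cdot [1,g]\}_{k \in \mathbb{Z}}$ of a graphically reduced geodesic from $1$ to $g$ then yields a bi-infinite combinatorial geodesic line $L \subset \QM$ on which $\langle g \rangle$ acts by translations. A fundamental domain $[1,g]$ of $L$ crosses exactly $n$ hyperplanes $J_1, \ldots, J_n$, where $n$ denotes the syllable length of $g$, each $J_i$ being dual to the vertex-group of one syllable $s_i$ and hence having a well-defined type $u_i \in V(\Gamma)$; the multiset $\{u_1, \ldots, u_n\}$ covers exactly $V(\Lambda)$, where $\Lambda := \mathrm{supp}(g)$.

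Second, I would fix a syllable index $i$ and try to show that, for $N$ large enough, the pair $(J_i, g^N J_i)$ is strongly separated. Together with the observation that $g^N$ translates $J_i$ to $g^N J_i$ along $L$, this would provide the desired skewering. Suppose for contradiction that for every $N \geq 1$ some hyperplane $H_N$ is transverse to both $J_i$ and $g^N J_i$. Since the types of two transverse hyperplanes in $\QM$ must be adjacent vertices of $\Gamma$ (so that the corresponding syllables commute), the type of $H_N$ lies in the $\Gamma$-neighbourhood of $u_i$. Furthermore, $H_N$ must cross the portion of $L$ strictly between $J_i$ and $g^N J_i$, so it equals $g^k J_j$ for some $0 \leq k < N$ and some index $j$ whose associated vertex $u_j$ is adjacent to $u_i$ in $\Gamma$. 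Letting $N \to \infty$ and exploiting $g$-equivariance together with the pigeonhole principle, one forces every vertex of $V(\Lambda) \setminus \{u_i\}$ to be adjacent to $u_i$; this exhibits the join decomposition $V(\Lambda) = \{u_i\} \sqcup (V(\Lambda) \setminus \{u_i\})$ (legitimate because $|V(\Lambda)| \geq 2$ by irreducibility), contradicting the hypothesis that $\Lambda$ is not a join.

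The main obstacle is the last step: rigorously turning the persistence of transverse hyperplanes along $L$ into a full join decomposition of $\Lambda$. This requires a careful bookkeeping of the interplay between hyperplanes, sectors, and syllables of cyclically reduced words, which is precisely the combinatorial content packaged in \cite[Proposition~4.24]{AutG}. In practice, rather than redo this bookkeeping I would simply invoke that proposition directly, which is the approach suggested by the statement of the lemma.
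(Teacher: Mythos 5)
The paper does not prove this lemma at all: it is stated verbatim as ``a consequence of \cite[Proposition~4.24]{AutG}'', so your final fallback of simply invoking that proposition is exactly the paper's route, and as a citation-based proof your proposal is fine.

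That said, since you present the preceding sketch as a plausible direct argument, be aware that it has a genuine gap and is not just a matter of ``bookkeeping''. The step asserting that a hyperplane $H_N$ transverse to both $J_i$ and $g^N J_i$ ``must cross the portion of $L$ strictly between $J_i$ and $g^N J_i$'' is false: in a (quasi-)median graph a hyperplane can be transverse to two hyperplanes dual to a geodesic without being dual to any edge of that geodesic (already in the grid $\mathbb{Z}^2$, a horizontal hyperplane lying above a horizontal geodesic line $L$ is transverse to all the vertical hyperplanes crossing $L$ but meets no edge of $L$). So the identification $H_N = g^k J_j$ is unjustified, and with it the whole reduction to types carried by the axis. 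Moreover, even granting that identification, the pigeonhole argument only yields \emph{one} type $u_j$ adjacent to $u_i$ recurring among the $H_N$; it does not force \emph{every} vertex of $V(\Lambda)\setminus\{u_i\}$ to be adjacent to $u_i$, so the asserted join decomposition of $\Lambda$ does not follow. A correct direct proof has to control chains of pairwise transverse hyperplanes between $J_i$ and distant translates (not just single transversal hyperplanes at a fixed type $i$), which is precisely the content packaged in \cite[Proposition~4.24]{AutG}; so either cite it, as the paper does, or expect substantially more work than the sketch suggests.
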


\noindent
Recall that, given a quasi-median graph $X$, an isometry $g \in \mathrm{Isom}(X)$ \emph{skewers} a pair of hyperplanes $\{J_1,J_2\}$ if there exist a power $n \in \mathbb{Z}$ and two sectors $J_1^+,J_2^+$ respectively delimited by $J_1,J_2$ such that $g^n \cdot J_1^+ \subset J_2^+$. The hyperplanes $J_1$ and $J_2$ are \emph{strongly separated} if no hyperplane in $X$ is transverse to both $J_1$ and $J_2$. 

\medskip \noindent
Our proof of Theorem \ref{thm:Main} will be based on the following criterion:

\begin{prop}\label{prop:AcylHyp}
Let $G$ be a group acting on a quasi-median graph $X$. If $G$ contains an element that skewers a pair of strongly separated hyperplanes $\{J_1,J_2\}$ such that $\mathrm{stab}(J_1) \cap \mathrm{stab}(J_2)$ is finite, then $G$ must be acylindrically hyperbolic or virtually cyclic.
\end{prop}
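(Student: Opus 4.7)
The plan is to exhibit a non-elementary action of $G$ on a hyperbolic space admitting a WPD loxodromic element, and then conclude via Osin's characterisation of acylindrical hyperbolicity. Let $g \in G$ be the given element and let $J_1^+,J_2^+$ be sectors delimited respectively by $J_1,J_2$ with $g \cdot J_1^+ \subset J_2^+$; up to replacing $g$ by a positive power, we may also assume that $J_2^+ \subset J_1^+$.

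First I would organise the $\langle g \rangle$-orbit of $\{J_1,J_2\}$ into a bi-infinite chain of pairwise strongly separated hyperplanes $(g^n J_1)_{n \in \mathbb{Z}}$: strong separation is preserved under the action of $G$, and one checks by induction, using the nesting of sectors above, that no hyperplane can be transverse to two hyperplanes of the chain. This already implies that $g$ has infinite order and that its combinatorial translation length in $\QM$-type fashion is positive, so $g$ acts as a loxodromic isometry on $X$ along an axis whose gate-projection on each $g^n J_1$ is uniformly bounded (this is the quasi-median analogue of the rank-one / contracting property of \cite{Qm}).

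The next step is to promote this to loxodromic behaviour on a hyperbolic space. Here I would apply the Bestvina--Bromberg--Fujiwara projection complex construction to the $G$-orbit $\mathcal{H}:=G \cdot \{g^n J_1 \mid n \in \mathbb{Z}\}$, where the projection between two hyperplanes is defined using the gate-projection inherited from the quasi-median structure; strong separation of pairs coming from the chain ensures that the projection axioms hold after rescaling. The resulting quasi-tree $Y$ carries a $G$-action on which $g$ acts loxodromically, since it translates along the chain $(g^nJ_1)$ which embeds quasi-isometrically in $Y$. The action of $G$ is non-elementary unless the whole $G$-orbit of the chain is contained in a single axis, in which case $G$ is virtually cyclic.

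The main obstacle is to verify the WPD condition for $g$ on $Y$. Given $\varepsilon>0$, I would show that any element $h \in G$ simultaneously coarsely fixing two sufficiently far apart points of the axis of $g$ must send a hyperplane of the chain $(g^n J_1)$ close to itself in the projection complex, which, by the projection axioms and strong separation, forces $h$ to stabilise some pair $\{g^i J_1, g^j J_1\}$; conjugating back by a power of $g$, this produces a finite-index subgroup of the WPD witnessing set that lies in $\mathrm{stab}(g^i J_1) \cap \mathrm{stab}(g^j J_1)$, a $G$-conjugate of a finite extension of $\mathrm{stab}(J_1) \cap \mathrm{stab}(J_2)$, hence finite by hypothesis. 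With the WPD axiom in hand, Osin's criterion~\cite{OsinAcyl} yields that $G$ is acylindrically hyperbolic, completing the dichotomy.
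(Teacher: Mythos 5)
Your overall strategy (chain of strongly separated hyperplanes, projection complex, WPD element, Osin's criterion) is a genuinely different route from the paper's, but as written it has two real gaps, both located exactly at the technical heart. First, the Bestvina--Bromberg--Fujiwara step: you apply the projection-complex machinery to $\mathcal{H}=G\cdot\{g^nJ_1\mid n\in\mathbb{Z}\}$, which is essentially the full $G$-orbit of $J_1$, and you justify the projection axioms by ``strong separation of pairs coming from the chain''. But the axioms quantify over \emph{all} distinct pairs in the collection: one needs the gate-projection of $N(hJ_1)$ onto $N(h'J_1)$ to be uniformly bounded for every pair of distinct translates, and nothing in the hypotheses prevents two translates of $J_1$ from being transverse (or from being crossed by a common hyperplane), in which case these projections are unbounded. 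Strong separation is only known for $\{J_1,J_2\}$ and the nested chain it generates, so the quasi-tree $Y$ is not available as described; the usual repairs (working with the orbit of the contracting axis modulo parallelism, or avoiding the projection complex altogether) are themselves nontrivial. (A smaller instance of the same looseness: replacing $g$ by a positive power does not change the sectors $J_1^+,J_2^+$ at all, so it cannot be what lets you assume $J_2^+\subset J_1^+$; that nesting has to come from non-transversality of $J_1,J_2$ and the skewering inclusion.)

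Second, the WPD verification. The claim that an element $h$ coarsely fixing two far-apart points of the axis ``is forced to stabilise some pair $\{g^iJ_1,g^jJ_1\}$'' does not follow: coarse fixing only places $hg^iJ_1$ in a bounded region, and since $X$ is not assumed locally finite you cannot pass from ``close'' to ``finitely many candidates'' without an extra input (the finiteness of the set of hyperplanes separating two given vertices, as in \cite[Lemma~4.17]{Qm}, is the fact one would have to exploit, and it is not invoked). Moreover, even granting exact stabilisation, the stabiliser of $\{g^iJ_1,g^jJ_1\}$ is a conjugate of $\mathrm{stab}(J_1)\cap\mathrm{stab}(g^{j-i}J_1)$, which the hypothesis does not control: only $\mathrm{stab}(J_1)\cap\mathrm{stab}(J_2)$ is assumed finite, so your final finiteness claim (``a $G$-conjugate of a finite extension of $\mathrm{stab}(J_1)\cap\mathrm{stab}(J_2)$'') is unjustified; a correct argument has to be routed through translates of the specific pair $(J_1,J_2)$, e.g.\ via their unique pair of gates as in \cite[Corollary~2.16]{GM}. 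For comparison, the paper avoids all of this by turning the sectors into a wallspace, cubulating it, transferring the skewering element and the stabiliser condition to the resulting CAT(0) cube complex, and quoting the ready-made criterion of \cite[Theorem~2]{MR4028832}; your approach can likely be completed, but the two steps above are precisely where the work lies, and in the proposal they are asserted rather than proved.
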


\begin{proof}
Consider the collection of walls 
$$\mathcal{SW}:= \{ \{S,S^c\} \mid S \subset X \text{ sector}\}.$$
According to \cite[Lemma 4.17]{Qm}, any two vertices in $X$ are separated by only finitely many walls in $\mathcal{SW}$, i.e.\ $(X, \mathcal{SW})$ is a wallspace. If $G$ contains an element that skewers a pair $\{J_1,J_2\}$ of strongly separated hyperplanes in $X$, then by construction such an element also skewers a pair $\{J_1',J_2'\}$ of strongly separated hyperplanes in the CAT(0) cube complex $C(X, \mathcal{SW})$ constructed by cubulating the wallspace $(X, \mathcal{SW})$, where $J_1'$ (resp. $J_2'$) is the hyperplane in $C(X, \mathcal{SW})$ associated to the wall $\{S_1,S_1^c\}$ (resp. $\{S_2,S_2^c\}$) defined by the sector $S_1$ (resp. $S_2$) delimited by $J_1$ (resp. $J_2$) that contains $J_2$ (resp. $J_1$). As $\mathrm{stab}(J_1) \cap \mathrm{stab}(J_2) = \mathrm{stab}(J_1') \cap \mathrm{stab}(J_2')$, the desired conclusion follows from \cite[Theorem 2]{MR4028832}. 
\end{proof}

\section{Proof of the main theorem}\label{section:Proof}

\noindent
Fix two groups $A$ and $B$. The wreath product $A \wr B$ admits the following relative presentation:
$$\langle A_b \ (b \in B), B \mid bA_1b^{-1}=A_b \ (b \in B), [A_1,A_b]=1 \ (b \in B) \rangle$$
where $[A_1,A_b]=1$ is a shortcut for $[x,y]=1$ for all $x \in A_1$, $y \in A_b$. Fixing a subset $S \subset B$, let $A \square_S B$ denote the group defined by the truncated presentation
$$\langle A_b \ (b \in B), B \mid bA_1b^{-1}=A_b \ (b \in B), [A_1,A_b]=1 \ (b \in S) \rangle,$$
or equivalently by 
$$\langle A_b \ (b \in B), B \mid bA_1b^{-1}=A_b \ (b \in B), [A_{b_1},A_{b_2}]=1 \ (b_1,b_2 \in B, b_1^{-1}b_2 \in S) \rangle.$$
We denote by $\pi_S$ the canonical quotient map $A \square_S B \twoheadrightarrow A \wr B$. From the latter presentation, it follows that $A \square_S B$ decomposes as the semidirect product $\Gamma_S A \rtimes B$ where $\Gamma_S:= \mathrm{Cayl}(B,S)$ and where $B$ acts on $\Gamma_SA$ by permuting the vertex-groups according to its action on the Cayley graph $\Gamma_S$ by left-multiplication. 

\medskip \noindent
The key idea, first expressed in \cite{MR2295543}, is that the wreath product $A \wr B$ can be approximated by $A \square_SB=\Gamma_S A \rtimes B$ by taking a sufficiently large subset $S \subset B$. As an illustration of this idea:

\begin{lemma}\label{lem:Approx}
Let $A,B$ be two groups. For every finitely presented group $G$ and every morphism $\varphi : G \to A \wr B$, there exists a finite subset $S \subset B$, and a morphism $\psi : G \to A \square_S B$ such that $\varphi = \pi_S \circ \psi$. 
\end{lemma}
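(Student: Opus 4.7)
The plan is to apply the classical principle that a morphism from a finitely presented group into a colimit of groups factors through a term of the colimit. Here the colimit is realised concretely by observing that $A\wr B$ is obtained from $A\square_\emptyset B$ by imposing the infinite family of relations $\{[A_1,A_b]=1 : b\in B\}$, and that $A\square_S B$ realises the partial quotients where only the relations indexed by $S$ have been imposed.

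First, I fix a finite presentation $G=\langle g_1,\ldots,g_n\mid r_1,\ldots,r_m\rangle$. Comparing the relative presentations of $A\square_\emptyset B$ and $A\wr B$ shows that the canonical map $\pi_\emptyset : A\square_\emptyset B\twoheadrightarrow A\wr B$ is surjective and that its kernel is the normal closure, inside $A\square_\emptyset B$, of the family $\{[A_1,A_b] : b\in B\}$. Using the surjectivity of $\pi_\emptyset$, I lift each $\varphi(g_i)\in A\wr B$ to some element $\tilde g_i\in A\square_\emptyset B$.

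Next, for every $j\in\{1,\ldots,m\}$, the element $r_j(\tilde g_1,\ldots,\tilde g_n)$ lies in $\ker\pi_\emptyset$ because its image under $\pi_\emptyset$ equals $r_j(\varphi(g_1),\ldots,\varphi(g_n))=\varphi(r_j)=1$. By the description of $\ker\pi_\emptyset$ above, each such element can be written as a finite product of conjugates of commutators $[A_1,A_b]^{\pm 1}$, and hence involves only finitely many elements $b\in B$. Let $S\subset B$ be the finite union, over $j=1,\ldots,m$, of the elements $b$ appearing in such an expression for $r_j(\tilde g_1,\ldots,\tilde g_n)$.

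Finally, in $A\square_S B$ every commutator $[A_1,A_b]$ with $b\in S$ is trivial. Denoting by $\bar g_i\in A\square_S B$ the image of $\tilde g_i$ under the quotient map $A\square_\emptyset B\twoheadrightarrow A\square_S B$, we obtain $r_j(\bar g_1,\ldots,\bar g_n)=1$ for every $j$. The universal property of the finite presentation of $G$ therefore yields a homomorphism $\psi:G\to A\square_S B$ defined by $\psi(g_i)=\bar g_i$, and $\pi_S\circ\psi=\varphi$ since both maps agree on the generators $g_1,\ldots,g_n$. There is no real obstacle in this plan; the only point requiring a moment of care is the identification of $\ker\pi_\emptyset$ with the claimed normal closure, but this is immediate from the two relative presentations.
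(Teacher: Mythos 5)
Your proof is correct and follows essentially the same route as the paper: the paper also lifts a finite presentation of $G$ and invokes von Dyck's theorem after noting that the finitely many relators of $G$ only require finitely many of the commutation relations $[A_1,A_b]=1$, i.e.\ the classical truncated-presentation argument. Your version merely makes this explicit by factoring through $A \square_\emptyset B$ and identifying $\ker\pi_\emptyset$ as the normal closure of the commutators, which is a fine way of spelling out the fact the paper cites.
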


\begin{proof}
This is an easy consequence of a classical fact about truncated presentations; see for instance \cite[Lemma~3.1]{MR2764930}. We give a short argument here for the reader's convenience. Fix a generating set $\{s_1, \ldots, s_k\}$ of $G$. For every $1 \leq i \leq k$, let $w_i$ denote a word written over $A \cup B$ such that $\varphi(s_i)=w_i$ in $A \wr B$. Such a word exists because $A \cup B$ generates $A \wr B$. Thought of as words written over $\{w_1, \ldots, w_k\}^{\pm 1}$, the relations of a finite presentation of $G$ can be deduced from finitely many relations of the presentation of $A \wr B$ written above. Consequently, by taking a sufficiently large finite subset $S \subset B$, thanks to von Dyck's theorem on group presentations we can define a morphism $\psi : G \to A \square_S B$ satisfying $\varphi = \pi_S \circ \psi$ by setting $\psi(s_i)=w_i$ in $A \square_SB$. 
\end{proof}

\noindent
We are now ready to prove the main theorem of the article.

\begin{proof}[Proof of Theorem \ref{thm:Main}.]
Let $\varphi : G \rightarrow A \wr B$ be a morphism. Because $G$ is finitely presented, we know from Lemma \ref{lem:Approx} that there exist a finite subset $S \subset B$ and a morphism $\psi : G \to A \square_S B$ such that $\varphi = \pi_S \circ \psi$. We claim that $\psi(G)$ is the quotient of $G$ we are looking for.

\medskip \noindent
Consider the action of $\Gamma_SA$ on its quasi-median graph $\mathrm{QM}$. This action clearly extends to $\Gamma_S A \rtimes B$ since the action of $B$ on $\Gamma_SA$ by conjugations permutes the vertex-groups (and so stabilises the generating set used to define $\mathrm{QM}$). To be precise, for all $g \in \Gamma_S$ and $b \in B$, we have
$$gb : x \mapsto gbxb^{-1}, \text{ for every vertex $x \in \mathrm{QM}$}.$$
Our goal is to deduce from its action on $\mathrm{QM}$ that $\psi(G)$ is acylindrically hyperbolic, which will prove our second item.

\begin{claim}\label{claim:one}
The group $\Gamma_S A \rtimes B$ acts on $\mathrm{QM}$ with trivial stabilisers of oriented edges. 
\end{claim}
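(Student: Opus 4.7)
My plan is to unwind the definition of the action on $\mathrm{QM}$ and pin down an element $(g,b) \in \Gamma_S A \rtimes B$ by tracking both endpoints of a fixed oriented edge. An oriented edge of $\mathrm{QM}$ has the form $(x, xa)$ with $x \in \Gamma_S A$ and $a$ a non-trivial element of some vertex-group $A_v$, $v \in V(\Gamma_S) = B$. Using the formula $gb : y \mapsto gbyb^{-1}$, this edge is sent to the edge from $gbxb^{-1}$ to $gbxab^{-1} = (gbxb^{-1})\cdot (bab^{-1})$, which carries the label $bab^{-1}$.

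The first step will be to extract the label equation $bab^{-1} = a$. Since the action of $B$ on $\Gamma_S A$ by conjugation permutes the vertex-groups in the same way that $B$ acts on $V(\Gamma_S) = B$ by left multiplication, we have $bab^{-1} \in A_{bv}$. Distinct vertex-groups in a graph product intersect trivially (an immediate consequence of the normal form recalled in Section~\ref{section:GP}), so the equality $bab^{-1} = a$ combined with $a \ne 1$ forces $bv = v$. As the action of $B$ on itself by left multiplication is free, this yields $b = 1$.

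The second step is then immediate: feeding $b = 1$ back into the fixed-point equation on the initial vertex, $gbxb^{-1} = x$, reduces to $gx = x$, whence $g = 1$. Thus $(g,b) = (1,1)$, establishing triviality of the stabiliser of any oriented edge. I do not foresee any real obstacle here; the argument is essentially a direct computation whose only non-formal input is the triviality of pairwise intersections of vertex-groups in a graph product, which has already been recorded.
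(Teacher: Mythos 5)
Your proof is correct and follows essentially the same route as the paper: fix both endpoints of the oriented edge $(x,xs)$, deduce $bsb^{-1}=s$ with $s\neq 1$, use freeness of the $B$-action on the vertices of $\Gamma_S$ (which you spell out via trivial intersections of distinct vertex-groups) to get $b=1$, and then conclude $g=1$ from the equation at the initial vertex. The only difference is that you make explicit the step $bsb^{-1}\in A_{bv}$ forcing $bv=v$, which the paper compresses into the freeness assertion.
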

\noindent
Indeed, if $e=(x,xs)$ is an edge of $\mathrm{QM}$ and if $h \in \Gamma_SA \rtimes B$ stabilises $e$, then, writing $h=gb$ for some $g \in \Gamma_SA$ and $b \in B$, we have
$$\left\{ \begin{array}{l} gbxb^{-1} = x \\ gbxsb^{-1}=xs \end{array} \right.$$
From
$$x \cdot bsb^{-1}= gbxb^{-1} \cdot bsb^{-1}= gbxsb^{-1}=xs,$$
we deduce that $bsb^{-1}=s$. But $B$ acts freely on the vertices of $\Gamma_S$, so necessarily $b=1$. It then follows that $g=1$ from the equality $gbxb^{-1}=x$. Claim~\ref{claim:one} is proved.

\medskip \noindent
In order to prove the first item from our theorem, we assume that $\varphi(G)$ has an infinite projection on $B$ and a non-trivial intersection with $\bigoplus_B A \leq A \wr B$. As a consequence, $\psi(G)$ has an infinite projection on $B$ and a non-trivial intersection with $\Gamma_SA$. We want to prove that $\mathrm{ker}(\varphi)$ contains a non-abelian free subgroup and that $\psi(G)$ is acylindrically hyperbolic.

\begin{claim}\label{claim:three}
The intersection $\psi(G) \cap \Gamma_SA$ contains an irreducible element and is not finitely generated.
\end{claim}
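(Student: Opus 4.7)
The plan is to exploit the normality of $N := \psi(G) \cap \Gamma_S A$ in $\psi(G)$, together with the infiniteness of the $B$-projection $I$ of $\psi(G)$ (which holds by hypothesis of the first item), in order to translate essential supports freely across $\Gamma_S = \mathrm{Cayl}(B, S)$.

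First, I would verify that $N$ is non-trivial: picking any non-trivial $c \in \varphi(G) \cap \bigoplus_B A$ (non-empty by hypothesis) and lifting it through $\psi$ to some $\hat{c} \in \psi(G)$, the element $\hat{c}$ has trivial $B$-projection, hence $\hat{c} \in N \setminus \{1\}$. Next, I would record the key equivariance: for $a \in N$ and any lift $\hat{b} \in \psi(G)$ of $b \in I$, write $\hat{b} = g_0 b$ with $g_0 \in \Gamma_S A$; the conjugate $\hat{b} a \hat{b}^{-1} = g_0 (b a b^{-1}) g_0^{-1}$ again lies in $N$, and since essential support is conjugation-invariant while the $B$-action on $\Gamma_S A$ translates vertex groups along $B \curvearrowright V(\Gamma_S)$, one has $\mathrm{supp}(\hat{b} a \hat{b}^{-1}) = b \cdot \mathrm{supp}(a)$.

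The non-finite-generation of $N$ follows immediately. If $N = \langle h_1, \ldots, h_k \rangle$, the essential supports of elements of $N$ would be contained in the finite subgraph $F := \bigcup_i \mathrm{supp}(h_i) \subset V(\Gamma_S)$. Taking any $a \in N \setminus \{1\}$ with essential support $\Lambda$, the infinitude of $I$ combined with the finiteness of $F$ yields some $b \in I$ with $b \Lambda \not\subset F$; then $\hat{b} a \hat{b}^{-1} \in N$ has essential support $b \Lambda \not\subset F$, a contradiction.

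To produce an irreducible element, I would fix $a \in N \setminus \{1\}$ with essential support $\Lambda$ and exploit the local finiteness of $\Gamma_S$ (guaranteed by $|S| < \infty$) to choose $b \in I$ such that the $\Gamma_S$-distance between $\Lambda$ and $b \Lambda$ is at least $3$ (possible because only finitely many $b \in B$ bring $b \Lambda$ within distance $2$ of $\Lambda$). The product $w := a \cdot \hat{b} a \hat{b}^{-1} \in N$ then has essential support meeting both $\Lambda$ and $b \Lambda$: since any join-subgraph of $\Gamma_S$ has diameter at most $2$, an essential support containing two vertices at $\Gamma_S$-distance $\geq 3$ can be neither a single vertex nor contained in a join, so $w$ is irreducible. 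The main obstacle is the essential-support lower bound in this step: verifying that cyclic reduction of $a \cdot \hat{b} a \hat{b}^{-1}$ preserves at least one syllable supported in $\Lambda$ and one in $b \Lambda$ requires a careful inspection of the graphically reduced form, using the fact that the conjugators appearing in the cyclic-reduction decompositions of $a$ and $\hat{b} a \hat{b}^{-1}$ cannot ``bridge'' the wide separation between $\Lambda$ and $b \Lambda$ --- a standard but non-trivial consequence of the normal-form machinery recalled in Section~\ref{section:GP}.
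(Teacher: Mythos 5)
Your strategy is sound and genuinely different from the paper's, and with two repairs it becomes a complete proof. First, in the non-finite-generation step, the containment you assert is not literally correct: the essential support of a product is not controlled by the essential supports of the factors (already in a free product, $h_1=xyx^{-1}$ has essential support the vertex of $y$, yet $h_1h_2$ may have the vertex of $x$ in its essential support). You should instead let $F$ be the union of the \emph{full} syllable supports of the $h_i^{\pm1}$, i.e.\ the vertices whose groups appear in their graphically reduced words; this set is still finite, it bounds the full (hence the essential) support of every element of $\langle h_1,\dots,h_k\rangle$, and the rest of your translation argument goes through verbatim. Second, the step you defer --- that $\mathrm{supp}\bigl(a\cdot\hat b a\hat b^{-1}\bigr)$ meets both $\Lambda$ and $b\Lambda$ --- is indeed the crux, and as written your proof is incomplete there; it is true, but the efficient route is not an inspection of reduced forms. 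Use the canonical retraction $\rho_\Lambda\colon\Gamma_SA\to\langle\Lambda\rangle$ killing all vertex groups outside $\Lambda$: writing $a=xux^{-1}$ with $u$ cyclically reduced of support $\Lambda$, one has $\rho_\Lambda(a)=\rho_\Lambda(x)\,u\,\rho_\Lambda(x)^{-1}\neq1$, while $\rho_\Lambda(\hat ba\hat b^{-1})=1$ since its cyclically reduced core is supported on $b\Lambda$, disjoint from $\Lambda$; so $\rho_\Lambda(w)\neq1$, whereas $\mathrm{supp}(w)\cap\Lambda=\emptyset$ would force $\rho_\Lambda(w)=1$. The symmetric argument with $\rho_{b\Lambda}$, together with your (correct) observations that joins have diameter at most $2$ and that only finitely many $b$ fail $d(\Lambda,b\Lambda)\geq3$, then yields irreducibility.

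For comparison, the paper argues differently: it exhausts $\psi(G)\cap\Gamma_SA$ by finitely generated subgroups $G_n=\langle g_1,\dots,g_n\rangle$, takes the minimal subgraphs $\Lambda_n$ supporting them up to conjugacy (\cite[Proposition~3.10]{MR3365774}), shows the chain $(\Lambda_n)$ does not stabilise because $B$ acts properly on $\Gamma_S$ and the projection of $\psi(G)$ on $B$ is infinite (giving non-finite generation), and then, once some $\Lambda_n$ has at least two vertices and is not contained in a join (local finiteness of $\Gamma_S$), quotes \cite[Theorem~6.16]{MinasyanOsin} to get an element of $G_n$ with essential support exactly $\Lambda_n$, hence irreducible. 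Your version is more hands-on: it trades those two citations for an explicit element $a\cdot\hat ba\hat b^{-1}$ plus the retraction lemma, while the paper's device produces both conclusions of the claim from a single non-stabilising chain and is shorter given the quoted results.
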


\noindent
Fix an enumeration $\{g_1,g_2, \ldots\}$ of $\psi(G) \cap \Gamma_SA$. For every $n \geq 1$, set $G_n:= \langle g_1, \ldots, g_n \rangle$ and let $\Lambda_n$ denote the smallest subgraph in $\Gamma_S$ such that $G_n$ lies in a conjugate of the subgroup $\langle \Lambda_n \rangle$ generated by the vertex-groups indexed by the vertices in $\Lambda_n$ (such a subgraph exists according to \cite[Proposition 3.10]{MR3365774} and it is finite). Because $B$ acts metrically properly on $\Gamma_S$ and because $\psi(G)$ has an infinite projection on $B$, the sequence $\Lambda_1 \subset \Lambda_2 \subset \cdots$ does not stabilise. This implies that $\psi(G) \cap \Gamma_SA$ cannot be finitely generated, as claimed. Also, since $\Gamma_S$ is locally finite, there must exist an index $n \geq 1$ such that $\Lambda_n$ contains at least two vertices and is not contained in a join. According to \cite[Theorem 6.16]{MinasyanOsin}, $G_n$ contains an element whose essential support is $\Lambda_n$. Such an element is irreducible by construction, concluding the proof of our claim. 

\medskip \noindent
By combining Claim \ref{claim:three} and Lemma \ref{lem:skewer}, it follows that $\psi(G)$ contains an element that skewers a pair $\{J_1,J_2\}$ of strongly separated hyperplanes in $\mathrm{QM}$. Let $x_1 \in N(J_1)$ and $x_2 \in N(J_2)$ be two vertices minimising the distance between the carriers of $J_1$ and $J_2$. Because $J_1$ and $J_2$ are strongly separated, it follows from \cite[Corollary~2.16]{GM} that $x_1$ and $x_2$ are the only such vertices. Consequently, $\mathrm{stab}(J_1) \cap \mathrm{stab}(J_2) \subset \mathrm{stab}(x_1) \cap \mathrm{stab}(x_2)$. Because there exist only finitely many geodesics between $x_1$ and $x_2$ (see Lemma~\ref{lem:geodesicsQM}), we deduce from Claim \ref{claim:one} that $\mathrm{stab}(x_1) \cap \mathrm{stab}(x_2)$, and a fortiori $\mathrm{stab}(J_1) \cap \mathrm{stab}(J_2)$, is finite. As a consequence of Proposition \ref{prop:AcylHyp}, $\psi(G)$ is either acylindrically hyperbolic or virtually cyclic. But we know from Claim~\ref{claim:three} that $\psi(G)$ contains a subgroup that is not finitely generated, so the latter case cannot happen.

\medskip \noindent
Thus, we have proved that $\psi(G)$ is acylindrically hyperbolic. Now we want to prove that $\mathrm{ker}(\varphi)$ contains a non-abelian free subgroup. Because $\psi(G)$ is acylindrically hyperbolic, so are its infinite normal subgroup \cite[Lemma~7.2]{OsinAcyl}. Therefore, if $\psi(G) \cap \mathrm{ker}(\pi_S)$ is infinite, then it has to contain a non-abelian free subgroup, whose pre-image under $\psi$ defines a non-abelian free subgroup in $\mathrm{ker}(\varphi)$. Otherwise, if $\psi(G) \cap \mathrm{ker}(\varphi)$ is finite, there exists a finite-index subgroup $H \leq G$ such that $\pi_S$ is injective on $\psi(H)$. Consequently, $\varphi(H)$ defines an acylindrically hyperbolic group in $A \wr B$. Moreover, the fact that $\varphi(G)$ intersects non-trivially $\bigoplus_B A$ and has an infinite projection on $B$, implies that $\varphi(G) \cap \bigoplus_BA$ is actually infinite, from which we deduce that $\varphi(H) \cap \bigoplus_BA$ must be infinite as well. As an infinite normal subgroup of acylindrically hyperbolic group, $\varphi(H) \cap \bigoplus_BA$ must be acylindrically hyperbolic. In particular, there must exist some $h \in \varphi(H) \cap \bigoplus_BA$ whose centraliser in $\varphi(H)$ is virtually cyclic (for instance a generalised loxodromic element \cite[Lemma~6.8]{OsinAcyl}). Because $\varphi(H)$ has an infinite projection on $B$, there must exist $a \in \bigoplus_BA$ and $b \in B$ such that $ab$ belongs to $\varphi(H)$ and such that $\mathrm{supp}(h) \cap b \cdot \mathrm{supp}(h)=\emptyset$, where the support of an element $g$ of $\bigoplus_BA$ refers to the smallest $S \subset B$ such that $g \in \bigoplus_SA$. Because
$$\mathrm{supp} \left( h^{ab} \right) = b \cdot \mathrm{supp} \left( h^a \right) = b \cdot \mathrm{supp}(h),$$
we have $\langle h,h^{ab} \rangle = \langle h \rangle \oplus \langle h^{ab}\rangle \simeq \mathbb{Z}^2$, contradicting the fact that $h$ has a virtually cyclic centraliser in $\varphi(H)$. This concludes the proof of the first item of our theorem. 

\medskip \noindent
Finally, in order to justify the second item from our theorem, we need to show that, if $\varphi(G)$ has infinite projection on $B$ and is not contained in a conjugate of $B$, then $\psi(G)$ acts non-trivially on a finite-dimensional CAT(0) cube complex with hyperplane-stabilisers that virtually embed in finite powers of $A$. Because $\mathrm{QM}$ is canonically (hence $\psi(G)$-equivariantly) quasi-isometric to a CAT(0) cube complex of dimension $\mathrm{clique}(\Gamma_S)$ according to \cite[Proposition~4.16 and Corollary~8.7]{Qm}, it suffices to show that, under our assumptions, $\psi(G)$ has unbounded orbits in $\mathrm{QM}$ and that hyperplane-stabilisers in $A \square_S B$ virtually embed into finite powers of $A$. The latter assertion is clear thanks to \cite[Corollary~8.10]{Qm} (see also \cite[Theorem~2.10]{GM}). If $\psi(G)$ instead has finite orbits, then it has to stabilise a \emph{prism} of $\mathrm{QM}$, namely a coset $g\langle \Lambda \rangle \subset \mathrm{QM}$ for some $g \in \Gamma_S A$ and some complete subgraph $\Lambda \subset \Gamma_S$ \cite[Theorem~2.115 and Corollary~8.7]{Qm}. Up to conjugating $\psi(G)$ with an element in $\bigoplus_A B$, we assume that $g=1$. Observe that, for all $a \in \Gamma_S A$ and $b \in B$, we have $ab \cdot \langle \Lambda \rangle = a \langle b \Lambda \rangle$; consequently, $ab$ stabilises $\langle \Lambda \rangle$ if and only if $b\Lambda= \Lambda$ and $a \in \langle \Lambda \rangle$. Thus, either $\Lambda$ is empty and the stabiliser of $\langle \Lambda \rangle$ in $A \square_S B$ is $B$; or $\Lambda$ is non-empty and the stabiliser of $\langle \Lambda \rangle$ in $A \square_S B$ is $\langle \Lambda \rangle \cdot \mathrm{stab}_B(\Lambda)$, where $\mathrm{stab}_B(\Lambda)$ is necessarily finite. In the former case, we have 
$$\varphi(G) = \pi_S(\psi(G)) \subset \pi_S(B)=B,$$
proving that $\varphi(G)$ is contained in $B$ (up to conjugacy); and, in the latter case, we have 
$$\varphi(G) = \pi_S(\psi(G)) \underset{\text{virtually}}{\subset} \pi_S( \Gamma_S A) = \bigoplus\limits_{B} A,$$
proving that $\varphi(G)$ has finite projection on $B$. 

\medskip \noindent
This concludes the proof of the second item from our theorem. In order to prove the last assertion of our theorem, we deduce from \cite[Theorem~5.1]{MR1347406} that $\psi(G)$ semisplits over some hyperplane-stabiliser, so the conclusion follows from what we already know.
\end{proof}

\section{Automorphisms}\label{section:Auto}

\noindent
In this section, we record some consequences of Theorem~\ref{thm:Main} on automorphisms of wreath products. Our starting point is given by the following observation:

\begin{lemma}\label{lem:AutoConjB}
Let $A,B$ be two groups with $A$ finite and $B$ finitely presented and one-ended. Every automorphism of $A \wr B$ sends $B$ to a conjugate of $B$.
\end{lemma}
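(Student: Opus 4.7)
The plan is to apply Theorem~\ref{thm:Main} directly to the restriction $\alpha|_B : B \to A \wr B$ of an arbitrary automorphism $\alpha$ of $A \wr B$, and then to promote the containment it yields to a conjugacy by an elementary computation in the wreath product.

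First, I would observe that $\alpha|_B$ is injective and that $B$ is finitely presented, so Theorem~\ref{thm:Main} applies to $\varphi := \alpha|_B$. Moreover, because the factorisation $\psi : B \to A \square_S B$ produced in the proof of that theorem satisfies $\alpha|_B = \pi_S \circ \psi$, the injectivity of $\alpha|_B$ forces the injectivity of $\psi$, so the quotient $\overline{B}$ coincides with $B$ itself. The first item of Theorem~\ref{thm:Main} would then place a non-abelian free subgroup inside $\mathrm{ker}(\alpha|_B) = 1$, which is absurd; and the second item, combined with the last assertion of the theorem and the finiteness of $A$, would force $B = \overline{B}$ to be multi-ended, contradicting the one-endedness assumption. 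Hence the hypotheses of both items must fail simultaneously. Since being contained in a conjugate of $B$ already implies trivial intersection with $\bigoplus_B A$, the combined negation boils down to the following alternative: either $\alpha(B)$ has finite projection on $B$, or $\alpha(B) \subseteq g B g^{-1}$ for some $g \in A \wr B$.

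Next I would rule out the finite-projection case. If the projection of $\alpha(B)$ on $B$ were finite, a finite-index subgroup $B_0 \leq B$ would satisfy $\alpha(B_0) \subseteq \bigoplus_B A$; but $B_0$ is finitely generated, $A$ is finite, and so $\bigoplus_B A$ is locally finite, making $\alpha(B_0)$ finite. This contradicts the injectivity of $\alpha$ on the infinite group $B_0$ (infinite since $B$ is one-ended, hence infinite). Thus $\alpha(B) \subseteq g B g^{-1}$ for some $g \in A \wr B$.

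Finally, to upgrade this containment to equality, I would apply the same reasoning to $\alpha^{-1}$, obtaining $\alpha^{-1}(B) \subseteq g' B g'^{-1}$ for some $g'$. Applying $\alpha$ and chaining the two inclusions yields $B \subseteq h B h^{-1}$ with $h := \alpha(g') g$. Writing $h = (a,b) \in \bigoplus_B A \rtimes B$ and expanding the conjugation $h(1,c)h^{-1}$, the inclusion $B \subseteq h B h^{-1}$ translates into the requirement that every element of $B$ fix $a$ under the permutation action of $B$ on $\bigoplus_B A$; because this action (induced by left multiplication of $B$ on itself) is free, this forces $a = 1$, hence $h \in B$ and $B = h B h^{-1}$. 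Tracing back through the chain of inclusions, this forces $\alpha(B) = \alpha(g')^{-1} B \alpha(g')$, which is exactly a conjugate of $B$. The main conceptual obstacle is verifying that the two alternatives offered by Theorem~\ref{thm:Main} are incompatible with one-endedness of $B$ once one accounts for the injectivity of $\alpha|_B$; the final wreath-product calculation is then routine.
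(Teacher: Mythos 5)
Your argument is correct, and its first half (apply Theorem~\ref{thm:Main} to the injective restriction $\alpha|_B$, note that injectivity forces $\overline{B}\cong B$, rule out item~1 via the free subgroup in the kernel and item~2 via one-endedness versus multi-endedness, and discard the finite-projection case using that $\bigoplus_B A$ is locally finite while $B$ is infinite and finitely generated) is the same strategy as the paper, which in fact leaves the finite-projection elimination implicit where you spell it out. The difference lies in the upgrade from containment to conjugacy: the paper applies the same reasoning to $\varphi^{-1}$, chains the inclusions to get $\varphi(B)\subset g\varphi(h)\varphi(B)\varphi(h)^{-1}g^{-1}$, and then invokes almost malnormality of $B$ in $A\wr B$ (quoting an external lemma from \cite{GTwreath}) together with the infiniteness of $B$ to force equality; you instead chain the inclusions into $B\subseteq hBh^{-1}$ and resolve this by a direct computation in $\bigoplus_B A\rtimes B$, showing that the $\bigoplus_B A$-coordinate of $h$ must be invariant under all shifts and hence trivial (since the left-multiplication action of the infinite group $B$ on itself has infinite orbits, a nontrivial invariant element would have infinite support), so $h\in B$ and all inclusions collapse to equalities. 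Your route is more self-contained and elementary -- it is essentially the same computation as the paper's Fact~\ref{fact:SelfNormalising} on the normaliser of $B$ -- at the cost of a slightly longer verification, whereas the paper's route is shorter but outsources the key rigidity to the cited almost-malnormality statement. One cosmetic remark: the reason $a=1$ is not freeness alone but freeness together with the infiniteness of $B$ (so that orbits in the index set are infinite, incompatible with a nonempty finite invariant support); since you have $B$ infinite from one-endedness, the step stands.
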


\begin{proof}
Let $\varphi : A \wr B \to A \wr B$ be an automorphism. Because $B$ is finitely presented, Theorem\ref{thm:Main} applies to the restriction of $\varphi$ to $B$. Since $B$ is one-ended and $\varphi$ is injective, it follows that $\varphi(B)$ is contained in a conjugate of $B$, say $gBg^{-1}$. Similarly, $\varphi^{-1}(B)$ must be contained in some conjugate $hBh^{-1}$. We have
$$\varphi(B) \subset gBg^{-1} = g \varphi ( \varphi^{-1}(B)) g^{-1}= g \varphi(h) \varphi(B) \varphi(h)^{-1}g^{-1}.$$
But $B$ (and a fortiori $\varphi(B)$) is almost malnormal in $A \wr B$ (see for instance \cite[Lemma~7.11]{GTwreath}), so the inclusion above must be an equality since $B$ is infinite, hence $\varphi(B)=gBg^{-1}$. 
\end{proof}

\noindent
Fix two groups $F,H$. In our definitions of automorphisms below, we refer to an element of $F \wr H$ as a pair $(c,p)$ where $c : H \to F$ is a finitely supported map and where $p \in H$. Consequently, $(c_1,p_1) \cdot (c_2,p_2)= (c_1(\cdot) c_2(p_1^{-1} \cdot), p_1p_2)$ for all $(c_1,p_1),(c_2,p_2) \in F \wr H$. 
\begin{description}
	\item[Automorphisms of $H$.] Given an automorphism $\varphi \in \mathrm{Aut}(H)$, the map $$\overline{\varphi} : \left\{ \begin{array}{ccc} F \wr H & \to & F \wr H \\ (c,p) & \mapsto & \left( c \circ \varphi^{-1}, \varphi(p) \right) \end{array} \right.$$ defines an automorphism of $F \wr H$. Moreover, the map $\varphi \mapsto \overline{\varphi}$ defines a monomorphism $\mathrm{Aut}(H) \hookrightarrow \mathrm{Aut}(F \wr H)$. In the sequel, we will identify $\mathrm{Aut}(H)$ with its image in $\mathrm{Aut}(F \wr H)$ when this does not cause ambiguity. 
\end{description}
\begin{description}
	\item[$H$-equivariant automorphisms of $\bigoplus_H F$.] For every automorphism $\varsigma \in \mathrm{Aut}(\bigoplus_H F)$ that is equivariant with respect to the action of $H$ on $\bigoplus_H F$ by coordinate-shifting, i.e.\ $\varsigma(h \cdot c)= h \cdot \varsigma(c)$ for all $h \in H$ and $c \in \bigoplus_H F$, the map $$\overline{\varsigma} : \left\{ \begin{array}{ccc} F \wr H & \to & F \wr H \\ (c,p) & \mapsto & (\varsigma(c),p) \end{array} \right.$$ defines an automorphism of $F \wr H$. We denote by $\mathrm{Aut}_H(\bigoplus_H F)$ the subgroup of $\mathrm{Aut}(F \wr H)$ consisting of all these automorphisms.
\end{description}
Verifying that these maps define automorphisms is a direct and straightforward computation. Notice that, for every automorphism $\varphi \in \mathrm{Aut}(F)$,
$$\widehat{\varphi} : (c,h) \mapsto (\varphi \circ c,h), \ (c,h) \in F \wr H$$
yields an example of an $H$-equivariant automorphism of $\bigoplus_H F$. When there is no possible confusion, we identify $\mathrm{Aut}(F)$ with its image in $\mathrm{Aut}(F \wr H)$. 

\medskip \noindent
Our first observation is that these automorphisms, together with the inner automorphisms, are sufficient to generate the automorphism group in many cases.

\begin{prop}\label{prop:AutOneEnded}
Let $F$ be a finite group and $H$ a one-ended finitely presented group. Then 
$$\mathrm{Aut}(F \wr H) = \mathrm{Inn}(F \wr H) \cdot \left(\mathrm{Aut}_H \left( \bigoplus\limits_H F \right)\rtimes \mathrm{Aut}(H)\right).$$
In particular, $\mathrm{Aut}(F \wr H)$ is generated by the inner automorphisms, the automorphisms of $H$, and the $H$-equivariant automorphisms of $\bigoplus_H F$. 
\end{prop}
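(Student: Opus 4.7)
The strategy is to reduce, via composition with inner automorphisms and automorphisms of $H$, to the claim that any $\Phi' \in \mathrm{Aut}(F \wr H)$ which fixes $H \leq F \wr H$ pointwise must already preserve the base group $N := \bigoplus_H F$. Granted this reduction, the restriction $\Phi'|_N$ is automatically $H$-equivariant for the conjugation action, since $\Phi'(h \cdot c) = \Phi'(h) \Phi'(c) \Phi'(h)^{-1} = h \Phi'(c) h^{-1}$ whenever $\Phi'$ fixes $H$; hence $\Phi' \in \mathrm{Aut}_H(N)$, and combining the various reductions yields the claimed factorisation. The reduction itself is immediate: given $\Phi \in \mathrm{Aut}(F \wr H)$, Lemma~\ref{lem:AutoConjB} produces $g \in F \wr H$ with $\Phi(H) = gHg^{-1}$, so $\mathrm{conj}(g^{-1}) \circ \Phi$ preserves $H$ setwise, and further composition with the extension to $F \wr H$ of $\alpha^{-1} \in \mathrm{Aut}(H)$, where $\alpha := (\mathrm{conj}(g^{-1}) \circ \Phi)|_H$, produces the desired $\Phi'$.

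To show $\Phi'(N) = N$, I would consider the retraction $\tau := \pi \circ \Phi' : F \wr H \to H$ (where $\pi$ is the canonical projection) and prove that $\tau = \pi$, equivalently that its restriction $\sigma := \tau|_F : F \to H$ to the copy $F = e_1(F) \subset N$ is trivial. Writing $\Phi'(e_1(f)) = (c_f, \sigma(f))$ for every $f \in F$, equivariance under conjugation by $H$ gives $\Phi'(e_h(f)) = (h \cdot c_f, h\sigma(f)h^{-1})$, and the injectivity constraint $\Phi'^{-1}(H) = H$ forces $c_f \neq 0$ whenever $f \neq 1_F$. Exploiting the commutativity relations $e_1(f) \cdot e_h(g) = e_h(g) \cdot e_1(f)$ for $h \neq 1$, a direct computation on the $H$-parts shows that $Q := \sigma(F)$ commutes with every conjugate $hQh^{-1}$, so $Q$ is abelian and its conjugates pairwise commute in $H$.

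To obtain the required contradiction when $\sigma \neq 1$, I would then exploit the injectivity of $\Phi'$ on the family $\{e_h(f) : h \in H\}$, which forces the intersection $\mathrm{Stab}_H(c_f) \cap C_H(\sigma(f))$ to be trivial (the stabiliser being taken for the shift action of $H$ on $N$). Expanding the $N$-parts of $\Phi'(e_1(f) \cdot e_h(f)) = \Phi'(e_h(f) \cdot e_1(f))$ for $h$ in the centraliser of $\sigma(f)$ produces a functional equation on $c_f$ which, in the abelian case for $H$, collapses to $(1 - \sigma(f))(1 - h) \cdot c_f = 0$ as $h$ ranges over $H$; since the only $H$-invariant element of $N$ is zero, this forces $c_f$ to be $\sigma(f)$-invariant, placing $\sigma(f)$ in $\mathrm{Stab}_H(c_f) \cap C_H(\sigma(f)) = \{1\}$ and contradicting $\sigma(f) \neq 1$. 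The main obstacle is to extend this final $\sigma(f)$-invariance argument to an arbitrary one-ended finitely presented $H$: when $C_H(\sigma(f))$ is infinite the support argument for the non-abelian analogue of the functional equation still goes through, because the finite support of $c_f$ cannot support non-trivial $C_H(\sigma(f))$-invariant data; but when $C_H(\sigma(f))$ is finite one has to complement the pairwise commutativity of the conjugates of $Q$ with a finer use of the one-endedness of $H$ in order to preclude the resulting infinite abelian normal subgroup of $H$ of bounded exponent generated by these conjugates.
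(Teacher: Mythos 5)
Your opening reduction is exactly the one the paper uses: compose with an inner automorphism (via Lemma~\ref{lem:AutoConjB}) and then with an automorphism of $H$ so that the automorphism fixes $H$ pointwise, after which $H$-equivariance of the restriction to $\bigoplus_H F$ is automatic. The gap is in the step you then try to prove by hand, namely that an automorphism fixing $H$ pointwise preserves the base group $N=\bigoplus_H F$. The paper does not prove this at all: it simply invokes the classical fact that the base group of a wreath product is characteristic, citing \cite[Theorem~9.12]{MR188280} (P.~M.~Neumann), and then the whole proposition is a three-line argument. Your attempt to reprove this fact is incomplete, and you say so yourself: the case where $C_H(\sigma(f))$ is finite is left open, with only the hope that ``a finer use of the one-endedness of $H$'' will rule out the infinite normal abelian subgroup of bounded exponent generated by the pairwise-commuting conjugates of $Q=\sigma(F)$.

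That hoped-for obstruction does not exist as stated: a one-ended finitely presented group can perfectly well contain an infinite normal abelian subgroup of bounded exponent (for instance, Baumslag's finitely presented metabelian group containing $\mathbb{Z}/2\mathbb{Z}\wr\mathbb{Z}$ contains a normal infinite elementary abelian $2$-subgroup, and it is one-ended), so one-endedness alone cannot close the argument; note also that neither one-endedness nor finite presentability of $H$ is actually what makes the base group characteristic -- these hypotheses are only used, through Theorem~\ref{thm:Main}, to get Lemma~\ref{lem:AutoConjB}. In addition, some intermediate claims in your sketch are asserted rather than proved (e.g.\ that $[Q,hQh^{-1}]=1$ for all $h\neq 1$ forces $Q$ itself to be abelian, and the ``functional equation'' analysis beyond the abelian case). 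The honest fix is either to quote Neumann's characteristic-subgroup theorem, as the paper does, or to give a complete self-contained proof of it for $H$ infinite; as written, the core of the proposition is missing.
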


\begin{proof}
Let $\varphi \in \mathrm{Aut}(F \wr H)$ be an automorphism. As a consequence of Lemma~\ref{lem:AutoConjB}, $\varphi(H)$ is a conjugate of $H$. Up to composing $\varphi$ with an inner automorphism, we assume that $\varphi(H)=H$. Up to composing $\varphi$ with an automorphism of $H$, we also assume that $\varphi$ fixes $H$ pointwise. Because $\bigoplus_HF$ is characteristic \cite[Theorem~9.12]{MR188280}, it follows that there exists an automorphism $\varsigma : \bigoplus_H F \to \bigoplus_H F$ such that $\varphi(f,p)=(\varsigma(f),p)$ for every $(f,p) \in F \wr H$. Necessarily, $\varsigma$ must be $H$-equivariant, i.e.\ $\varphi$ is induced by an $H$-equivariant automorphism of $\bigoplus_H F$. 

\medskip \noindent
Thus, we have proved the decomposition $$\mathrm{Aut}(F \wr H) = \mathrm{Inn}(F \wr H) \cdot \mathrm{Aut}(H) \cdot \mathrm{Aut}_H \left( \bigoplus\limits_H F \right).$$ As $\mathrm{Aut}(H)$ intersects trivially and normalises $\mathrm{Aut}_H\left( \bigoplus_H F \right)$, the desired conclusion follows.
\end{proof}

\noindent
It is worth noticing that the proof above shows that the conclusion of Proposition~\ref{prop:AutOneEnded} holds more generally when $F$ is an arbitrary group and when $H$ is a finitely presented group that does not semisplit over a subgroup of a finite power of $F$. For instance, this applies for wreath products of the form $\mathbb{Z} \wr (\text{rigid hyperbolic group})$. (A hyperbolic group is \emph{rigid} if it is a non-elementary and if it does not split over a virtually cyclic subgroup.)

\medskip \noindent
In view of Proposition~\ref{prop:AutOneEnded}, a natural problem is to investigate the structure of the group of $H$-equivariant automorphisms of $\bigoplus_H F$. In the sequel, we focus on the case when $F$ is cyclic. Even restricting to this case, we shall see that understanding $H$-equivariant automorphisms may be quite tricky.

\begin{description}
	\item[Units of $F {[} H {]}$.] If $F$ is cyclic, identifying $\bigoplus_H F$ with the group ring $F[H]$ allows us to endow $\bigoplus_H F$ with a product $\cdot $ such that $(\bigoplus_H F, + , \cdot )$ is a ring. For every left-invertible $u \in \bigoplus_H F$, the map $$\overline{u} : \left\{ \begin{array}{ccc} F \wr H & \to & F \wr H \\ (c,p) & \mapsto & (c \cdot u, p) \end{array} \right.$$ defines an automorphism of $F \wr H$. Observe that two distinct units induce two distinct automorphisms of $F \wr H$. We denote by $F[H]^\times$ the subgroup of $\mathrm{Aut}(F \wr H)$ consisting of all these automorphisms.
\end{description}
Verifying that $\overline{u}$ defines an automorphism of $F \wr H$ is a straightforward computation once we have observed that the action of $H$ on $\bigoplus_H F$ by coordinate-shifting coincides with the action of $H$ by left multiplication in the ring $F[H]$. This observation also implies that $F[H]^\times \subset \mathrm{Aut}_H(\bigoplus_HF)$. The reverse inclusion turns out to hold as well:

\begin{lemma}\label{lem:Units}
Assume that $F$ is cyclic. The $H$-equivariant automorphisms of $\bigoplus_H F$ are induced by the units of $F[H]$.
\end{lemma}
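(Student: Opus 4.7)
The plan is to identify $\bigoplus_H F$ with the group ring $F[H]$ and to recognise an $H$-equivariant group automorphism as a left $F[H]$-module automorphism of the rank-one free module $F[H]$, which is necessarily right-multiplication by a left-invertible element.

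First, I would unwind the identification: as already noted before the lemma, the action of $H$ on $\bigoplus_H F$ by coordinate-shifting matches left multiplication by $H$ inside the ring $F[H]$. Let $\varsigma \in \mathrm{Aut}_H(\bigoplus_H F)$. The point is that, because $F$ is cyclic, any additive group endomorphism of $\bigoplus_H F$ is automatically $F$-linear: when $F=\mathbb{Z}/n\mathbb{Z}$, this is because a $\mathbb{Z}$-linear map between $\mathbb{Z}/n\mathbb{Z}$-modules is always $\mathbb{Z}/n\mathbb{Z}$-linear, and when $F=\mathbb{Z}$, $F$-linearity is the same as additivity. Combined with $H$-equivariance, $\varsigma$ is a left $F[H]$-module endomorphism of $F[H]$.

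Next, I would set $u:=\varsigma(1)$. Using left $F[H]$-linearity, $\varsigma(x)=\varsigma(x\cdot 1)=x\cdot\varsigma(1)=xu$ for every $x\in F[H]$. Hence $\varsigma$ is right multiplication by $u$, which is precisely the map underlying $\overline{u}$. Since $\varsigma$ is surjective, there exists $v\in F[H]$ with $vu=1$, so $u$ is left-invertible, i.e.\ $u\in F[H]^\times$ in the sense of the paper. This shows $\varsigma=\overline{u}$ and proves the inclusion $\mathrm{Aut}_H(\bigoplus_H F)\subset F[H]^\times$; the reverse inclusion has already been noted.

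The only mildly delicate point, which I would be careful about, is the automatic $F$-linearity step, since without it one would only get a $\mathbb{Z}$-linear and $H$-equivariant map, i.e.\ a left $\mathbb{Z}[H]$-module map, and the element $u=\varsigma(1)$ would then only live in $\mathbb{Z}[H]$ rather than in $F[H]$; the cyclicity of $F$ is exactly what bypasses this issue. Everything else is a direct computation.
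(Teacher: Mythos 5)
Your proof is correct and follows essentially the same route as the paper: identify $\bigoplus_H F$ with $F[H]$, use cyclicity of $F$ to upgrade additivity to $F$-linearity, conclude that the map is a left $F[H]$-module endomorphism, hence right multiplication by $u=\varsigma(1)$, and get left-invertibility of $u$ from surjectivity. The only difference is that you spell out the automatic $F$-linearity step, which the paper merely asserts.
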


\begin{proof}
Let $\sigma \in \mathrm{Aut}( \bigoplus_H F)$ be an $H$-equivariant automorphism. Being $H$-equivariant amounts to saying that $\sigma(h\cdot  c) = h \cdot  \sigma(c)$ for all $h \in H$ and $c \in \bigoplus_H F$. Because $\sigma$ is automatically $F$ linear, it follows that $\sigma(c_1 \cdot  c_2) =c_1 \cdot \sigma(c_2)$ for all $c_1,c_2 \in \bigoplus_H F$. In other words, $\sigma$ is a morphism of $(\bigoplus_H F)$-module. Consequently, 
$$\sigma(c)= \sigma(c \cdot  1) = c \cdot  \sigma(1) \text{ for every } c \in \bigoplus_H F,$$
where $\sigma(1)$ is left-invertible (indeed, if $c \in \bigoplus_H F$ is such that $\sigma(c)=1$, then one has $c \cdot  \sigma(1) = \sigma(c) = 1$). 
\end{proof}

\noindent
As a consequence of Lemma~\ref{lem:Units}, understanding the $H$-equivariant automorphisms of $F \wr H$ is as hard as understanding the units of the group ring $F[H]$. The latter problem is well-known to be quite difficult. Indeed, the so-called Kaplansky unit conjecture, which states that the group ring $F[H]$ has only trivial units when $F$ is a field and $H$ torsion-free, remained an open problem for a long time and has been disproved only recently \cite{Kaplansky}. Therefore, Lemma~\ref{lem:Units} suggests that the structure of $\mathrm{Aut}_H(\bigoplus_H F)$, and a fortiori of $\mathrm{Aut}(F \wr H)$, may be highly non-trivial and goes beyond the scope of our current understanding in full generality. 

\medskip \noindent
Before turning to the proof of Corollary~\ref{cor:IntroAuto} from the introduction, let us record the following elementary observation:

\begin{fact}\label{fact:SelfNormalising}
Let $A,B$ be two groups. The normaliser of $B$ in the wreath product $A \wr B$ coincides with $\{ (c,p) \in A \wr B \mid p \in B, c : B \to A \text{ constant} \}$. Consequently, if $B$ is infinite then it is a self-normalising subgroup.
\end{fact}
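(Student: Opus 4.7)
The plan is to prove both containments directly by explicit conjugation using the multiplication rule $(c_1,p_1) \cdot (c_2,p_2) = (c_1(\cdot)c_2(p_1^{-1}\cdot),p_1p_2)$ recalled in the previous paragraph; once we identify the normaliser, the last sentence about self-normalisation will be a triviality.

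First I would compute the inverse: a routine manipulation gives $(c,p)^{-1} = (-c(p\cdot),p^{-1})$, where $-c$ denotes the pointwise inverse in $A$ (I will write $A$ multiplicatively and use the multiplicative convention throughout to avoid confusion). Then, for any $(c,p) \in A \wr B$ and any $b \in B$, identified with $(1,b)$, I would compute
\[
(c,p) \cdot (1,b) \cdot (c,p)^{-1} = \bigl(c(\cdot) \cdot c(pb^{-1}p^{-1}\cdot)^{-1}, \, pbp^{-1}\bigr),
\]
by applying the multiplication rule twice. The second coordinate lies in $B$ automatically, so the conjugate lies in $B \leq A \wr B$ if and only if the first coordinate is the trivial map, i.e.\ $c(x) = c(pb^{-1}p^{-1}x)$ for all $x \in B$.

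Next I would argue that $(c,p)$ normalises $B$ precisely when this equation holds simultaneously for every $b \in B$. As $b$ ranges over $B$, the element $y := pb^{-1}p^{-1}$ ranges over all of $B$, so the condition becomes $c(x) = c(yx)$ for all $x,y \in B$. Since left multiplication by $B$ on itself is transitive, this is equivalent to $c$ being a constant map $B \to A$. Conversely, any $(c,p)$ with $c$ constant satisfies the equation above for every $b$, hence normalises $B$. This identifies the normaliser as claimed.

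For the final assertion, I observe that every element of $\bigoplus_B A$ is, by definition, finitely supported, whereas a constant map $B \to A$ taking a non-trivial value has infinite support as soon as $B$ is infinite. Therefore, when $B$ is infinite, the only admissible constant map is the trivial one, forcing $(c,p) = (1,p) \in B$, so $N_{A \wr B}(B) = B$. No real obstacle is anticipated here; the only care required is the bookkeeping in the inverse and conjugation formulas, which must be tracked using the convention that $B$ acts on $\bigoplus_B A$ by the shift $(p\cdot c)(x) = c(p^{-1}x)$.
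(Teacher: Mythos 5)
Your proof is correct, but it takes a more computational route than the paper. You conjugate a general $(1,b)$ by a general $(c,p)$ explicitly, using the multiplication rule and the inverse formula $(c,p)^{-1}=(c(p\cdot)^{-1},p^{-1})$, and read off the functional equation $c(x)=c(pb^{-1}p^{-1}x)$; letting $b$ vary makes $c$ invariant under all left shifts, hence constant. The paper instead works (implicitly reducing to this case, since one may multiply by $p^{-1}\in B$, which normalises $B$) with a normalising element $g\in\bigoplus_B A$ and uses a commutator trick: $gbg^{-1}b^{-1}$ lies in $B$ because $gbg^{-1}\in B$, and in $\bigoplus_B A$ because the base group is normal, and these intersect trivially, so $g$ centralises $B$; invariance of $g$ under all coordinate shifts then forces its coordinates to be identical. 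The two arguments meet at the same last steps: shift-invariance means constancy, and a non-trivial constant map is not finitely supported when $B$ is infinite, which gives self-normalisation. Your version has the small advantage of handling an arbitrary element $(c,p)$ in one computation, at the cost of some inverse/conjugation bookkeeping that the paper's algebraic argument avoids. One half-sentence worth adding to your write-up: your criterion a priori only gives $(c,p)B(c,p)^{-1}\subseteq B$, but since that image is exactly $\{(1,pbp^{-1}) \mid b\in B\}=B$, the inclusion is automatically an equality, so the "precisely when'' claim is justified.
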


\begin{proof}
Let $g \in \bigoplus_BA$ be an element normalising $B$, i.e.\ $gBg^{-1}=B$. For every $b \in B$, observe that $gbg^{-1} \cdot b^{-1} \in gBg^{-1} \cdot B=B$ and that
$$g \cdot bg^{-1}b^{-1}  \in \bigoplus_BA \cdot \bigoplus_BA = \bigoplus_BA,$$
hence $gbg^{-1}b^{-1}=1$. In other words, $g$ commutes with all the elements of $B$. Because $B$ acts on $\bigoplus_BA$ by permuting the coordinates, it follows that all the coordinates of $g$ must be identical. When thought of as a map $B \to A$, this amounts to saying that $g$ is constant. 
\end{proof}

\begin{proof}[Proof of Corollary~\ref{cor:IntroAuto}.]
It follows from Proposition~\ref{prop:AutOneEnded} and Lemma~\ref{lem:Units} that
$$\mathrm{Aut}(A \wr B ) = \mathrm{Inn}(A \wr B ) \cdot A [B]^\times \cdot \mathrm{Aut}(B).$$
Because a conjugation by an element of $A \wr B$ decomposes as a product of a conjugation by an element of $\bigoplus_BA$ with a conjugation by an element of $B$, and that the latter also belongs to $\mathrm{Aut}(B)$, we have
$$\mathrm{Aut}(A \wr B ) = \bigoplus\limits_B A \cdot \mathrm{Aut}(B) \cdot A[B]^\times \cdot \mathrm{Aut}(B)= \bigoplus\limits_{B} A \cdot A [B]^\times \cdot \mathrm{Aut}(B),$$
where the second equality follows from the fact that $\mathrm{Aut}(B)$ normalises $A[B]^\times$, which implies $\mathrm{Aut}(B) \cdot A[B]^\times = A[B]^\times \cdot \mathrm{Aut}(B)$. Clearly, $\bigoplus_{B} A$ and $A [B]^\times$ intersect trivially and commute, so $\bigoplus_{B} A \cdot A [B]^\times = \bigoplus_{B} A \oplus A [B]^\times$. Also, observe that $\mathrm{Aut}(B)$ intersects trivially $\bigoplus_{B} A \cdot A [B]^\times$. Indeed, if $\bar{\varphi} = \iota(g) \circ \bar{u}$ for some automorphism $\varphi \in \mathrm{Aut}(B)$, some unit $u \in A[B]^\times$, and some element $g \in \bigoplus_BA$, then
$$\left( c \circ \varphi^{-1}, \varphi(p) \right) = \left( c \cdot u, gpg^{-1} \right) \text{ for all } (c,p) \in A \wr B.$$
Hence $\varphi(p)=gpg^{-1}$ for every $p \in B$. But we deduce from $B= \bar{\varphi}(B)=gBg^{-1}$ and from Fact~\ref{fact:SelfNormalising} that $g \in B$, and in fact that $g=1$ since we already know that $g$ belongs to $\bigoplus_BA$. Therefore, $\varphi$ must be the identity, justifying that $\mathrm{Aut}(B)$ indeed intersects trivially $\bigoplus_{B} A \cdot A [B]^\times$. Moreover, $\mathrm{Aut}(B)$ clearly normalises both $\bigoplus_{B} A$ and $A [B]^\times$. The desired decomposition of $\mathrm{Aut}(A \wr B)$ follows. 
\end{proof}

\noindent
Outside the one-ended case, the elementary automorphisms introduced so far are not sufficient to generate the automorphism group of the corresponding lamplighter group. It seems difficult to exhibit a structure that is both completely general and really informative. Nevertheless, one can push further.

\begin{description}
	\item[Partial conjugations.] Assume that $F$ is an abelian group and that $H$ splits as a free product $H_1 \ast H_2$. Given an $a \in \bigoplus_H F$, the map $$\kappa : s \mapsto \left\{ \begin{array}{cl} s & \text{if $s \in F \cup H_1$} \\ asa^{-1} & \text{if $s \in H_2$} \end{array} \right.$$ induces an automorphism of $F \wr H$.
\end{description} 
Here, $\kappa$ is only defined on generators of $F \wr H$, and it is naturally extended to words of generators. In order to justify that $\kappa$ induces a well-defined morphism $F \wr H \to F \wr H$, it suffices to show that a relator of $F,H_1,H_2$ or from the relative presentation
$$\left\langle F, H \mid \left[ hfh^{-1}, f \right] =1 \text{ where } f \in F, h \in H \backslash \{1\} \right\rangle$$
has always trivial image under $\kappa$. Clearly, $\kappa$ sends a relator of $F,H_1,H_2$ to the identity element, so the desired conclusion follows from the observation that $\kappa$ fixes $\bigoplus_H F$ pointwise. Indeed, let $f \in F$ and $h \in H$ be two elements. Decompose $h$ as an alternating product $a_1b_1 \cdots a_nb_n$ where $a_1,\ldots, a_n \in H_1$ and $b_1, \ldots, b_n \in H_2$. Observe that
$$\begin{array}{lcl} \kappa \left( a_1b_1 \cdot f \cdot b_1^{-1}a_1^{-1} \right) & = &  a_1 ab_1a^{-1} \cdot f \cdot ab_1^{-1} a^{-1}a_1^{-1}\\ \\ & = & a_1 ab_1 \cdot f \cdot b_1^{-1} a^{-1}a_1^{-1} = a_1b_1 \cdot f \cdot b_1^{-1}a_1, \end{array} $$
where the penultimate equality is justified by the fact that $F$ is abelian and where the last equality is justified by the fact that $b_1fb_1^{-1}$ commutes with $a$ since $\bigoplus_HF$ is abelian. By iterating, one shows that $\kappa \left( hfh^{-1} \right) = hfh^{-1}$, as desired. That the kernel of $\kappa$ is trivial is a direct consequence of the facts that $\kappa$ induces the identity on $H$ and restricts to the identity on $\bigoplus_HF$. Finally, the surjectivity of $\kappa$ follows from the easy observation that $F, H_1,H_2$ all lie in the image of $\kappa$. 
\begin{description}
	\item[Transvections.] Assume that $F$ is abelian and that $H$ splits as a free product $H' \ast Z$ where $Z$ is cyclic, say generated by $z \in Z$ of order $r \in \mathbb{N} \cup \{\infty\}$. Given an $a \in \bigoplus_H F$ such that $(az)^r=1$ if $r< \infty$, the map $$\kappa : s \mapsto \left\{ \begin{array}{cl} s & \text{if $s \in F \cup H'$} \\ as & \text{if $s=z$} \end{array} \right.$$ induces an automorphism of $F \wr H$.
\end{description}
Again, $\kappa$ is only defined on generators and is naturally extended to words of generators. In order to justify that $\kappa$ induces a well-defined automorphism of $F \wr H$, it suffices to reproduce the previous argument almost word for word, starting from the relative presentation
$$\left\langle F,H',z \mid z^r=1, \ \left[hfh^{-1},f \right]=1 \text{ where } f \in F, h \in \langle H',z \rangle \backslash\{1\} \right\rangle.$$
When $r$ is finite, the elements $a \in \bigoplus_H F$ satisfying $(az)^r=1$ are obtained in the following way. Decompose $\mathrm{supp}(a)$ as a disjoint union $S_1 \sqcup \cdots \sqcup S_k$ of $\langle z \rangle$-orbits. Write $a$ as $a_1 \cdots a_k$ such that $\mathrm{supp}(a_i)=S_i$ for every $1 \leq i \leq k$. Observe that
$$\begin{array}{lcl} (az)^r & = & a \cdot zaz^{-1} \cdots z^{r-1}az^{-r+1} \\ \\ & = & \left( a_1 \cdot za_1z^{-1} \cdots z^{r-1}a_1z^{-r+1} \right) \cdots \left( a_k \cdot za_kz^{-1} \cdots z^{r-1}a_kz^{-r+1} \right). \end{array}$$ 
Because $z$ has order $r$, each $S_i$ has size $r$ and $z$ acts on it through a cyclic shifting, hence 
$$a_i \cdot za_iz^{-1} \cdots z^{r-1}a_i z^{-r+1} = \prod\limits_{s \in S_i} s.$$
Thus, in order to construct $a$, we took a collection of $\langle z \rangle$-invariant subsets $S_1, \ldots, S_k \subset H$, and, for each $1 \leq i \leq k$, we coloured the lamps in $S_i$ such that the product of all the colours is trivial.

\medskip \noindent
We emphasize that the free product decompositions used in order to define partial conjugations and transvections may be trivial. For instance, $\mathbb{Z} \wr \mathbb{Z}$ has transvections (which turn out to generate the outer automorphism group up to finite index, see Proposition~\ref{prop:AutoCyclic} below).

\medskip \noindent
Thanks to these new families of automorphisms, we are now able to generalise Proposition~\ref{prop:AutOneEnded} to some multi-ended groups.

\begin{prop}\label{prop:AutFreeProduct}
Let $F$ be a finite cyclic group and $H$ a finitely presented group that decomposes as a free product of one-ended and cyclic groups. Then 
$$\mathrm{Aut}(F \wr H) = \left(\mathrm{PConj}(F \wr H) \cdot \mathrm{Trans}(F \wr H) \cdot F[H]^\times\right)\rtimes \mathrm{Aut}(H) .$$
\end{prop}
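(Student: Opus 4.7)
The plan is to extend the argument of Proposition~\ref{prop:AutOneEnded} by using partial conjugations, transvections, and automorphisms of $H$ to reduce an arbitrary automorphism of $F \wr H$ to one fixing $H$ pointwise, then to conclude via Lemma~\ref{lem:Units}. Write $H = H_1 \ast \cdots \ast H_n$ with each $H_i$ one-ended or cyclic, and let $\varphi \in \mathrm{Aut}(F \wr H)$. First, I would apply Theorem~\ref{thm:Main} to the restriction $\varphi|_{H_i}$ for each one-ended factor $H_i$: since $H_i$ is one-ended (hence does not semisplit), the second item of the theorem forces $\varphi(H_i)$ to be contained in a conjugate of $H$, with the alternative of a finite projection on $H$ excluded because $\bigoplus_H F$ is locally finite while $H_i$ is infinite. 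Applying Kurosh's subgroup theorem inside that conjugate of $H$ and using that one-ended groups are freely indecomposable, I obtain $\varphi(H_i) \subset g_i H_{\sigma(i)} g_i^{-1}$ for some indices $\sigma(i)$. Running the same reasoning on $\varphi^{-1}$ and then exploiting almost malnormality of one-ended free factors in $F \wr H$ (in the same spirit as the argument of Lemma~\ref{lem:AutoConjB}) upgrades the inclusion to an equality $\varphi(H_i) = g_i H_{\sigma(i)} g_i^{-1}$, with $\sigma$ a permutation of the one-ended indices.

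Next, I would post-compose $\varphi$ with an automorphism of $H$ realising the permutation $\sigma$, so that $\sigma$ becomes the identity, and then iteratively apply partial conjugations associated with the splittings $H = H_i \ast (\ast_{j \neq i} H_j)$ (together with inner automorphisms of $H$ absorbing the $H$-valued portion of the conjugators) to kill the elements $g_i$ factor by factor, ending with $\varphi(H_i) = H_i$ for every one-ended $H_i$. For each cyclic factor $H_j = \langle z_j \rangle$ of order $r$, the image $\varphi(z_j)$ is then an element of $F \wr H$ of order $r$; using transvections associated with the splitting $H = (\ast_{i \neq j} H_i) \ast \langle z_j \rangle$, which modify $z_j$ by left-multiplication by elements $a \in \bigoplus_H F$ satisfying $(az_j)^r = 1$, I would normalise $\varphi(z_j)$ so that it lies in $H$, and then a final automorphism of $H$ would bring $\varphi(z_j)$ to $z_j$ itself.

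At this point $\varphi$ fixes $H$ pointwise, so by characteristicness of $\bigoplus_H F$ in $F \wr H$ (as in the proof of Proposition~\ref{prop:AutOneEnded}) combined with Lemma~\ref{lem:Units}, $\varphi$ is induced by a unit of $F[H]$. Consequently every automorphism of $F \wr H$ lies in the product $\mathrm{PConj}(F \wr H) \cdot \mathrm{Trans}(F \wr H) \cdot F[H]^\times \cdot \mathrm{Aut}(H)$. To promote this to a semidirect-product decomposition I would verify, in the spirit of the final step of the proof of Corollary~\ref{cor:IntroAuto}, that $\mathrm{Aut}(H)$ normalises each of $\mathrm{PConj}(F \wr H)$, $\mathrm{Trans}(F \wr H)$, and $F[H]^\times$ via its natural action on the free product structure of $H$ and on the group ring $F[H]$, and that it intersects their product trivially.

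The main obstacle I expect lies in the second paragraph: a partial conjugation based on one splitting of $H$ can disturb the images of the other factors, so the inductive reduction to $\varphi(H_i) = H_i$ for every one-ended $i$ requires careful bookkeeping, most naturally carried out via the Bass--Serre tree of the full free product decomposition; and the order constraint $(az_j)^r = 1$ on transvection parameters when $r < \infty$ requires verifying that the lamp portion of $\varphi(z_j)$ one wishes to cancel really corresponds to an admissible $a$, which should follow from the explicit order computation recorded just after the definition of transvections.
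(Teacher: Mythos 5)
Your overall strategy (normalise $\varphi$ by elementary automorphisms until it fixes $H$ pointwise, then invoke Lemma~\ref{lem:Units}, then check the semidirect structure) is the right one, but as written there is a genuine gap, and it stems from a missing first move that the paper makes and you do not: since $\bigoplus_H F$ is characteristic, $\varphi$ induces an automorphism of $H \cong (F\wr H)/\bigoplus_H F$, and one should \emph{begin} by composing $\varphi$ with an element of $\mathrm{Aut}(H)$ so that the induced automorphism of $H$ is the identity. Once this is done, the projection of $\varphi(H_i)$ to $H$ is exactly $H_i$; combined with Theorem~\ref{thm:Main} (which, for a one-ended factor, places $\varphi(H_i)$ in a conjugate $gH_jg^{-1}$), this forces $j=i$ and $g \in (\bigoplus_H F)H_i$, so the conjugator can be taken in $\bigoplus_H F$ and a single partial conjugation (which fixes $F$, $\bigoplus_H F$ and all other factors) achieves $\varphi(H_i)\subset H_i$, whence $\varphi$ fixes $H_i$ pointwise because $\varphi(h)h^{-1}\in H_i\cap\bigoplus_H F=\{1\}$. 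For a cyclic factor, $\varphi(h_i)=ah_i$ with $a\in\bigoplus_H F$, the admissibility condition $(ah_i)^r=1$ is automatic since $\varphi$ preserves orders, and one transvection suffices. In particular, the corrections for distinct factors pairwise commute and never disturb one another, so the ``careful bookkeeping'' you flag as your main obstacle simply does not arise.

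Because you skip this reduction, several of your steps do not go through as stated. You need Kurosh plus an almost-malnormality claim for free factors inside $F\wr H$ (plausible, but an extra argument beyond Lemma~\ref{lem:AutoConjB}, which concerns $B$ itself), you need to realise a permutation $\sigma$ of the factors by an automorphism of $H$, and your conjugators $g_i$ have nontrivial $H$-parts, so absorbing them requires inner automorphisms of $H$ that conjugate \emph{all} factors and genuinely interfere with one another --- exactly the unresolved bookkeeping. More seriously, your transvection step is only valid when the $H$-part of $\varphi(z_j)$ is already $z_j$: if $\varphi(z_j)=c\,p$ with $p\neq z_j$, post-composing with a transvection $\mathrm{Trans}(a)$ yields $c\cdot\mathrm{Trans}(a)(p)$, and there is no reason an admissible $a$ cancels $c$; the order computation you invoke only settles admissibility in the case $p=z_j$. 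Finally, your conclusion ``at this point $\varphi$ fixes $H$ pointwise'' does not follow from $\varphi(H_i)=H_i$ setwise; you would still need to compose with the automorphism of $H$ restricting to $(\varphi|_{H_i})^{-1}$ on each factor. All of these issues disappear if you adopt the paper's ordering and kill the induced automorphism of $H$ first.
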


\noindent
We emphasize that, in this statement, the partial conjugations and the transvections are taken with respect to a fixed free product decomposition. 

\begin{proof}[Proof of Proposition~\ref{prop:AutFreeProduct}.]
Decompose $H$ as a free product $H_1 \ast \cdots \ast H_n$ of one-ended and cyclic groups, and let $\varphi\in \mathrm{Aut}(F \wr H)$ be an automorphism. Because $\bigoplus_H F$ is characteristic, up to composing $\varphi$ with an automorphism of $H$ we assume that $\varphi$ induces the identity on $H$. 

\medskip \noindent
Assume that there exists some $1 \leq i \leq n$ such that $H_i$ is one-ended. As a consequence of Theorem~\ref{thm:Main}, $\varphi(H_i)$ must lie in a conjugate of $H$. So there exist $1 \leq j \leq n$ and $g \in F \wr H$ such that $\varphi(H_i) \subset gH_jg^{-1}$. Since we know that the projection of $\varphi(H_i)$ on $H$ is $H_i$, necessarily $j=i$ and $g \in (\bigoplus_H F) H_i$. Therefore, $\varphi(H_i) \subset a H_i a^{-1}$ for some $a \in \bigoplus_H F$. By composing $\varphi$ with a partial conjugation, we can assume that $\varphi(H_i) \subset H_i$. Because $\varphi$ induces the identity on $H$, it follows that $\varphi$ fixes $H_i$ pointwise.

\medskip \noindent
Next, assume that there exists some $1 \leq i \leq n$ such that $H_i$ is cyclic. Fix a generator $h_i \in H_i$. Because $\varphi$ induces the identity on $H$, there must exist some $a \in \bigoplus_H F$ such that $\varphi(h_i)=ah_i$. Of course, $h_i$ and $ah_i$ have the same order, since $\varphi$ is an automorphism, so we can assume that $\varphi$ fixes $h_i$ up to composing it with a transvection.

\medskip \noindent
Thus, we have proved that, up to composing $\varphi$ with an automorphism of $H$ followed by a product of pairwise commuting partial conjugations and transvections, we can assume that $\varphi$ fixes $H$ pointwise. Then $\varphi$ is induced by an $H$-equivariant automorphism of $\bigoplus_H F$, and the desired conclusion follows from Lemma~\ref{lem:Units}.

\medskip \noindent
In other words, we have proved that $$\mathrm{Aut}(F \wr H) = \mathrm{Aut}(H) \cdot \mathrm{PConj}(F \wr H) \cdot \mathrm{Trans}(F \wr H) \cdot F[H]^\times.$$ Because $\mathrm{Aut}(H)$ normalises $\mathrm{PConj}(F \wr H)$, $\mathrm{Trans}(F \wr H)$, and $F[H]^\times$, and because it intersects trivially $\mathrm{PConj}(F \wr H) \cdot \mathrm{Trans}(F \wr H) \cdot F[H]^\times$, the desired decomposition follows.
\end{proof}

\noindent
Proposition~\ref{prop:AutFreeProduct} describes the automorphism groups of many interesting lamplighter groups, but we are still far away from a global picture. Let us conclude this section with a few open questions. 

\medskip \noindent
First, it is worth noticing that lamplighters over groups that are not finitely presented are not considered at all in the article. This is justified by the fact that admitting a finite presentation is fundamental in Theorem~\ref{thm:Main}. However, it is natural to ask what happens for infinitely presented groups. For instance:

\begin{question}
Let $F$ be a non-trivial finite group. Describe the automorphism group of the iterated wreath product $F \wr ( F \wr ( \cdots ( F \wr \mathbb{Z}) \cdots ))$.
\end{question}

\noindent
In the article, we mainly focused on lamps with only finitely many colours. Even though we saw that the proof of Proposition~\ref{prop:AutOneEnded} also provides interesting information outside this situation, many cases of interest are not covered. As a particular case:

\begin{question}
Describe the automorphism group of $\mathbb{F}_n \wr \mathbb{Z}$, $n \geq 2$.
\end{question}

\noindent
We excluded the case $n=1$ from the question because it can be easily solved, due to the fact that $\mathbb{Z}$ satisfies the unit Kaplansky conjecture over $\mathbb{Z}$. In fact, we can prove more generally that:

\begin{prop}\label{prop:AutoCyclic}
Let $Z$ be a cyclic group (finite or infinite). Then
$$\mathrm{Aut}(Z \wr \mathbb{Z})= Z[\mathbb{Z}]^\times \cdot \mathrm{Inn}(Z \wr \mathbb{Z}) \cdot \{ \mathrm{Trans}(z), z \in Z\} \cdot \langle \iota \rangle$$
where $\iota$ denotes the automorphism that inverts the $\mathbb{Z}$-factor and where $\mathrm{Trans}(z)$ denotes the transvection associated to $z$. Moreover, 
\begin{itemize}
	\item if $Z= \mathbb{Z}$, then $Z[\mathbb{Z}]^\times = \{ \pm X^n \mid n \in \mathbb{Z}\} = \langle X, -1 \rangle \simeq \mathbb{Z}\oplus \mathbb{Z}/2\mathbb{Z}$;
	\item if $Z= \mathbb{Z}/k\mathbb{Z}$ with $k=p_1^{r_1}\cdots p_n^{r_n}$ the prime decomposition of $k$, then
\[Z[\mathbb{Z}]^\times = \left\langle X,Z^\times,1+ p_1\ldots p_n Z \left[X,X^{-1}\right]  \right\rangle.\]
Hence $Z[\mathbb{Z}]^\times$ is finitely generated if and only if $r_1= \cdots = r_n=1$.
\end{itemize}
As a consequence, $\mathrm{Out}(Z \wr \mathbb{Z})$ is virtually infinite cyclic if $Z= \mathbb{Z}$, it is virtually $\mathbb{Z}^{n-1}$ if $Z$ has order a product of $n$ distinct primes, and it contains an infinitely generated abelian subgroup of finite index if the order of $Z$ is divisible by a square.  
\end{prop}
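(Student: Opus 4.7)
The plan is to derive Proposition~\ref{prop:AutoCyclic} by specialising the strategy of Proposition~\ref{prop:AutFreeProduct} to $H = \mathbb{Z}$, then explicitly analysing $Z[\mathbb{Z}]^\times$ and the inner automorphisms.

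First, I would observe that the proof of Proposition~\ref{prop:AutFreeProduct} goes through for any cyclic lamp group $Z$, including $Z = \mathbb{Z}$: the only use of finiteness was to ensure that $\bigoplus_H F$ is characteristic, and $\bigoplus_\mathbb{Z} \mathbb{Z}$ is also characteristic in $\mathbb{Z} \wr \mathbb{Z}$ (it is the unique maximal abelian normal subgroup of infinite rank in this metabelian group, while for $Z$ finite cyclic it is simply the torsion subgroup). Viewing $H = \mathbb{Z}$ as a single cyclic free factor, no partial conjugations appear, so the proof yields: every $\varphi \in \mathrm{Aut}(Z \wr \mathbb{Z})$ equals $\iota^\varepsilon \circ \mathrm{Trans}(a) \circ \bar{u}$ for some $\varepsilon \in \{0,1\}$, $a \in \bigoplus_\mathbb{Z} Z$, and $u \in Z[\mathbb{Z}]^\times$ (the last from Lemma~\ref{lem:Units}). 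Converting this into the statement requires identifying the inner automorphisms: conjugation by $b \in \bigoplus_\mathbb{Z} Z$ coincides with the transvection of parameter $b(tbt^{-1})^{-1}$, whose image as $b$ varies is exactly the augmentation kernel of $\bigoplus_\mathbb{Z} Z \to Z$, so transvections modulo this part of $\mathrm{Inn}$ are parametrised by $Z$ via $\{\mathrm{Trans}(z) : z \in Z\}$. Meanwhile, conjugation by $t^n$ acts on the base subgroup as multiplication by $X^n$, i.e.\ as the unit $X^n \in Z[\mathbb{Z}]^\times$. These observations combined give the stated decomposition of $\mathrm{Aut}(Z \wr \mathbb{Z})$.

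Next, I would compute $Z[\mathbb{Z}]^\times$. For $Z = \mathbb{Z}$, comparing highest and lowest degree coefficients in $fg = 1$ forces both to be monomials, yielding $\mathbb{Z}[\mathbb{Z}]^\times = \{\pm X^n\}$. For $Z = \mathbb{Z}/k\mathbb{Z}$ with $k = p_1^{r_1}\cdots p_n^{r_n}$, the Jacobson radical of $Z[X, X^{-1}]$ is the nilradical $p_1\cdots p_n \cdot Z[X, X^{-1}]$: the nilpotents of $\mathbb{Z}/k$ are the multiples of $p_1\cdots p_n$, the nilradical of a commutative polynomial-type ring is computed coefficientwise, and a direct analysis of maximal ideals (using CRT to reduce to the local case $\mathbb{Z}/p^r$) shows the Jacobson radical coincides with the nilradical here. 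The reduced quotient $\prod_i \mathbb{F}_{p_i}[X, X^{-1}]$ has unit group $(\prod_i \mathbb{F}_{p_i}^\times) \cdot X^\mathbb{Z}$, so lifting and adding $1 + J$ gives $Z[\mathbb{Z}]^\times = \langle X, Z^\times, 1 + p_1\cdots p_n Z[X, X^{-1}] \rangle$. The last factor is trivial exactly when all $r_i = 1$; when some $r_i \geq 2$, the identity $(1+a)(1+b) = 1 + a + b$ for $ab = 0$ (which holds as soon as $J^2 = 0$, with a filtration argument otherwise) shows that $\{1 + p_1\cdots p_n X^j\}_{j \in \mathbb{Z}}$ span an infinitely generated abelian subgroup.

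Finally, modulo $\mathrm{Inn}$ (which absorbs $X^\mathbb{Z} \leq Z[\mathbb{Z}]^\times$ and the augmentation-kernel transvections), the outer group $\mathrm{Out}(Z \wr \mathbb{Z})$ consists of $Z[\mathbb{Z}]^\times / X^\mathbb{Z}$, the transvection classes $\{\mathrm{Trans}(z) : z \in Z\} \cong Z$, and $\langle \iota \rangle$. For $Z = \mathbb{Z}$: $\mathbb{Z}[\mathbb{Z}]^\times / X^\mathbb{Z} \cong \mathbb{Z}/2$, so $\mathrm{Out}$ is virtually $\mathbb{Z}$. For $Z = \mathbb{Z}/k$ with $k$ a product of $n$ distinct primes: CRT gives $Z[\mathbb{Z}]^\times / X^\mathbb{Z} \cong (\prod_i \mathbb{F}_{p_i}^\times) \times \mathbb{Z}^{n-1}$ (the $\mathbb{Z}^{n-1}$ arising from $n$ independent $X^\mathbb{Z}$ factors modulo the diagonal), hence $\mathrm{Out}$ is virtually $\mathbb{Z}^{n-1}$. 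When some $r_i \geq 2$, the infinitely generated $1 + p_1\cdots p_n Z[X, X^{-1}]$ survives in $\mathrm{Out}$, providing the claimed infinitely generated abelian finite-index subgroup. The main obstacle I anticipate is the bookkeeping for $Z[\mathbb{Z}]^\times$ when $k$ has repeated prime factors, namely pinning down the Jacobson radical, verifying the generating set, and controlling the finite-generation of $1 + J$; everything else is a direct application of Proposition~\ref{prop:AutFreeProduct} and Lemma~\ref{lem:Units}.
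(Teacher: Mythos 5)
Your reduction of $\mathrm{Aut}(Z \wr \mathbb{Z})$ to $\langle \iota \rangle$, transvections, and $Z[\mathbb{Z}]^\times$, together with your identification of the inner contributions (conjugation by a base element is the transvection whose parameter ranges over the augmentation kernel, conjugation by $t^n$ is the unit $X^n$), is the same argument as the paper's, which proceeds directly rather than through Proposition~\ref{prop:AutFreeProduct} but with identical content; the $Z=\mathbb{Z}$ case and the non-finite-generation argument when some $r_i \geq 2$ are also essentially the paper's.

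The genuine gap is in your computation of $Z[\mathbb{Z}]^\times$ for $Z = \mathbb{Z}/k\mathbb{Z}$ with $k$ divisible by at least two primes. The unit group of the reduced quotient $\prod_i \mathbb{F}_{p_i}[X,X^{-1}]$ is the product $\prod_i \left( \mathbb{F}_{p_i}^\times \times X^{\mathbb{Z}} \right)$, not $\left( \prod_i \mathbb{F}_{p_i}^\times \right) \cdot X^{\mathbb{Z}}$: it contains monomial units whose exponents differ from factor to factor. Concretely, for $k=6$ the element $3X+4$ is a unit of $(\mathbb{Z}/6\mathbb{Z})[X,X^{-1}]$ (its inverse is $3X^{-1}+4$; under CRT it corresponds to $(X,1)$), yet it does not lie in $\langle X, Z^\times, 1+6\,Z[X,X^{-1}] \rangle$, which consists only of monomials since $1+6\,Z[X,X^{-1}]=\{1\}$ for squarefree $k$. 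So your lifting step does not establish the displayed generating set, and your own final paragraph implicitly contradicts it: the ``virtually $\mathbb{Z}^{n-1}$'' count for squarefree $k$ uses $n$ independent $X^{\mathbb{Z}}$ factors modulo the diagonal, which is precisely what the generating set you claim to have proved would fail to produce. (The same subtlety affects the source: the paper's own argument asserts that a unit has an invertible coefficient after reduction modulo $p_1\cdots p_n$, which again only holds when the exponents agree across all primes, e.g.\ when $k$ is a prime power.) To make the proposal correct and internally consistent, you should first treat the prime-power case, where your argument does give $(\mathbb{Z}/p^r\mathbb{Z})[X,X^{-1}]^\times = \langle X, (\mathbb{Z}/p^r\mathbb{Z})^\times, 1+p\,(\mathbb{Z}/p^r\mathbb{Z})[X,X^{-1}] \rangle$, then assemble $Z[X,X^{-1}]^\times \cong \prod_i (\mathbb{Z}/p_i^{r_i}\mathbb{Z})[X,X^{-1}]^\times$ by CRT, and rederive the finite-generation criterion and the $\mathrm{Out}$ consequences from that description.
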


\noindent
In this statement, we identify $Z[\mathbb{Z}]$ with $Z[X,X^{-1}]$. Automorphisms of lamplighter groups $(\text{finite cyclic group}) \wr \mathbb{Z}$ have been already considered in the literature; see for instance \cite{MR918632, MR3438162, MR4308638}. We include a short and elementary proof for completeness.

\begin{proof}[Proof of Proposition~\ref{prop:AutoCyclic}.]
Let $\varphi$ be an automorphism of $Z \wr \mathbb{Z}$. Because $\bigoplus_\mathbb{Z} Z$ is characteristic, $\varphi$ induces an isomorphism $\mathbb{Z} \to \mathbb{Z}$, which we can assume to be trivial up to composing $\varphi$ with $\iota$. Consequently, there exists some $a \in \bigoplus_\mathbb{Z} Z$ such that $\varphi(z)=az$ where $z$ denotes a generator of the $\mathbb{Z}$-factor. Up to composing $\varphi$ with a transvection, we assume that $a=1$. So $\varphi$ fixes pointwise the $\mathbb{Z}$-factor. In other words, $\varphi$ has to be a $\mathbb{Z}$-equivariant automorphism of $\bigoplus_\mathbb{Z} Z$, which amounts to saying that $\varphi$ belongs to $Z[\mathbb{Z}]^\times$ according to Lemma~\ref{lem:Units}. Thus, we have proved that
$$\mathrm{Aut}(Z \wr \mathbb{Z})= Z[\mathbb{Z}]^\times \cdot \mathrm{Trans}(Z \wr \mathbb{Z}) \cdot \langle \iota \rangle.$$
The desired composition of $\mathrm{Aut}(Z \wr \mathbb{Z})$ now follows from the following observation:

\begin{claim}\label{claim:TransInn}
We have $\mathrm{Trans}(Z \wr \mathbb{Z}) \subset \mathrm{Inn}(Z \wr \mathbb{Z} ) \cdot \{ \mathrm{Trans}(z) , z \in Z\}$. 
\end{claim}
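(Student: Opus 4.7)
The plan is first to describe the set of transvections concretely. Since $\mathbb{Z}$ admits no non-trivial free-product decomposition, the only decomposition to consider is $\mathbb{Z} = \{1\} \ast \mathbb{Z}$, so every transvection of $Z \wr \mathbb{Z}$ has the form $\kappa_a$: the automorphism fixing $\bigoplus_\mathbb{Z} Z$ pointwise and sending a fixed generator $t$ of $\mathbb{Z}$ to $at$, for some $a \in \bigoplus_\mathbb{Z} Z$ (no exponent condition arises since the cyclic factor is $\mathbb{Z}$, so $r = \infty$). In this notation, $\mathrm{Trans}(w)$ for $w \in Z$ is the special case $\kappa_{e_0^w}$, where $e_0^w$ denotes the element of $\bigoplus_\mathbb{Z} Z$ supported at $0$ with value $w$. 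So the claim reduces to: for every $a \in \bigoplus_\mathbb{Z} Z$, there are $g \in Z \wr \mathbb{Z}$ and $w \in Z$ with $\kappa_a = \iota_g \circ \mathrm{Trans}(w)$.

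I will look for $g$ inside the abelian subgroup $\bigoplus_\mathbb{Z} Z$, so that conjugation by $g$ does not disturb the copy of $Z$ fixed by the transvections. A direct semidirect-product computation shows that, writing the group law on $\bigoplus_\mathbb{Z} Z$ additively, for any $b \in \bigoplus_\mathbb{Z} Z$ the inner automorphism $\iota_b$ fixes $\bigoplus_\mathbb{Z} Z$ pointwise (since that subgroup is abelian) and sends $t \mapsto (b - t \cdot b)\,t$, where $t \cdot b$ denotes the shift action of $t$ on $b$. Identifying $\bigoplus_\mathbb{Z} Z$ with $Z[\mathbb{Z}] = Z[X,X^{-1}]$ in such a way that the $\mathbb{Z}$-shift becomes multiplication by $X$, this rewrites as $\iota_b(t) = ((1-X)b)\,t$. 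Composing with $\mathrm{Trans}(w)$, which fixes lamps and sends $t$ to $e_0^w t$, yields an automorphism that fixes lamps and sends $t \mapsto ((1-X)b + w)\,t$. Hence the claim reduces to the following algebraic assertion: every $a \in Z[X,X^{-1}]$ decomposes as $(1-X)b + w$ for some $b \in Z[X,X^{-1}]$ and $w \in Z$ (viewed as a constant).

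The key point is that the image of multiplication by $1-X$ on $Z[X,X^{-1}]$ equals the kernel of the augmentation $\epsilon : Z[X,X^{-1}] \to Z$, $\sum a_i X^i \mapsto \sum a_i$. The inclusion $\mathrm{Im}(1-X) \subseteq \ker \epsilon$ is immediate. For the reverse inclusion, given $c = \sum c_i X^i \in \ker \epsilon$, set $b_i := -\sum_{j > i} c_j$; this is a finite sum, and the relation $\sum_j c_j = 0$ forces $b_i = 0$ for $i \ll 0$, so $b$ is finitely supported, and by construction $b_i - b_{i-1} = c_i$, i.e.\ $(1-X)b = c$. Applying this with $w := \epsilon(a)$ and $c := a - w$ produces the required $b$, finishing the reduction. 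The only real obstacle is bookkeeping: choosing the normalisation $g \in \bigoplus_\mathbb{Z} Z$ so that conjugation does not move the lamps, and recognising that the appropriate constant $w$ is the augmentation of $a$; after that, the elementary fact that $\mathrm{Im}(1-X) = \ker \epsilon$ on $Z[\mathbb{Z}]$ closes the argument.
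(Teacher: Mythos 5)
Your argument is correct and is essentially the paper's proof: the paper likewise observes that conjugating by $b \in \bigoplus_\mathbb{Z} Z$ multiplies the generator of the $\mathbb{Z}$-factor on the left by $b$ minus its shift, constructs $b$ by the same partial-sum formula, and identifies the obstruction as the sum-of-coordinates morphism $\rho$ (your augmentation $\epsilon$), concluding that $\mathrm{Trans}(g)$ is inner exactly when $\rho(g)=0$. Your phrasing via $\mathrm{Im}(1-X)=\ker\epsilon$ in $Z[X,X^{-1}]$ is just the group-ring packaging of that coordinate computation.
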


\noindent
Let $\rho : \bigoplus_\mathbb{Z} Z \to Z$ denote the morphism that computes the sum of the coordinates. We claim that, for every $g \in \bigoplus_\mathbb{Z} Z$, the transvection $\mathrm{Trans}(g)$ is an inner automorphism if and only if $g \in \mathrm{ker}(\rho)$. This observation implies Claim~\ref{claim:TransInn}. 

\medskip \noindent
Fix a $g \in \bigoplus_\mathbb{Z} Z$ and let $z$ be a generator of the $\mathbb{Z}$-factor of $Z \wr \mathbb{Z}$. Then $\mathrm{Trans}(g)$ is an inner automorphism if and only if there exists $a \in \bigoplus_\mathbb{Z} Z$ such that $g \cdot z = aza^{-1}$, or equivalently $g=aza^{-1}z^{-1}$ since $aza^{-1} = aza^{-1} z^{-1} \cdot z$. In other words, we are trying to find an element $a \in \bigoplus_\mathbb{Z} Z$ such that, by adding $a$ to the $z$-shift of by its inverse, we obtain $g$. There is only one possibility: the leftmost coordinate of $a$ has to coincide with the leftmost coordinate of $g$, say $a_i=g_i$; next, $a_{i+1}$ has to be $g_{i+1}+g_i$; and $a_{i+2}$ has to be $g_{i+2}+g_{i+1}+g_i$, and so on. A priori, the element $a$ thus defined belong to $\prod_{\mathbb{Z}} Z$. It belongs to $\bigoplus_{\mathbb{Z}} Z$ if and only if $\rho(g)=0$. This concludes the proof of Claim~\ref{claim:TransInn}. 

\medskip \noindent
Now, we focus on the structure of $Z[\mathbb{Z}]^\times$, which we identify with $Z[X,X^{-1}]^\times$. The first item of our statement is a direct consequence of the following observation:

\begin{claim}\label{claim:UnitsIntegral}
If $R$ is an integral domain, then 
$$R\left[ X,X^{-1} \right]^\times= \{ r X^n \mid r \in R^\times, n \in \mathbb{Z} \} = \langle R^\times, X \rangle \simeq R^\times \oplus \mathbb{Z}.$$
\end{claim}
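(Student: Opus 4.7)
The plan is to reduce the claim to the main assertion that every unit $u \in R[X,X^{-1}]^\times$ is a Laurent monomial $rX^n$ with $r \in R^\times$ and $n \in \mathbb{Z}$. Once this is established, the set equality with $\langle R^\times, X \rangle$ is immediate, and the isomorphism $\langle R^\times, X \rangle \simeq R^\times \oplus \mathbb{Z}$ follows by checking that the map $(r,n) \mapsto rX^n$ is a group homomorphism (by commutativity of $R[X,X^{-1}]$), is surjective onto the set of monomials, and has trivial kernel since distinct pairs yield distinct Laurent polynomials.

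For the main assertion, I would use a Newton-polygon style argument on the extreme degrees. Given a unit $u$ with inverse $v$, I would write
\[u = \sum_{i=a}^{b} r_i X^i, \qquad v = \sum_{j=c}^{d} s_j X^j,\]
with $a \leq b$, $c \leq d$, and the four extreme coefficients $r_a, r_b, s_c, s_d$ all non-zero. Because $R$ is an integral domain, the products $r_a s_c$ and $r_b s_d$ are non-zero, so the lowest- and highest-degree terms of $uv$ are exactly $r_a s_c X^{a+c}$ and $r_b s_d X^{b+d}$. The equation $uv = X^0$ then forces $a+c = 0 = b+d$, and combined with $a \leq b$ and $c \leq d$ this collapses to $a = b$ and $c = d$. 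Hence $u = r_a X^a$ is a monomial, and $r_a \in R^\times$ because $r_a s_c = 1$.

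There is essentially no obstacle here; the argument is short and elementary, and the only place where the hypothesis is used is in ensuring that the extreme coefficients of $uv$ do not cancel. This hypothesis is genuinely necessary, as witnessed by the non-monomial unit $1+2X$ of $(\mathbb{Z}/4\mathbb{Z})[X]$ with inverse $1-2X$, which shows the statement fails once $R$ has non-trivial nilpotents.
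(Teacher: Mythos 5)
Your proof is correct and uses essentially the same idea as the paper: in an integral domain the extreme-degree coefficients of a product cannot vanish, so $uv=1$ forces both $u$ and $v$ to be monomials with unit coefficients. The paper phrases this as ``a product with a non-monomial factor cannot be monomial,'' but the argument is the same, and your $(\mathbb{Z}/4\mathbb{Z})[X]$ remark is a nice (correct) aside showing the hypothesis is needed.
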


\noindent
Finding the units in $R[X,X^{-1}]$ amounts to finding the divisors of the $X^n$, $n\in \mathbb{N}$, in $R[X]$. But, because $R$ is an integral domain, the product of two polynomials such that at least of them is not monomial cannot be monomial since the products of the terms of highest and lowest degrees provide at least two terms. This proves Claim~\ref{claim:UnitsIntegral}. 

\medskip \noindent
Next, let us proves the second item of our statement. Clearly $X$ and elements of $Z^\times$ are invertible. On the other hand, since $p_1\ldots p_n$ is nilpotent, $J=p_1\ldots p_n Z \left[X,X^{-1}\right]$ is contained in the nilradical of $Z \left[X,X^{-1}\right]$, and therefore $1+a$ is invertible for all $a\in J$. 
Conversely, let $b\in Z \left[X,X^{-1}\right]^\times$. Reducing modulo $p_1\ldots p_k$, we see that at least one coefficient is invertible. On multiplying it by a power of $X$ and an element of $Z^\times$, we may assume that 
$b=1+a$, where the constant coefficient of $a$ equals $0$. Reducing modulo $p_i$ and applying Claim \ref{claim:UnitsIntegral} with $R=\mathbb{Z}/p_i\mathbb{Z} $, we deduce that $p_i$ divide $a$. Hence $p_1\ldots p_n$ divides $a$, and we are done.

\medskip \noindent Let us check that if some $r_i\geq 2$, then $Z \left[X,X^{-1}\right]^\times$ is not finitely generated. Denote $p=p_i$. Projecting to  $\mathbb{Z}/p_i^2\mathbb{Z}\left[X,X^{-1}\right]$, we are left to treating the case where $Z=\mathbb{Z}/p^2\mathbb{Z}$. Denote $A=Z \left[X,X^{-1}\right]$. 
Note that for all $x,y\in A$, $(1+px)(1+py)=1+p(x+y)$. This implies that $A^\times\cap A_m=Z^\times+pA_m$ is a subgroup of $A^\times$, where  $A_m$ is the set of polynomials of degree at most $m$ in $X,X^{-1}$. Hence, if $A^\times$ was finitely generated, then the degree of its elements would be bounded: contradiction.

\medskip \noindent
Finally, the description of $\mathrm{Out}(Z\wr \mathbb{Z})$ follows from the description of $\mathrm{Aut}(Z\wr \mathbb{Z})$ together with the observation that $Z[\mathbb{Z}]^\times \cap \mathrm{Inn}(Z \wr \mathbb{Z}) = \langle X \rangle$. 
\end{proof}

\noindent
For wreath products of the form $(\text{finite group})\wr ( \text{multi-ended group})$, we only considered the case where the finite group is abelian, because otherwise the partial conjugations and transvections may not be well-defined. This suggests the following problem.

\begin{question}
Let $F$ be a non-abelian finite group and $n \geq 2$ an integer. Describe the automorphism group of $F \wr \mathbb{F}_n$.
\end{question}

\noindent
The difficulty in extending Proposition~\ref{prop:AutFreeProduct} to lamplighters over arbitrary multi-ended groups comes from the torsion, which may introduce sufficient flexibility in order to define new types of automorphisms.
\begin{description}
	\item[Generalised partial conjugations.] Assume that $F$ is abelian and that $H$ splits as a graph of groups with underlying graph $\Gamma$. Fix a marked vertex $v_0 \in V(\Gamma)$, an orientation on $\Gamma$, and, for every edge $e \in E(\Gamma)$, an element $a_e \in C(H_e) \cap \bigoplus_H F$. Every path in $\Gamma$ is naturally labelled by the element obtained by reading the oriented edges it passes through. Assume that the label of every cycle is trivial. For every vertex $v \in V(\Gamma)$, let $a_v \in \bigoplus_H F$ denote the label of a path from $v_0$ to $v$. (As a consequence of our previous assumption, this label does not depend on the path we chose.) Fix a maximal subtree $T \subset \Gamma$, let $e_1, \ldots, e_k$ denote the oriented edges outside $T$, and let $t_1, \ldots, t_k$ denote the corresponding stable letters. The map
$$\kappa : s \mapsto \left\{ \begin{array}{cl} s & \text{if $s \in F$} \\ a_vsa_v^{-1} & \text{if $s \in H_v$} \\ a_{o(e_i)}t_i a_{o(e_i)}^{-1} & \text{if $s=t_i$} \end{array} \right.$$ induces an automorphism of $F \wr H$. 
\end{description}
The proof is in the same spirit as before, but we use Bass-Serre theory in order to find a relative presentation of $H$. We do not include the details here. Similarly, one can define generalised transvections.

\medskip \noindent
Understanding such automorphisms in full generality turns out to be tricky since they seem to depend on the structures of the finite groups involved and on how they interact together. As a test question, we propose:

\begin{question}
Let $A,B$ be two non-trivial finite groups and $C \leq B$ a subgroup. Describe the automorphism group of $A \wr (B \ast_C B)$. 
\end{question}

\appendix

\section{A word about approximating infinitely presented groups}

\noindent
The main idea underlying the article is that a group $H$ admitting a presentation of the form $\langle X \mid R,r_1,r_2, \ldots \rangle$, where $R$ is a set of relations and where $r_1,r_2,\ldots$ is an infinite sequence of relations, can be ``approximated'' by the groups $H_i$ defined by the truncated presentations $\langle X \mid R, r_1, \ldots, r_i \rangle$. The proof of Theorem~\ref{thm:Main} is an algebraic illustration of this idea applied to wreath products. This appendix is a general discussion dedicated to this strategy from different points of view and for other infinitely presented groups.

\paragraph{Algebraic approximation.} The fact that the $H_i$ algebraically approximate, in a sense, the group $H$ is illustrated by the following (well-known) observation:

\begin{fact}\label{fact:Algebra}
For every finitely presented group $G$ and every morphism $\varphi : G \to H$, there exist an $i \geq 1$ and a morphism $\psi : G \to H_i$ such that $\varphi = \pi_i \circ \psi$. 
\end{fact}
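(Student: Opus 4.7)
The plan is to use the standard recipe based on von Dyck's theorem, exploiting the finiteness of both the generating set and the relating set of $G$.

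First I would fix a finite presentation $\langle s_1, \ldots, s_n \mid w_1, \ldots, w_m \rangle$ of $G$, which exists by hypothesis. For each generator $s_j$, since $X$ generates $H$, I can pick a word $u_j$ written over $X^{\pm 1}$ such that $\varphi(s_j) = u_j$ in $H$. Then I would use $u_1, \ldots, u_n$ to define a candidate map $\psi : G \to H_i$ on generators by setting $\psi(s_j) = u_j$ in $H_i$, for some index $i$ to be chosen later. By construction, if $\psi$ is a well-defined homomorphism, then $\pi_i \circ \psi = \varphi$.

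The main point is then to choose $i$ large enough that $\psi$ is actually a homomorphism. Let $F(X)$ denote the free group on $X$ and let $N_\infty \lhd F(X)$ be the normal closure of $R \cup \{r_1, r_2, \ldots\}$, so that $H = F(X)/N_\infty$. For each relator $w_k$ of $G$, consider the element $W_k := w_k(u_1, \ldots, u_n) \in F(X)$ obtained by substituting the $u_j$'s into $w_k$. Since $\varphi$ is a homomorphism, $W_k$ projects to $1$ in $H$, so $W_k \in N_\infty$. By definition of normal closure, each $W_k$ is a finite product of conjugates of elements of $R \cup \{r_1, r_2, \ldots\}^{\pm 1}$. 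Hence there exists some integer $i_k$ such that $W_k$ already lies in the normal closure $N_{i_k}$ of $R \cup \{r_1, \ldots, r_{i_k}\}$. Setting $i := \max(i_1, \ldots, i_m)$, we obtain $W_k \in N_i$ for every $k$, i.e.\ $w_k(u_1, \ldots, u_n) = 1$ in $H_i$.

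Finally, von Dyck's theorem then guarantees that the assignment $s_j \mapsto u_j$ extends to a well-defined homomorphism $\psi : G \to H_i$, and by construction $\pi_i \circ \psi$ coincides with $\varphi$ on the generators of $G$, hence on all of $G$. There is no real obstacle here; the only subtle point, which is essentially tautological, is the passage from ``$W_k \in N_\infty$'' to ``$W_k \in N_i$ for some finite $i$'', which uses that each $W_k$ is a single element of the free group and therefore only invokes finitely many of the relators $r_1, r_2, \ldots$ in its expression as a product of conjugates.
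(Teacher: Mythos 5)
Your argument is correct and is essentially the paper's own: the paper only sketches this Fact by pointing to the proof of Lemma~\ref{lem:Approx}, which is exactly your von Dyck recipe (represent the images of the generators by words, note that each relator of $G$ lies in the normal closure of $R$ together with finitely many of the $r_i$, pass to the maximal index, and invoke von Dyck). Your explicit normal-closure bookkeeping in the free group just spells out the step the paper phrases as ``the relations of a finite presentation of $G$ can be deduced from finitely many relations''.
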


\noindent
The proof follows the same lines as Lemma~\ref{lem:Approx}, which is a particular case. See for instance \cite[Lemma~3.1]{MR2764930}. Interestingly, the $H_i$ may satisfy properties that are quite different from the properties satisfied by $H$. For instance, applying the approximation given in Section~\ref{section:Proof} to the lamplighter group $H=\mathbb{Z}/2\mathbb{Z} \wr \mathbb{Z}$ yields virtually non-abelian free groups $H_i$, even though $H$ is solvable. This implies that every finitely presented group admitting $\mathbb{Z}/2\mathbb{Z} \wr \mathbb{Z}$ as a quotient must virtually surject onto $\mathbb{F}_2$, as already mentioned in the introduction. Similar statements can be obtained for infinitely presented groups that are not wreath products, as shown below.

\paragraph{Geometric approximation.} Fact~\ref{fact:Algebra} admits a geometric version, justifying that the $H_i$ also provide geometric approximations of $H$. More precisely:

\begin{fact}\label{fact:Geometry}
Let $X$ be a coarsely $1$-connected graph. For every coarse (resp. quasi-isometric) embedding $\rho : X \to H$, there exist an $i \geq 1$ and a coarse (resp. quasi-isometric) embedding $\eta : X \to H_i$ such that $\rho = \pi_i \circ \eta$. 
\end{fact}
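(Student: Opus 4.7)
The argument is a geometric incarnation of Fact~\ref{fact:Algebra}: I would lift $\rho$ by transporting it along paths in $X$ and use coarse $1$-connectedness to guarantee path-independence modulo finitely many of the $r_j$'s. Fix a basepoint $x_0 \in V(X)$. Left-translating $\rho$ by $\rho(x_0)^{-1}$, which preserves both the coarse and quasi-isometric classes, we may assume $\rho(x_0)=1$. Since $\rho$ is a coarse embedding of a graph, there is a constant $D$ such that for every edge $\{x,x'\}$ of $X$ the element $\rho(x)^{-1}\rho(x')$ of $H$ is represented by a word $w_{x,x'}$ of length at most $D$ in the generators; the same word then represents a lift of $\rho(x)^{-1}\rho(x')$ in each $H_i$ under $\pi_i$.

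For each $x \in V(X)$ I would fix a combinatorial path $\gamma_x$ from $x_0$ to $x$ in $X$ and set $\eta_i(x) \in H_i$ to be the product of the $w_{\cdot,\cdot}$ read along $\gamma_x$. By construction $\pi_i \circ \eta_i = \rho$ on $V(X)$, and adjacent vertices of $X$ are sent to elements of $H_i$ at distance at most $D$. Because $\pi_i$ is $1$-Lipschitz, any lower bound on distances satisfied by $\rho$ is inherited by $\eta_i$, while the upper bound comes from the construction itself; hence, as soon as $\eta_i$ is proved to be well-defined (independently of $\gamma_x$, up to bounded error), it is automatically a coarse embedding, and a quasi-isometric one when $\rho$ is.

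The heart of the argument is showing path-independence for $i$ large enough. By coarse $1$-connectedness, there exists $L>0$ such that any two combinatorial paths in $X$ with the same endpoints differ by a finite sequence of insertions and deletions of loops of length at most $L$. Such an $L$-loop is sent under the lifting procedure to a word of length at most $DL$ representing the identity in $H$, and all $\rho$-images of $L$-loops lie, up to left-translation, in a uniformly bounded ball of $H$. Assuming local finiteness of the relevant part of the Cayley graph of $H$, there are only finitely many such loop-words $w^{(1)}, \ldots, w^{(N)}$. Each $w^{(j)}$ is a finite product of conjugates of relators from $R \cup \{r_1, r_2, \ldots\}$, involving only finitely many $r_k$'s altogether; for $i$ larger than the maximal index used, every $w^{(j)}$ is trivial in $H_i$ as well. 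So $\eta_i$ is well defined for such $i$, and the previous paragraph then yields the conclusion.

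\textbf{Main obstacle.} The delicate point is reducing the a priori infinitely many $L$-loops in $X$ to a \emph{finite} family of loop-words in $H$, since only finitely many relators can be absorbed into any given $H_i$. This reduction is exactly what local finiteness of the ambient Cayley graph provides; in situations where the generating set is infinite (e.g.\ $A \wr B$ generated by $A \cup B$), one has to isolate the finitely many generators actually appearing in these bounded-length loop-words and run the argument on the locally finite subcomplex they span.
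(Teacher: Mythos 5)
Your argument is correct and is essentially the expected one: the paper gives no proof of this fact, simply citing \cite[Lemma~2.2]{GTwreath}, and the argument there runs exactly as yours does --- lift $\rho$ edge-by-edge along chosen paths, use coarse $1$-connectedness to reduce path-independence (equivalently, the Lipschitz upper bound for the lift) to killing the finitely many bounded-length words that are trivial in $H$, each of which is a product of conjugates of finitely many relators and hence dies in $H_i$ for $i$ large, while the lower distance bound is inherited through the $1$-Lipschitz projection $\pi_i$. Your closing caveat is also the right one: the finiteness of the set of loop-words needs the metric to come from a finite (or at least locally finite) generating structure, which holds in the paper's applications, where one rewrites over a finite generating set before lifting.
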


\noindent
Recall that a graph $X$ is \emph{coarsely $1$-connected} if there exists an $R \geq 0$ such that the $2$-complex obtained from $X$ by filling in with discs all the cycle of length $\leq R$ is simply connected. If $X$ is a Cayley graph of a finitely generated group $G$ (constructed from a finite generating set), then $X$ is coarsely $1$-connected precisely when $G$ is finitely presented. 

\medskip \noindent
Fact~\ref{fact:Geometry} is central in the proof of the embedding theorem obtained in \cite[Theorem~5.1]{GTwreath}; see \cite[Lemma~2.2]{GTwreath}.

\paragraph{Logic approximation.} One can also observe that, in the space of marked groups, the $(H_n,X)$ converge to $(H,X)$. Because convergence in the space of marked groups is connected to first-order properties, one obtains the following observation:

\begin{fact}\label{fact:Logic}
Let $G$ be a finitely presented group and $S \subset G$ a finite generating set. If $\mathrm{Th}_\forall(G) \supset \mathrm{Th}_\forall (H)$, then $(G,S)$ is a limit of marked groups $(K_n,S_n)$ where each $K_n$ is both a subgroup of the truncation $H_n$ and a quotient of $G$.
\end{fact}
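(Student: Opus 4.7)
The plan is to read the hypothesis $\mathrm{Th}_\forall(G) \supset \mathrm{Th}_\forall(H)$ as the dual inclusion $\mathrm{Th}_\exists(G) \subset \mathrm{Th}_\exists(H)$ (by contraposing each universal sentence), to use this to embed the $R$-ball of the Cayley graph of $(G,S)$ into $H$, then to push this embedding into a truncation $H_n$ by means of Fact~\ref{fact:Algebra}, and finally to reindex so that the subscript of $K_n$ matches that of $H_n$, up to passing to a subsequence.

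More concretely, for every $R \geq 0$ let $T_R$ and $U_R$ denote the finite sets of words of length at most $R$ in $S \cup S^{-1}$ that are, respectively, trivial and non-trivial in $G$. The existential sentence
\[
\Sigma_R \; : \; \exists (x_s)_{s \in S}, \; \bigwedge_{w \in T_R} w(x_s) = 1 \; \wedge \; \bigwedge_{w \in U_R} w(x_s) \neq 1
\]
is realised in $G$ by $(s)_{s \in S}$, hence by the hypothesis it is also realised by some tuple $\bar{s}^{(R)} \in H^S$. As soon as $R$ is at least the length of the longest relator of a fixed finite presentation of $G$, the assignment $s \mapsto s^{(R)}$ extends to a well-defined morphism $\varphi_R \colon G \to H$. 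Applying Fact~\ref{fact:Algebra} to $\varphi_R$ produces an index $m(R)$ and a morphism $\psi_R \colon G \to H_{m(R)}$ with $\varphi_R = \pi_{m(R)} \circ \psi_R$. Setting $K_R := \psi_R(G) \leq H_{m(R)}$ with generating tuple $S_R := \psi_R(S)$, the marked group $(K_R, S_R)$ is a quotient of $(G,S)$ and a subgroup of the truncation $H_{m(R)}$. Its ball of radius $R$ coincides with that of $(G,S)$: relations in $T_R$ are killed by $\psi_R$ by construction, whereas for $w \in U_R$, the equality $\pi_{m(R)}(\psi_R(w)) = \varphi_R(w) = w(\bar{s}^{(R)}) \neq 1$ in $H$ forces $\psi_R(w) \neq 1$ in $H_{m(R)}$.

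Replacing $m$ by its running maximum so that it is non-decreasing, the sequence $(K_{m(R)}, S_{m(R)})_{R \geq 1}$ is a sequence of marked groups, each of which is a subgroup of the corresponding truncation and a quotient of $G$, and which converges to $(G,S)$ in the space of marked groups since the radius of agreement with $(G,S)$ tends to infinity with $R$.

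The main obstacle is that the integer $m(R)$ produced by Fact~\ref{fact:Algebra} is not a controlled function of $R$: in particular, there is no reason for $m(R)$ to equal $R$, nor for every truncation $H_n$ to be visited by the construction. For this reason, the matching between the subscripts of $K_n$ and $H_n$ in the statement must be read as holding along a cofinal subsequence of the natural numbers, which is all that is needed to declare $(G,S)$ a limit in the space of marked groups.
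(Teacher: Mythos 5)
Your argument is correct, but it follows a genuinely different route from the paper. The paper's proof is a two-line combination of quoted results: \cite[Proposition~5.3]{MR2151593} turns the inclusion $\mathrm{Th}_\forall(G) \supset \mathrm{Th}_\forall(H)$ into the statement that $(G,S)$ is a limit of marked subgroups of $H$, a diagonal argument with the convergence $H_i \to H$ replaces subgroups of $H$ by subgroups of the truncations $H_n$, and \cite[Lemma~2.3]{MR2151593} (a neighbourhood of a finitely presented marked group consists of its marked quotients) upgrades the approximating groups to quotients of $G$. You instead unfold the first-order hypothesis by hand: the existential sentence $\Sigma_R$ encoding the radius-$R$ ball of $(G,S)$ transfers to $H$, von Dyck's theorem produces a morphism $\varphi_R : G \to H$ realising that ball, and Fact~\ref{fact:Algebra} lifts it to a truncation. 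This is essentially a self-contained proof of the relevant part of Proposition~5.3 of \cite{MR2151593}, and it has the pleasant feature that the approximating groups are quotients of $G$ \emph{by construction}, so the neighbourhood lemma is not needed at all. Your closing caveat about the index $m(R)$ is more cautious than necessary: since $\pi_{m(R)}$ factors as $H_{m(R)} \twoheadrightarrow H_n \twoheadrightarrow H$ for every $n \geq m(R)$, composing $\psi_R$ with the canonical quotient $H_{m(R)} \twoheadrightarrow H_n$ yields, for \emph{every} $n \geq m(R)$, a subgroup of $H_n$ that is a quotient of $G$ and still agrees with $(G,S)$ on the ball of radius $R$ (triviality is preserved by the quotient, non-triviality is detected in $H$); so the matching of indices in the statement can be achieved on the nose rather than only along a cofinal subsequence. (Note in passing that the ``running maximum'' relabelling alone would not achieve this, since a subgroup of $H_{m(R)}$ does not embed in $H_n$ for $n > m(R)$; it is the composition with the quotient maps that does the work.) The paper's own proof is subject to the same bookkeeping, and in the application (Proposition~\ref{prop:Logic}) only membership in \emph{some} truncation is used, so nothing is lost either way.
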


\noindent
We refer to \cite{MR2151593} for details about marked groups and universal theories of groups. Nevertheless, let us recall that, given a group $G$, its \emph{universal theory} $\mathrm{Th}_\forall (G)$ refers to the set of all the universal first-order formulae, i.e.\ the sentences of the form 
$$\forall x_1, \ldots, x_n \in G, \bigvee\limits_{i=1}^p \left( \bigwedge\limits_{j =1}^q w_{ij}( x_1, \ldots, x_n) = 1 \wedge \bigwedge_{k=1}^r w_{ik}( x_1, \ldots, x_n) \neq 1 \right)$$ 
with the $w_{ij},w_{ik}$ words written over $\{x_1, \ldots, x_n\}^{\pm 1}$, that are satisfied by $G$. 

\begin{proof}[Proof of Fact~\ref{fact:Logic}.]
According to \cite[Proposition~5.3]{MR2151593}, the inclusion $\mathrm{Th}_\forall(G) \supset \mathrm{Th}_\forall (H)$ implies that $(G,S)$ is a limit of marked subgroups of $H$ in the space of marked groups. But we also know that $H$ (endowed with an arbitrary marking) is the limit of the $H_i$ (for some marking). Consequently, $(G,S)$ is a limit of marked subgroups $K_n \leq H_n$. Since we know that some neighbourhood of $(G,S)$ contains only marked quotients of $G$ \cite[Lemma~2.3]{MR2151593}, we can suppose without loss of generality that $H_n$ is also a quotient of $G$ for every $n \geq 0$. 
\end{proof}

\noindent
As an illustration, let us show that no finitely presented group have the same universal theory as, for instance, the lamplighter group $\mathbb{Z} \wr \mathbb{Z}$. 

\begin{prop}\label{prop:Logic}
Let $A,B$ be two free abelian groups of finite rank $\geq 1$, and let $G$ be a finitely generated group. If $G$ and $A \wr B$ have the same universal theory, then $G$ is not finitely presented.
\end{prop}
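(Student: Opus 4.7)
The plan is to derive a contradiction from the assumption that $G$ is finitely presented.

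First, I extract two basic consequences of the equality $\mathrm{Th}_\forall(G) = \mathrm{Th}_\forall(A \wr B)$. Since $A$ is abelian, the wreath product $A \wr B$ is metabelian; this is captured by the universal sentence $\forall x_1, x_2, x_3, x_4,\ [[x_1,x_2],[x_3,x_4]]=1$, so $G$ is metabelian as well and, in particular, contains no non-abelian free subgroup. On the other hand, because $A$ and $B$ both have rank $\geq 1$, the wreath product $A \wr B$ is non-abelian, and the existential sentence $\exists x, y,\ [x,y]\neq 1$ then forces $G$ to be non-abelian too.

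Next, I invoke \cite[Proposition~5.3]{MR2151593}, the key model-theoretic input also used in the proof of Fact~\ref{fact:Logic}: the inclusion $\mathrm{Th}_\forall(G) \supset \mathrm{Th}_\forall(A\wr B)$ implies that $(G,S)$ is a limit, in the space of marked groups, of marked subgroups $(K_n, S_n)$ of $A \wr B$. Because $G$ is finitely presented, some neighbourhood of $(G,S)$ in marked group space consists of marked groups that are quotients of $G$ (see \cite[Lemma~2.3]{MR2151593}); thus for every sufficiently large $n$ the group $K_n$ is simultaneously a subgroup of $A \wr B$ and a quotient of $G$. Denote by $\varphi_n : G \twoheadrightarrow K_n \hookrightarrow A \wr B$ the resulting morphism.

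Now I apply Theorem~\ref{thm:Main} to each $\varphi_n$. Since $G$ is metabelian, $\mathrm{ker}(\varphi_n)$ cannot contain a non-abelian free subgroup, which rules out the first item of Theorem~\ref{thm:Main}. It follows that either (i) the projection of $\varphi_n(G)$ on $B$ is finite, hence trivial (as $B$ is torsion-free), and $\varphi_n(G) \subset \bigoplus_B A$; or (ii) $\varphi_n(G) \cap \bigoplus_B A = \{1\}$, in which case the projection $\varphi_n(G) \to B$ is injective and identifies $\varphi_n(G)$ with a subgroup of $B$. In both cases, $K_n = \varphi_n(G)$ is abelian.

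To conclude, for every pair of generators $s_i, s_j \in S$, the commutator $[s_i, s_j]$ is trivial in each abelian $K_n$; by marked convergence, $[s_i, s_j]$ is then trivial in $G$, so $G$ is abelian, contradicting the non-abelianness established in the first paragraph. The most delicate step is the initial translation from the universal-theoretic hypothesis to a convergence statement for marked groups, for which \cite[Proposition~5.3]{MR2151593} is exactly the right tool; the rest of the argument is a direct application of Theorem~\ref{thm:Main} combined with the torsion-freeness of $B$.
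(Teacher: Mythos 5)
Your proof is correct, but it takes a different route from the paper's. You use only the first half of the model-theoretic input (Proposition~5.3 and Lemma~2.3 of \cite{MR2151593}) to write $(G,S)$ as a limit of marked groups $K_n$ that are simultaneously quotients of $G$ and subgroups of $A \wr B$ itself, and then you feed the resulting morphisms $\varphi_n : G \to A \wr B$ into Theorem~\ref{thm:Main}: since a metabelian $G$ has no non-abelian free subgroup in $\ker(\varphi_n)$, the first item's hypothesis must fail, and torsion-freeness of $B$ upgrades ``finite projection'' to ``trivial projection'', so each $K_n$ is abelian (either inside $\bigoplus_B A$ or embedded in $B$), whence $G$ is abelian by marked convergence, a contradiction. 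The paper instead invokes Fact~\ref{fact:Logic} in full, so the $K_n$ live in the truncations $A \square_{R_n} B = \Gamma_{R_n}A \rtimes B$, and it then re-runs, inside these graph products, the support/Tits-alternative argument (via \cite[Proposition~3.10 and Theorem~4.1]{MR3365774}) to show each metabelian non-abelian $K_n$ would contain a non-abelian free subgroup. Your version is shorter and cleanly reuses Theorem~\ref{thm:Main} as a black box; the paper's version is more self-contained at this point (it does not rely on the main theorem and serves as a direct illustration of the truncation-approximation philosophy of the appendix), at the cost of repeating part of the machinery from Section~\ref{section:Proof}. Both arguments use the same facts about $A$ and $B$ (metabelian, non-abelian, $B$ torsion-free), so there is no loss of generality in your approach.
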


\begin{proof}
Notice that like $A \wr B$, $G$ is metabelian but not abelian, since these properties can be expressed as universal or existential formulae. According to Fact~\ref{fact:Logic}, if $G$ is finitely presented then $(G,S)$ can be described as a limit of marked groups $(K_n,S_n)$, where $S \subset G$ is an arbitrary finite generating set and where each $K_n$ is both a quotient of $G$ and a subgroup of $A \square_{R_n} B$ for some $R_n \subset B$. Up to ignoring finitely many $K_n$, we assume without loss of generality that $K_n$ is metabelian but non-abelian for every $n$. 

\medskip \noindent
Observe that, if the projection of $K_n$ on $B$ is trivial for some $n$, then $K_n$ defines a non-abelian subgroup in the right-angled Artin group $\Gamma_{R_n}A$, which implies that $K_n$ must contain a non-abelian free subgroup. But this is impossible since $K_n$ is metabelian. Thus, the projection of $K_n$ on $B$ must be infinite for every $n$. 

\medskip \noindent
Next, observe that, for every $n$, the intersection $K_n \cap \Gamma_{R_n}A$ is non-trivial. Otherwise, $K_n$ would inject into $B$ and would be abelian.

\medskip \noindent
Finally, fix some $n$ and fix an enumeration $\{g_1,g_2, \ldots \}$ of $K_n \cap \Gamma_{R_n}A$. For every $i \geq 1$, set $K_n^{\leq i} := \langle g_1, \ldots, g_i \rangle$ and let $\Lambda_i$ denote the smallest subgroup in $\Gamma_{R_n}$ such that $K_n^{\leq i}$ lies in a conjugate of $\langle \Lambda_i \rangle \leq \Gamma_{R_n}$ (such a subgraph exists according to \cite[Proposition~3.10]{MR3365774} and it is finite). Because $B$ acts metrically properly on $\Gamma_{R_n}$ and because $K_n$ has an infinite projection on $B$, the sequence $\Lambda_1 \subset \Lambda_2 \subset \cdots$ does not stabilise. This implies that $K_n \cap \Gamma_{R_n}A$ is not finitely generated. Also, since $\Gamma_S$ is locally finite, there must exist an index $i \geq1$ such that $\Lambda_i$ contains at least two vertices and is not a join. We deduce from \cite[Theorem~4.1]{MR3365774} that $K_n \cap \Gamma_{R_n}A$ has to contain a non-abelian free subgroup, a contradiction. 
\end{proof}

\paragraph{Other infinitely presented groups.} At first glance, it might be surprising that a finitely presented group that surjects onto a wreath product $(\text{non-trivial})\wr (\text{infinite})$ must be SQ-universal, even though the wreath product itself may be very far from being SQ-universal (e.g. it may be solvable). As described in Section~\ref{section:Proof}, the reason comes from the fact that wreath products can be approximated by groups satisfying properties of negative curvature, and the SQ-universality is deduced from such properties. 

\medskip \noindent
Interestingly, such a phenomenon is not specific to wreath products and seems to be rather common. For instance, \cite{MR3061134} exhibits many similar examples among metabelian groups and branch groups. In the particular case of the Grigorchuk group $\mathfrak{G}$, the structure of the approximating groups is completely explicit. Recall from \cite{MR819415, MR767207} that $\mathfrak{G}$ admits as a presentation
$$\langle a,b,c,d \mid a^2=b^2=c^2=d^2=bcd=1, u_n=v_n=1 \ (n \geq 0) \rangle$$
where the words $u_n,v_n$ are defined inductively as
$$\left\{ \begin{array}{l} u_0=(ad)^4  \text{ and } v_0=(adacac)^4 \\ u_n=\sigma^n(u_0) \text{ and } v_n=\sigma^n(v_0) \text{ for all } n \geq 0 \end{array} \right.$$
from the substitution $\sigma$ defined by 
$$\sigma(a)=aca, \ \sigma(b)=d, \ \sigma(c)=b, \ \sigma(d)=c.$$
Then, for every $n \geq 0$, the group $\mathfrak{G}_n$ defined by the truncated presentation
$$\left\langle a,b,c,d \left| \begin{array}{l} a^2=b^2=c^2=d^2=bcd=1 \\ u_0= \cdots =u_n=v_0= \cdots =v_{n-1}=1 \end{array} \right. \right\rangle$$
turns out to be virtually a product of free groups (see \cite[Theorem~5.3]{MR3061134} for a more precise statement). Let us mention two applications of this point of view:

\begin{prop}
For every finitely presented group $G$ and every morphism $\varphi : G \to \mathfrak{G}$, if the image of $\varphi$ is not virtually abelian, then $G$ has to be large.
\end{prop}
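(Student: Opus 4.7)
The plan is to mimic the strategy used for Theorem~\ref{thm:Main}, replacing the approximating semidirect products $A \square_S B$ by the explicit truncations $\mathfrak{G}_n$ of the Grigorchuk presentation described just above the statement. By Fact~\ref{fact:Algebra} applied to this infinite presentation of $\mathfrak{G}$, there exist $n \geq 0$ and a morphism $\psi : G \to \mathfrak{G}_n$ with $\varphi = \pi_n \circ \psi$, where $\pi_n : \mathfrak{G}_n \twoheadrightarrow \mathfrak{G}$ is the canonical projection. Quotients of virtually abelian groups are virtually abelian, so the assumption that $\varphi(G)=\pi_n(\psi(G))$ is not virtually abelian forces $\psi(G)$ itself to fail being virtually abelian.

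Next, invoke \cite[Theorem~5.3]{MR3061134}: the truncation $\mathfrak{G}_n$ admits a finite-index subgroup $N_n$ which is isomorphic to a direct product of free groups $F^{(1)} \times \cdots \times F^{(k)}$. Set $H := \psi^{-1}(N_n)$; it has finite index in $G$, and $\psi(H)=\psi(G) \cap N_n$ is a non-virtually-abelian (hence in particular non-abelian) subgroup of $F^{(1)} \times \cdots \times F^{(k)}$. Let $p_i : F^{(1)} \times \cdots \times F^{(k)} \to F^{(i)}$ denote the coordinate projections. Since the combined map $\psi(H) \hookrightarrow \prod_i p_i(\psi(H))$ is injective, if every $p_i(\psi(H))$ were cyclic (i.e.\ abelian, as subgroups of free groups), then $\psi(H)$ would embed in an abelian group, a contradiction. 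Hence some $p_i(\psi(H))$ is non-abelian; being a subgroup of $F^{(i)}$, it is a non-abelian free group, and so surjects onto $\mathbb{F}_2$.

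Composing with $\psi$ produces a surjection $H \twoheadrightarrow \mathbb{F}_2$ from the finite-index subgroup $H \leq G$, which is exactly the definition of largeness. The only potentially delicate point in the argument is the appeal to \cite[Theorem~5.3]{MR3061134} for the structure of the $\mathfrak{G}_n$: once this virtually-product-of-free-groups description is accepted, the Tits-alternative-style projection argument above is elementary. The rest is purely formal, in complete analogy with how Lemma~\ref{lem:Approx} feeds into the proof of Theorem~\ref{thm:Main}.
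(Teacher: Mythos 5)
Your proposal is correct and follows essentially the same route as the paper: factor $\varphi$ through a truncation $\mathfrak{G}_n$ via Fact~\ref{fact:Algebra}, then use the fact from \cite[Theorem~5.3]{MR3061134} that $\mathfrak{G}_n$ is virtually a product of free groups. The only difference is that you spell out, via the coordinate projections, the dichotomy ``subgroups of a virtually (product of free groups) are virtually abelian or large'' which the paper invokes without detail, and your verification of it is accurate.
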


\begin{proof}
As a consequence of Fact~\ref{fact:Algebra}, $\varphi$ factors through a morphism $\psi : G \to \mathfrak{G}_n$ for some $n \geq 0$. Because $\mathfrak{G}_n$ is virtually a product of free groups, $\psi(G)$ is either virtually abelian or large. In the former case, the image of $\varphi$ in $\mathfrak{G}$ has to be virtually abelian; and in the latter case, $G$ has to be large. 
\end{proof}

\begin{prop}
A finitely generated nilpotent group quasi-isometrically embeds into the Grigorchuk group $\mathfrak{G}$ if and only if it is virtually abelian.
\end{prop}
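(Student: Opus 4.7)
The forward direction carries the substantive content and follows the algebro-geometric approximation strategy developed in this appendix. Let $N$ be a finitely generated nilpotent group and $\rho : N \to \mathfrak{G}$ a quasi-isometric embedding. Because every finitely generated nilpotent group is finitely presented, the Cayley graph of $N$ (for any finite generating set) is coarsely $1$-connected. Fact~\ref{fact:Geometry} then supplies an integer $n \geq 0$ and a quasi-isometric embedding $\eta : N \to \mathfrak{G}_n$ factoring $\rho$. Since $\mathfrak{G}_n$ is virtually a product of finitely many free groups, it is quasi-isometric to a product $F_1 \times \cdots \times F_k$ of free groups, so composition yields a quasi-isometric embedding $N \to F_1 \times \cdots \times F_k$.

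The remaining task is to show that a finitely generated nilpotent group which quasi-isometrically embeds into a product of free groups must be virtually abelian. The plan is to pass to asymptotic cones. By Pansu's theorem, $\mathrm{Cone}(N)$ is a homogeneous Carnot group $G_N$, and $G_N$ is abelian (isomorphic to some $\mathbb{R}^d$) if and only if $N$ is virtually abelian. The asymptotic cone of each free group $F_i$ is an $\mathbb{R}$-tree $T_i$, and since asymptotic cones commute with finite products, the QI-embedding induces a bi-Lipschitz embedding
\[G_N \hookrightarrow T_1 \times \cdots \times T_k.\]
Every $\mathbb{R}$-tree is a median space and therefore isometrically embeds into $L^1$, and a finite product of $L^1$-embeddable spaces bi-Lipschitz embeds into $L^1$ by summing coordinates. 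Composing, one obtains a bi-Lipschitz embedding of the Carnot group $G_N$ into $L^1$. By the Cheeger--Kleiner theorem on the non-embeddability of non-abelian Carnot groups into $L^1$, the group $G_N$ must be abelian, and consequently $N$ is virtually abelian.

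As for the converse, finite nilpotent groups QI-embed trivially, while exhibiting quasi-isometric embeddings of $\mathbb{Z}^k$ into $\mathfrak{G}$ (so that the biconditional is nontrivial in this direction) amounts to constructing quasi-flats of the appropriate rank inside $\mathfrak{G}$; this is a positive geometric statement orthogonal to the approximation method and would rely instead on explicit geometric features of $\mathfrak{G}$.

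\textbf{Main obstacle.} The hardest step is the appeal to the Cheeger--Kleiner non-embedding theorem, which is analytically delicate. A more self-contained argument would replace it by a direct proof that no non-abelian Carnot group, and in particular no Heisenberg-type asymptotic cone, admits a bi-Lipschitz embedding into a product of $\mathbb{R}$-trees; this more restrictive target space ought to be tractable by a combinatorial or coarse-geometric argument exploiting the tree-like structure and comparing it with the quadratic distortion of commutators in the Carnot group.
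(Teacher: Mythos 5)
Your forward direction is correct in outline but takes a genuinely different route from the paper. Both arguments start identically, using Fact~\ref{fact:Geometry} to factor the embedding through some $\mathfrak{G}_n$ and hence, up to quasi-isometry, through a finite product of free groups. From there the paper does not pass to asymptotic cones at all: it realises the nilpotent group as a uniform lattice in its Malcev completion and applies Pauls' theorem \cite{MR1883722}, which says that a non-abelian simply connected nilpotent Lie group admits no quasi-isometric embedding into any CAT(0) space; a finite product of trees (with the $\ell^2$ product metric) is CAT(0), so the argument closes in one step at the quasi-isometric level. Your cone argument is workable, but note two things. First, the theorem you invoke as ``Cheeger--Kleiner'' is, as they proved it, about the Heisenberg group; the statement for arbitrary non-abelian Carnot groups requires the later metric-differentiation extensions of their work, so the citation as written overstates what that single theorem gives. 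Second, your ``main obstacle'' dissolves: you only need non-embeddability of non-abelian Carnot groups into finite products of $\mathbb{R}$-trees, and such a product is itself CAT(0), so Pauls' theorem applies directly and the whole $L^1$/median detour (and indeed the blow-up to cones) can be skipped. What your approach buys is independence from the CAT(0) literature at the price of heavier analytic input; the paper's route is shorter and avoids Pansu's theorem.

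The genuine gap is the converse. The proposition is an equivalence, and you leave the ``if'' direction as a remark about quasi-flats without proof. It is easy to fill, and the only ``explicit geometric feature'' of $\mathfrak{G}$ needed is that it is infinite and commensurable with $\mathfrak{G} \times \mathfrak{G}$. Since $\mathfrak{G}$ is an infinite finitely generated (torsion!) group, $\mathbb{Z}$ quasi-isometrically embeds into it via a bi-infinite geodesic in its Cayley graph (it cannot embed as a subgroup, so this geodesic is genuinely what you need). Hence $\mathbb{Z}^n$ quasi-isometrically embeds into $\mathfrak{G}^n$, which is quasi-isometric to $\mathfrak{G}$ by iterating the commensurability with the square; as every finitely generated virtually abelian group is quasi-isometric to some $\mathbb{Z}^n$, the converse follows. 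This is exactly the paper's one-line argument, and without it (or something equivalent) your proof only establishes one half of the statement.
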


\begin{proof}
Because a finitely generated nilpotent group is automatically finitely presented, it follows from Fact~\ref{fact:Geometry} that such a group that quasi-isometrically embeds in the Grigorchuk group must quasi-isometrically embed in a product of finitely many free groups. On the other hand, a finitely generated nilpotent group can be realised as a uniform lattice in a connected simply-connected nilpotent Lie group \cite{MR0028843}; and such nilpotent Lie group does not quasi-isometrically embed in a CAT(0) space such as a product of trees according to \cite{MR1883722}, unless it is abelian. We conclude that a finitely generated nilpotent group that quasi-isometrically embeds in the Grigorchuk group must be virtually abelian. Conversely, because the Grigorchuk is commensurable to its own square, $\mathbb{Z}^n$ quasi-isometrically embeds into $\mathfrak{G}$ for every $n \geq 1$, so every finitely generated virtually abelian group quasi-isometrically embeds in the Grigorchuk group. 
\end{proof}

\addcontentsline{toc}{section}{References}

\bibliographystyle{alpha}
{\footnotesize\bibliography{MorphismWreath}}

\def\polhk#1{\setbox0=\hbox{#1}{\ooalign{\hidewidth
  \lower1.5ex\hbox{`}\hidewidth\crcr\unhbox0}}}
\begin{thebibliography}{BBDM87}

\bibitem[AM15]{MR3365774}
Y.~Antol\'{\i}n and A.~Minasyan.
\newblock Tits alternatives for graph products.
\newblock {\em J. Reine Angew. Math.}, 704:55--83, 2015.

\bibitem[Bau93]{MR1243634}
G.~Baumslag.
\newblock {\em Topics in combinatorial group theory}.
\newblock Lectures in Mathematics ETH Z\"{u}rich. Birkh\"{a}user Verlag, Basel,
  1993.

\bibitem[BBDM87]{MR918632}
S.~Bachmuth, G.~Baumslag, J.~Dyer, and H.~Mochizuki.
\newblock Automorphism groups of two generator metabelian groups.
\newblock {\em J. London Math. Soc. (2)}, 36(3):393--406, 1987.

\bibitem[BGdlH13]{MR3061134}
M.~Benli, R.~Grigorchuk, and P.~de~la Harpe.
\newblock Amenable groups without finitely presented amenable covers.
\newblock {\em Bull. Math. Sci.}, 3(1):73--131, 2013.

\bibitem[BH72]{MR310046}
R.~Burns and V.~Hale.
\newblock A note on group rings of certain torsion-free groups.
\newblock {\em Canad. Math. Bull.}, 15:441--445, 1972.

\bibitem[BMW94]{quasimedian}
H.-J. Bandelt, H.M. Mulder, and E.~Wilkeit.
\newblock Quasi-median graphs and algebras.
\newblock {\em J. Graph Theory}, 18(7):681--703, 1994.

\bibitem[CDH10]{medianviewpoint}
I.~Chatterji, C.~Dru{\c{t}}u, and F.~Haglund.
\newblock Kazhdan and {H}aagerup properties from the median viewpoint.
\newblock {\em Advances in Mathematics}, 225:882--921, 2010.

\bibitem[CFP96]{MR1426438}
J.~Cannon, W.~Floyd, and W.~Parry.
\newblock Introductory notes on {R}ichard {T}hompson's groups.
\newblock {\em Enseign. Math. (2)}, 42(3-4):215--256, 1996.

\bibitem[CG05]{MR2151593}
C.~Champetier and V.~Guirardel.
\newblock Limit groups as limits of free groups.
\newblock {\em Israel J. Math.}, 146:1--75, 2005.

\bibitem[Che00]{mediangraphs}
V.~Chepoi.
\newblock Graphs of some {$\rm CAT(0)$} complexes.
\newblock {\em Adv. in Appl. Math.}, 24(2):125--179, 2000.

\bibitem[CK11]{MR2764930}
Y.~Cornulier and A.~Kar.
\newblock On property ({FA}) for wreath products.
\newblock {\em J. Group Theory}, 14(1):165--174, 2011.

\bibitem[Cor06]{MR2295543}
Y.~Cornulier.
\newblock Finitely presented wreath products and double coset decompositions.
\newblock {\em Geom. Dedicata}, 122:89--108, 2006.

\bibitem[Cou14]{MR3211906}
R.~Coulon.
\newblock On the geometry of {B}urnside quotients of torsion free hyperbolic
  groups.
\newblock {\em Internat. J. Algebra Comput.}, 24(3):251--345, 2014.

\bibitem[DGO17]{DGO}
F.~Dahmani, V.~Guirardel, and D.~Osin.
\newblock Hyperbolically embedded subgroups and rotating families in groups
  acting on hyperbolic spaces.
\newblock {\em Mem. Amer. Math. Soc.}, 245(1156):v+152, 2017.

\bibitem[Gar21]{Kaplansky}
G.~Gardam.
\newblock A counterexample to the unit conjecture for group rings.
\newblock {\em arXiv:2102.11818, to appear in Ann. of Math.}, 2021.

\bibitem[Gen17]{Qm}
A.~Genevois.
\newblock Cubical-like geometry of quasi-median graphs and applications to
  geometric group theory.
\newblock {\em PhD thesis, arxiv:1712.01618}, 2017.

\bibitem[Gen18]{AutG}
A.~Genevois.
\newblock Automorphisms of graph products of groups and acylindrical
  hyperbolicity.
\newblock {\em arXiv:1807.00622}, 2018.

\bibitem[Gen19a]{MR4028832}
A.~Genevois.
\newblock Acylindrical hyperbolicity from actions on {$\rm CAT(0)$} cube
  complexes: a few criteria.
\newblock {\em New York J. Math.}, 25:1214--1239, 2019.

\bibitem[Gen19b]{MR3918481}
A.~Genevois.
\newblock Hyperbolic and cubical rigidities of {T}hompson's group {$V$}.
\newblock {\em J. Group Theory}, 22(2):313--345, 2019.

\bibitem[Gen19c]{GPvanKampen}
A.~Genevois.
\newblock On the geometry of van {K}ampen diagrams of graph products of groups.
\newblock {\em arXiv:1901.04538}, 2019.

\bibitem[GM19]{GM}
A.~Genevois and A.~Martin.
\newblock Automorphisms of graph products of groups from a geometric
  perspective.
\newblock {\em Proceedings of the London Mathematical Society},
  119(6):1745--1779, 2019.

\bibitem[Gre90]{GreenGP}
E.~Green.
\newblock Graph products of groups.
\newblock {\em PhD Thesis}, 1990.

\bibitem[GT21]{GTwreath}
A.~Genevois and R.~Tessera.
\newblock Asymptotic geometry of lamplighters over one-ended groups.
\newblock {\em arxiv:2105.04878}, 2021.

\bibitem[Hig51]{MR38348}
G.~Higman.
\newblock A finitely generated infinite simple group.
\newblock {\em J. London Math. Soc.}, 26:61--64, 1951.

\bibitem[Kat19]{MR4028980}
M.~Kato.
\newblock On groups whose actions on finite-dimensional {$\rm CAT(0)$} spaces
  have global fixed points.
\newblock {\em J. Group Theory}, 22(6):1089--1099, 2019.

\bibitem[KK51]{MR49892}
M.. Krasner and L.~Kaloujnine.
\newblock Produit complet des groupes de permutations et probl\`eme d'extension
  de groupes. {III}.
\newblock {\em Acta Sci. Math. (Szeged)}, 14:69--82, 1951.

\bibitem[Lys85]{MR819415}
I.~Lys\"{e}nok.
\newblock A set of defining relations for the {G}rigorchuk group.
\newblock {\em Mat. Zametki}, 38(4):503--516, 634, 1985.

\bibitem[Lyu84]{MR767207}
A.~Lyul'ko.
\newblock Normal subgroups of {A}bels groups.
\newblock {\em Mat. Zametki}, 36(3):289--294, 1984.

\bibitem[Mal49]{MR0028843}
A.~Mal'cev.
\newblock Nilpotent torsion-free groups.
\newblock {\em Izvestiya Akad. Nauk. SSSR. Ser. Mat.}, 13:201--212, 1949.

\bibitem[MO15]{MinasyanOsin}
A.~Minasyan and D.~Osin.
\newblock Acylindrical hyperbolicity of groups acting on trees.
\newblock {\em Mathematische Annalen}, 362(3):1055--1105, 2015.

\bibitem[MS21]{MR4308638}
F.~Matucci and P.~Silva.
\newblock Extensions of automorphisms of self-similar groups.
\newblock {\em J. Group Theory}, 24(5):857--897, 2021.

\bibitem[Neu64]{MR188280}
P.~Neumann.
\newblock On the structure of standard wreath products of groups.
\newblock {\em Math. Z.}, 84:343--373, 1964.

\bibitem[Osa18]{MR3786300}
D.~Osajda.
\newblock Group cubization.
\newblock {\em Duke Math. J.}, 167(6):1049--1055, 2018.
\newblock With an appendix by M. Pichot.

\bibitem[Osi02]{MR1958994}
D.~Osin.
\newblock Kazhdan constants of hyperbolic groups.
\newblock {\em Funktsional. Anal. i Prilozhen.}, 36(4):46--54, 2002.

\bibitem[Osi16]{OsinAcyl}
D.~Osin.
\newblock Acylindrically hyperbolic groups.
\newblock {\em Trans. Amer. Math. Soc.}, 368:851--888, 2016.

\bibitem[Pas85]{MR798076}
D.~Passman.
\newblock {\em The algebraic structure of group rings}.
\newblock Robert E. Krieger Publishing Co., Inc., Melbourne, FL, 1985.
\newblock Reprint of the 1977 original.

\bibitem[Pau01]{MR1883722}
S.~Pauls.
\newblock The large scale geometry of nilpotent {L}ie groups.
\newblock {\em Comm. Anal. Geom.}, 9(5):951--982, 2001.

\bibitem[Rol98]{Roller}
M.~Roller.
\newblock Pocsets, median algebras and group actions; an extended study of
  dunwoody's construction and sageev's theorem.
\newblock {\em dissertation, arXiv:1607.07747}, 1998.

\bibitem[Sag95]{MR1347406}
M.~Sageev.
\newblock Ends of group pairs and non-positively curved cube complexes.
\newblock {\em Proc. London Math. Soc. (3)}, 71(3):585--617, 1995.

\bibitem[STW15]{MR3438162}
M.~Stein, J.~Taback, and P.~Wong.
\newblock Automorphisms of higher rank lamplighter groups.
\newblock {\em Internat. J. Algebra Comput.}, 25(8):1275--1299, 2015.

\end{thebibliography}

\Address

\end{document}